\newlist{enumD}{enumerate}{1}
\setlist[enumD]{label=(Step\arabic*)}
\providecommand{\tabularnewline}{\\}
\def\l@subsection{\@tocline{2}{0pt}{4pc}{6pc}{}}
\def\l@subsubsection{\@tocline{3}{0pt}{8pc}{8pc}{}}
\DeclareRobustCommand*{\mfaktor}[3][]
{
	{ \mathpalette{\mfaktor@impl@}{{#1}{#2}{#3}} }
}
\newcommand*{\mfaktor@impl@}[2]{\mfaktor@impl#1#2}
\newcommand*{\mfaktor@impl}[4]{
	\settoheight{\faktor@zaehlerhoehe}{\ensuremath{#1#2{#3}}}%
	\settoheight{\faktor@nennerhoehe}{\ensuremath{#1#2{#4}}}%
	\raisebox{-0.5\faktor@zaehlerhoehe}{\ensuremath{#1#2{#3}}}%
	\mkern-4mu\diagdown\mkern-5mu%
	\raisebox{0.5\faktor@nennerhoehe}{\ensuremath{#1#2{#4}}}%
}
\numberwithin{equation}{section}
\numberwithin{figure}{section}
\theoremstyle{plain}
\newtheorem{thm}{\protect\theoremname}
\theoremstyle{definition}
\newtheorem{defn}[thm]{\protect\definitionname}
\theoremstyle{plain}
\newtheorem{prop}[thm]{\protect\propositionname}
\theoremstyle{remark}
\newtheorem{rem}[thm]{\protect\remarkname}
\theoremstyle{definition}
\theoremstyle{plain}
\newtheorem{lem}[thm]{\protect\lemmaname}
\theoremstyle{plain}
\newtheorem{cor}[thm]{\protect\corollaryname}
\theoremstyle{definition}
\newtheorem{notation}[thm]{\protect\notationname}
\newtheorem{claim}{\protect\claimname}
\theoremstyle{definition}
\providecommand{\corollaryname}{Corollary}
\providecommand{\definitionname}{Definition}
\providecommand{\examplename}{Example}
\providecommand{\lemmaname}{Lemma}
\providecommand{\propositionname}{Proposition}
\providecommand{\remarkname}{Remark}
\providecommand{\theoremname}{Theorem}
\providecommand{\notationname}{Notation}
\providecommand{\claimname}{Claim}
\tikzstyle{sq}=[draw, fill, rectangle, minimum size=3pt, inner sep=0pt]		
\tikzstyle{colie}   = [circle,thin, minimum width=4pt, draw, inner sep=0pt, path picture={\draw (path picture bounding box.south east) -- (path picture bounding box.north west) (path picture bounding box.south west) -- (path picture bounding box.north east);}]
\tikzstyle{coliel}   = [circle,thin, minimum width=6pt, draw, inner sep=0pt, path picture={\draw (path picture bounding box.south east) -- (path picture bounding box.north west) (path picture bounding box.south west) -- (path picture bounding box.north east);}]
\tikzstyle{lie}=[draw,thin,fill, circle, minimum size=4pt, inner sep=0pt]	
\tikzstyle{prelie}=[draw,thin, fill=blue!50, circle, minimum size=4pt, inner sep=0pt]	
\tikzstyle{coprod}=[draw,thin, circle, minimum size=4pt, inner sep=0pt]	
\tikzstyle{prod}=[draw,thin, fill=black, circle, minimum size=4pt, inner sep=0pt]	
\tikzstyle{circblack}=[draw,thin,fill, circle, minimum size=4pt, inner sep=0pt]
\newcommand{\alex}[1]{\texttt{\textcolor{blue}{#1}}}
\newcommand{\ie}{\emph{i.e.} }
\newcommand{\ot}{\otimes}
\newcommand{\cat}[1]{\mathbf{#1}}
\newcommand{\ca}[1]{\mathcal{#1}}
\def\co{\colon\thinspace}
\newcommand{\GL}{\operatorname{GL}}
\newcommand{\id}{\operatorname{id}}
\def\R{{\mathbb R}}
\def\N{{\mathbb N}}
\def\Hom{\operatorname{Hom}}
\def\Der{\operatorname{Der}}
\def\End{\operatorname{End}}
\def\Aut{\operatorname{Aut}}
\newcommand{\E}{\mathsf{E}}
\newcommand{\idealR}{\mathsf{R}}
\newcommand{\e}{\mathbf{e}}
\newcommand{\pd}{\partial}
\newcommand{\llb}{\llbracket}
\newcommand{\rrb}{\rrbracket}
\newcommand{\Diop}{\mathsf{P}}
\newcommand{\sgn}{\text{sgn}}
\newcommand{\cobd}[1]{ \mathbf{\Omega}(\mathsf{#1}) }
\newcommand{\degd}{\lambda}
\newcommand{\bibl}{\mathsf{BIB}^{\degd}}
\newcommand{\bib}{\mathsf{BIB}}
\newcommand{\vspan}{\operatorname{span}}
\newcommand{\antishrieck}{\textnormal{\textexclamdown}}
\def\redd{\textnormal{\texttt{red}}}
\begin{document}

		\title[BIB-gebras, double Poisson gebras and pre-Calabi-Yau algebras]
	{Balanced infinitesimal bialgebras, double Poisson gebras and pre-Calabi-Yau algebras}

	\author{Alexandre Quesney}
	\address{
		Universidad Politécnica de Madrid (UPM)
		Campus de Montegancedo / Avenida de Montepríncipe s.n.
		28660 Boadilla del Monte, Madrid,
		Spain
	}
	\email{alexandre.quesney@upm.es}

	\keywords{pre-Calabi-Yau algebra, infinitesimal bialgebra, double Poisson gebra, Lie bialgebra}
	\subjclass[2020]{Primary: 18M85; Secondary: 14A22, 17Bxx}
	
	\date{\today}
	
	\begin{abstract}
		We consider the properad that governs the balanced infinitesimal bialgebras equipped with a coproduct of degree $1-d$. 
		This properad naturally encodes a part of the structure of the pre-Calabi-Yau algebras of degree $d$.  
		We compute the cobar construction of its Koszul dual coproperad and show that its gebras lie between  the homotopy double Poisson gebras and the pre-Calabi-Yau algebras. 
	Finally, we show that, if one is willing to consider their curved version, the two resulting notions of curved homotopy balanced infinitesimal bialgebra and curved homotopy double Poisson gebra are equivalent. 
	A relationship with the homotopy odd Lie bialgebras is also discussed. 
	\end{abstract}
	
	\maketitle
	\tableofcontents

	\section*{Introduction}
The pre-Calabi-Yau algebras are a type of structures that generalizes the Calabi-Yau structures on $A_\infty$--algebras that are not necessarily compact or smooth.  
They appeared in the literature under different names, for instance as $V_\infty$--algebras (introduced by T. Tradler and M. Zeinalian in \cite{Tradelr-Zenalian-string}) or $A_\infty$-algebras with boundary (introduced by P. Seidel in \cite{SeidelI}); the term pre-Calabi-Yau algebras was coined by M. Kontsevich and Y. Vlassopoulos (see \cite{KV-talk} and \cite{KTV}).  
Motivations for the introduction and study of these structures lie in symplectic geometry,  in non-commutative geometry and also in string topology. 
They may be seen as non-commutative Poisson structures in derived non-commutative geometry: W. Yeung showed in \cite{Yeung-preCYmod} 
that any pre-Calabi-Yau structure induce a shifted Poisson structure on the associated derived moduli stack of representations. 
In fact, the pre-Calabi-Yau structures generalize several non-commutative Poisson structures, like 
M. Van den Bergh's double Poisson gebras (see the work of N. Iyudu, M. Kontsevich and Y. Vlassopoulos \cite{IK,KYV}), and double quasi-Poisson gebras  (see the work of D. Fernández and E. Herscovich \cite{Fernandez-Herscovich-qp}). 
From another perspective, one may observe that the notion of a pre-Calabi-Yau algebra is homotopical by nature. 
This point of view has been recently developed by J. Leray and B. Vallette in \cite{Leray-Vallette}, where the homotopy theory of both the double Poisson gebras and the pre-Calabi-Yau algebras has been studied.

In the present note, following the properadic approach of \cite{Leray-Vallette}, we provide another description of these structures; we show that they are closely related to the \emph{balanced infinitesimal bialgebras}, thereby highlighting the relevance of the latter in non-commutative geometry.  
\\

The infinitesimal bialgebras (also called $\epsilon$--bialgebras), introduced by S. Joni and G.-C. Rota in \cite{joni1979coalgebras} and further studied by M. Aguiar in \cite{aguiar2000infinitesimal,aguiar2004infinitesimal}, are vector spaces equipped with a product and a coproduct that are both associative and that satisfy a Leibniz compatibility.  
Such gebras turned out to be useful, for instance, in the study of $\mathbf{cd}$--index of polytopes; see \cite{A2,E-R}. 
The \emph{balanced} $\epsilon$--bialgebras were introduced in \cite{aguiar2001associative}.
They are $\epsilon$--bialgebras that satisfy the so-called balanced condition (see \eqref{eq: graph relation balanceator}), which ensures that the structure obtained by anti-symmetrizing both the product and the coproduct is a that of a Lie bialgebra. 
Further aspects of the theory of these gebras has been developed in \emph{loc. cit.}, like a relation with the associative Yang-Baxter equation and the construction of an analogue of Drinfeld's double of a Lie bialgebra, revealing an interesting connection between the balanced $\epsilon$--bialgebras and the Lie bialgebras.

The balanced infinitesimal bialgebras with a coproduct of degree $1-d$ are governed by a dioperad, which we denote by $\bib^{1-d}$. 
They are naturally related to two other gebras: 
$V$--gebras and double Poisson gebras. 
The first ones were considered by T. Tradler and M. Zeinalian in \cite{Tradelr-Zenalian-string} in the context of string topology; they are associative algebras equipped with a symmetric and invariant $2$--tensor of degree $-d$. 
Such a structure can be encoded by a dioperad, which we denote by $\mathsf{V}^{-d}$. 
The latter was shown to be Koszul by K. Poirier and T. Tradler  in \cite{PT19}. 
Note that the minimal dioperadic resolution of $\mathsf{V}^{-d}$ leads to the notion of $\mathsf{V}_\infty$--gebras, or pre-Calabi-Yau algebras, of degree $d$. 
The second type of gebras, the double Poisson gebras, was introduced by M. Van den Bergh in \cite{VdB-Poisson} as non-commutative Poisson structures; 
they are associative algebras equipped with an anti-symmetric double bracket of degree $2-d$ that satisfies  compatibility relations (double Jacobi and double Leibniz relations). 
Such a structure can be encoded by a dioperad, which we denote by $\mathsf{DPois}^{2-d}$. 
The latter was shown to be Koszul by J. Leray in \cite{Leray-II}; its minimal resolution has been computed by J. Leray and B. Vallette in \cite{Leray-Vallette}. 

In this paper, we consider $d=2$ for simplicity and we drop the upper indices in the notations. 
The three dioperads  $\mathsf{V}$, $\mathsf{BIB}$ and  $\mathsf{DPois}$ are multiplicative and admit a quadratic presentation with two generators. 
One generator is the image of the generator of $\mathsf{Ass}$ (the operad that governs the associative algebras) and the other one is $c\in \mathsf{V}(2,0)$, $\delta\in \mathsf{BIB}(2,1)$ and $\mathfrak{b}\in \mathsf{DPois}(2,2)$, which, respectively, encode symmetric $2$--tensors of degree $-2$,  coproducts of degree $-1$ and double brackets of degree $0$. 	
These multiplicative dioperads are naturally related by the following morphisms 
\begin{equation}\label{eq1}
	\mathsf{V} \to \mathsf{BIB} \to 	\mathsf{DPois}, 
\end{equation}
where the left-hand side one sends $c$ to zero, and the right-hand side one sends $\delta$ to zero. 
This is, by fact, a rather trivial relationship. 
However, when considering the homotopy version of these gebras, one obtains a richer theory. 
For reasons that will be clear later, we move from the framework of dioperads to the one of properads. 
This is not mandatory but it turns out to be enlightening.   
So, from now on, $\mathsf{V}$, $\mathsf{BIB}$ and  $\mathsf{DPois}$ denote the properads that govern the aforementioned gebras.

A pre-Calabi-Yau algebra (or pre-CY algebra) of degree $d=2$ on a vector space $A$ is a collection of degree 
$-2 + (n_1+\cdots+n_k)$ maps 
\begin{equation*}
	m_{(k)}^{n_1,\dots,n_k}\co A^{\ot n_1}\ot \cdots \ot A^{\ot n_k}\to A^{\ot k} 
\end{equation*}
indexed by $n_i\geq 0$ if $k\geq 2$ and $n_1\geq 2$ if $k=1$, 
that satisfy certain compatibility relations.  
Such a structure can be encoded by a properad (see \cite{KTV}), which we denote by $\mathsf{pCY}$. 
On the one side, it has be shown in \cite{PT19}  (see also \cite{Leray-Vallette}) that $\mathsf{pCY}$ is isomorphic to the cobar construction of the genus zero part of the Koszul dual coproperad of $\mathsf{V}$. 
On the other side, pre-CY algebras have been compared to the homotopy double Poisson gebras by J. Leray and B. Vallette, in \cite{Leray-Vallette}. 
A closer look at the minimal resolution $\mathsf{DPois}_{\infty}:=\cobd{DPois^{\text{\textexclamdown}}}$ reveals that the operations encoded by this properad are also of the form 
$m_{(k)}^{n_1,\dots,n_k}\co A^{\ot n_1}\ot \cdots \ot A^{\ot n_k}\to A^{\ot k}$ but with the stronger restriction that $n_i\geq 1$ for all $i$. 
In fact,  they obtain a surjection of properads 
\begin{equation*}%
	\mathsf{pCY} \twoheadrightarrow  \mathsf{DPois}_{\infty}.
\end{equation*}

In this paper, we study the cobar construction of the Koszul dual coproperad of $\bib$, denoted by $\cobd{\bib^{\text{\textexclamdown}}}$. 
Due to the balanced condition, the properad $\bib$ and its Koszul dual properad $\bib^!$ are delicate to handle. 
Our main technical result is a combinatorial description of $\bib^!$. 
It allows us to compute the properad $\cobd{\bib^{\text{\textexclamdown}}}$ and to show that the resulting  gebras are a particular type of pre-CY algebras. 
In fact, the only missing operations are those of the form $m_{(k)}^{0,\dots,0}\co \Bbbk \to A^{\ot k}$. 
More precisely, we obtain the following result. 
\begin{thm}\label{th1}[Propositions  \ref{cor: surj cob bib dp} and \ref{cor: surj pCY cob bib}]
	There are surjective morphisms of properads 
	\begin{equation*}
		\mathsf{pCY}  \twoheadrightarrow \cobd{\bib^{\textnormal{\textexclamdown}}} \twoheadrightarrow  \mathsf{DPois}_{\infty}. 
	\end{equation*}
\end{thm}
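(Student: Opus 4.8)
The plan is to construct the two surjections separately, each by exhibiting an explicit morphism of properads and then checking it is well defined on generators and relations, and surjective. The conceptual skeleton is that all three properads $\mathsf{pCY}$, $\cobd{\bib^{\antishrieck}}$ and $\mathsf{DPois}_\infty$ are cobar constructions of Koszul dual coproperads (or retracts thereof), $\mathsf{pCY}\cong\cobd{\mathsf{V}^{\antishrieck}_{(0)}}$ by \cite{PT19}, $\mathsf{DPois}_\infty=\cobd{\mathsf{DPois}^{\antishrieck}}$, and $\cobd{\bib^{\antishrieck}}$ is the object we control via the combinatorial description of $\bib^!$ announced as the main technical result. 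Since the cobar construction $\cobd{-}$ is a functor on coaugmented coproperads, it suffices to produce morphisms of coproperads in the other direction, namely $\mathsf{DPois}^{\antishrieck}\hookrightarrow\bib^{\antishrieck}$ and $\bib^{\antishrieck}\hookrightarrow\mathsf{V}^{\antishrieck}_{(0)}$ (dualizing the quadratic morphisms of \eqref{eq1}), and then show that each induces a \emph{surjection} after applying $\cobd{-}$.

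First I would set up the Koszul duals. Starting from the quadratic presentations with two generators described in the excerpt — $\mathsf{V}$ generated by $\mu$ (image of the $\mathsf{Ass}$ generator) and $c\in\mathsf{V}(2,0)$, $\bib$ by $\mu$ and $\delta\in\bib(2,1)$, $\mathsf{DPois}$ by $\mu$ and $\mathfrak b\in\mathsf{DPois}(2,2)$ — I would identify the generating $\mathbb S$-bimodules of the Koszul dual coproperads: the quadratic part is spanned, in each case, by the desuspended duals of the generators, so the cogenerators of $\mathsf{V}^{\antishrieck}$, $\bib^{\antishrieck}$, $\mathsf{DPois}^{\antishrieck}$ sit in arities $(2,0)$, $(2,1)$, $(2,2)$ respectively (plus the $\mathsf{Ass}^{\antishrieck}$ part from $\mu$, which is common to all three). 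The morphisms \eqref{eq1} send $c\mapsto 0$ and $\delta\mapsto 0$, so dually the inclusions $\mathsf{DPois}^{\antishrieck}\hookrightarrow\bib^{\antishrieck}\hookrightarrow\mathsf{V}^{\antishrieck}$ are the evident inclusions on cogenerators: $\mathsf{DPois}^{\antishrieck}$ omits the $(2,2)$-cogenerator of... wait — I mean one has a chain of coproperads whose cogenerators are nested, so that the induced quasi-cofree coproperads include into each other. The point is that the combinatorial description of $\bib^!$ tells us precisely which iterated compositions of the cogenerators survive, and comparing with the genus-zero part of $\mathsf{V}^{\antishrieck}$ (whose cobar is $\mathsf{pCY}$) and with $\mathsf{DPois}^{\antishrieck}$ one reads off that $\bib^{\antishrieck}$ sits strictly between them as a sub-coproperad of one and a quotient-target of the other.

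Next I would translate these coproperad maps into properad maps after cobar. Applying $\cobd{-}$ to an injection of coaugmented coproperads $\mathsf{C}\hookrightarrow\mathsf{D}$ gives a morphism $\cobd{\mathsf{C}}\to\cobd{\mathsf{D}}$ of quasi-free properads which on generators is the inclusion; to get \emph{surjectivity} of $\mathsf{pCY}\twoheadrightarrow\cobd{\bib^{\antishrieck}}$ I need the map to go the other way, so in fact I want $\bib^{\antishrieck}$ to be a \emph{sub}coproperad of $\mathsf{V}^{\antishrieck}_{(0)}$ so that the quotient coproperad $\mathsf{V}^{\antishrieck}_{(0)}\twoheadrightarrow(\text{something})$ is not quite it — more cleanly, I would use that $\cobd{-}$ sends surjections of coproperads to surjections of properads (a surjection of cofree-ish coproperads induces a surjection on cobar because it is surjective on cogenerators, hence on the free properad on them, and it is compatible with the differential). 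So the real task reduces to: (i) show $\mathsf{V}^{\antishrieck}_{(0)}\twoheadrightarrow\bib^{\antishrieck}$ is a surjection of coproperads — this is where the combinatorial description of $\bib^!$ (dually $\bib^{\antishrieck}$) does the work, identifying $\bib^{\antishrieck}$ as the quotient of $\mathsf{V}^{\antishrieck}_{(0)}$ by the ideal generated by the $m^{0,\dots,0}_{(k)}$-type cogenerators, i.e. the arity-$(k,0)$ parts; and (ii) show $\bib^{\antishrieck}\twoheadrightarrow\mathsf{DPois}^{\antishrieck}$ is a surjection of coproperads, which is the further quotient killing everything with some $n_i=0$. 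Then apply $\cobd{-}$ and read off Theorem~\ref{th1} as the composite, matching the identifications $\cobd{\mathsf{V}^{\antishrieck}_{(0)}}\cong\mathsf{pCY}$ and $\cobd{\mathsf{DPois}^{\antishrieck}}=\mathsf{DPois}_\infty$.

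The main obstacle is step (i): controlling $\bib^{\antishrieck}$ well enough to see it as a clean quotient of $\mathsf{V}^{\antishrieck}_{(0)}$. The balanced condition makes $\bib$ and $\bib^!$ genuinely subtle — the quadratic relations are not of a "monomial" or obviously distributive-law type — so verifying that the Koszul dual coproperad has exactly the cogenerators $m^{n_1,\dots,n_k}_{(k)}$ with the $n_i$ allowed to vanish except not all simultaneously, and that the cobar differential matches the pre-CY relations up to the deleted pieces, requires the explicit combinatorial model of $\bib^!$ promised earlier in the paper. Once that model is in hand, (ii) is comparatively formal (it is the analogue of the Leray–Vallette surjection $\mathsf{pCY}\twoheadrightarrow\mathsf{DPois}_\infty$, just factored through the intermediate object), and assembling the two surjections into the displayed composite is immediate. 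I would also double-check that the composite $\mathsf{pCY}\twoheadrightarrow\mathsf{DPois}_\infty$ obtained here agrees with the known one of \cite{Leray-Vallette}, as a consistency check and because Propositions~\ref{cor: surj cob bib dp} and \ref{cor: surj pCY cob bib} are presumably proved in that compatible way.
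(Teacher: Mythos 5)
Your proposal follows essentially the same route as the paper: both displayed surjections are obtained by dualizing inclusions of Koszul dual properads, namely $\mathsf{DPois}^!\subset \bib^!\subset \mathsf{uBIB}^!/\mathsf{E_{uBIB^!}}(1,0)_{1}$ inside $\mathsf{uDPois}^!\cong\mathsf{uBIB}^!$ (the content of Theorem~\ref{prop: iota inclusion}), to surjections of (partial) coproperads, and then applying the cobar construction, with $\mathsf{pCY}$ realized as $\cobd{\left(\mathsf{uBIB}^!/\mathsf{E_{uBIB^!}}(1,0)_{1}\right)^*}$. The only caution is the one you caught yourself mid-argument: the surjections are not induced by dualizing the quadratic morphisms \eqref{eq1}, but by the non-quadratic combinatorial embedding $\iota\co\bib^!\hookrightarrow\mathsf{uBIB}^!$, exactly as in Propositions~\ref{cor: surj cob bib dp} and~\ref{cor: surj pCY cob bib}.
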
	
This is in stark contrast with the trivial morphisms in \eqref{eq1}. 
It is the manifestation that the Koszul dual (co)properads of $\mathsf{V}$, $\bib$ and $\mathsf{DPois}$ are related by non-trivial morphisms which are not morphisms of quadratic data. 

On the other side, the extra work of dealing with properads rather than with dioperads brings us the following extra piece of information:  
the decomposition maps of the Koszul dual coproperad $\bib^{\textnormal{\textexclamdown}}$ produce only elements of genus zero, that is, $\bib^{\textnormal{\textexclamdown}}$ a codioperad. 
This is a special property of $\bib$, which is shared by the properad $\mathsf{DPois}$ but not by the properad $\mathsf{V}$ (see \cite{Leray-Vallette}). 
It tells us that the differential of the properadic  cobar construction $\cobd{\bib^{\textnormal{\textexclamdown}}}$ splits any element of $\bib^{\textnormal{\textexclamdown}}$ into (a sum of) two elements related by a sole edge.

Theorem \ref{th1} also tells us that $\cobd{\bib^{\text{\textexclamdown}}}$--gebras are closer to pre-CY algebras than $\mathsf{DPois}_{\infty}$--gebras are. 	
However, the authors of \cite{Leray-Vallette} showed that the lack of operations encoded by $\mathsf{DPois}_{\infty}$, when compared to pre-CY-algebras, can be compensated by considering the \textit{curved} homotopy double Poisson gebras. 
This naturally led to consider a curved version of pre-CY algebras, so that one has an equivalence between curved homotopy double Poisson gebras and curved pre-CY algebras; see Corollary 2.43 of \textit{loc. cit.}

We show that a similar result holds true for $\bib$--gebras. 
One way to introduce the curved homotopy double Poisson gebras and the curved homotopy balanced $\epsilon$--bialgebras is by adding a unit to their Koszul dual properads $\mathsf{DPois}^!$ and $\bib^!$. 
In fact, we simply show that these two resulting properads are isomorphic, leading to the same deformation theory. 
\\

Finally, let us present a complementary point of view on our Theorem \ref{th1}.  
Recall that balanced infinitesimal bialgebras may be seen associative analogues of the Lie bialgebras. 
In properadic terms, one has the following commutative diagram. 
\begin{equation*}
	\begin{tikzpicture}
		[>=stealth,thick,draw=black!65, arrow/.style={->,shorten >=1pt}, point/.style={coordinate}, pointille/.style={draw=red, top color=white, bottom color=red},scale=0.7, photon/.style={decorate,decoration={snake,post length=1mm}},baseline=0pt]
		\matrix[row sep=5mm,column sep=12mm,ampersand replacement=\&]
		{
			\node (-10) {$\mathsf{Lie}$};			\& \node (-11) {$\mathsf{Ass}$} 	;\\
			\node (00) {$\mathsf{LieB^1}$};						\& \node (01)  {$\bib^1$} 	;\\
		}; 
		\path
		(-10)     edge[above,->]      		node {} 		(-11)
		(-10)     edge[below,->]      		node {}	  		(00)
		(-11)     edge[right,->]      		node {}	  		(01)
		(00)     edge[right,->]      		node {}	  		(01)
		;
	\end{tikzpicture}
\end{equation*}
Here we changed the degrees: $\mathsf{LieB^1}$ denotes the properad that governs the Lie bialgebras with a Lie cobracket of degree 1, and $\bib^1$ denotes the properad that governs the balanced infinitesimal bialgebra with a coproduct of degree +1. 
Recall that the top morphism $\mathsf{Lie}\to \mathsf{Ass}$ is induced by the morphism of quadratic data that sends the generator that encodes Lie brackets to the anti-symmetrization of the generator that encodes associative products. 
The bottom morphism is also induced by this latter morphism, both at the level of the generator that encodes Lie brackets and, dually, at the level of the generator that encodes Lie cobrackets. 
Since morphisms of quadratic data induce morphisms between the associated Koszul dual coproperads, by functoriality of the cobar construction one obtains the following diagram of properads. 
\begin{equation*}
	\begin{tikzpicture}
		[>=stealth,thick,draw=black!65, arrow/.style={->,shorten >=1pt}, point/.style={coordinate}, pointille/.style={draw=red, top color=white, bottom color=red},scale=0.7, photon/.style={decorate,decoration={snake,post length=1mm}},baseline=0pt]
		\matrix[row sep=5mm,column sep=12mm,ampersand replacement=\&]
		{
			\node (-10) {$\mathsf{Lie}_\infty$};			\& \node (-11) {$\mathsf{Ass}_\infty$} 	;\\
			\node (00) {$\mathsf{LieB}^{1}_{\infty}$};		\& \node (01)  {$\cobd{(\bib^1)^{\text{\textexclamdown}}}$} 	;\\
		}; 
		\path
		(-10)     edge[above,->]      		node {} 		(-11)
		(-10)     edge[below,->]      		node {}	  		(00)
		(-11)     edge[right,->]      		node {}	  		(01)
		(00)     edge[right,->]      		node {}	  		(01)
		;
	\end{tikzpicture}
\end{equation*}
The horizontal morphisms are therefore the generalization of the anti-symmetrization morphism; we describe the bottom morphism in Corollary \ref{cor: Liebi to bib KD}. 

The latter diagram has the following geometric interpretation. 
On the one hand, from the celebrated work of M. Kontsevich  \cite{Kontsevich-Deformation}, we know that $L_\infty$--algebras correspond to  pointed formal dg-manifolds equipped with a homological vector field.  
In the same spirit, S. Merkulov showed in \cite{Merkulov2004} that the homotopy category of  $\mathsf{LieB}^1$--gebras  is equivalent to the derived category of the Poisson pointed formal dg-manifolds; the objects of this category are pointed formal dg-manifolds equipped with some integrable polyvector fields. 
Note that these polyvector fields have components of the form $L^n_m\co \Lambda^n V \to \operatorname{Sym}^m V$ restricted to $m,n\geq 1$. 
\\
On the other hand, following Kontsevich-Soibelman  \cite{KS06}, an $A_\infty$--algebra should be thought of as a non-commutative pointed formal dg-manifold with an integrable vector field.  
More generally, a pre-CY algebra may be thought of as a non-commutative pointed formal dg-manifold with an integrable polyvector field; see \cite{KTV}. 
From Theorem \ref{th1}, a restriction on the components of such a polyvector field leads to an $\cobd{(\bib^1)^{\text{\textexclamdown}}}$--gebra. 
This provides a non-commutative analogue of Merkulov's homotopy Poisson pointed formal dg-manifolds. 
Details and more will be given in a subsequent paper.

	\subsection*{Organization of the paper and technical results} 
	In Section \ref{sec: conv}, we set up our conventions on properads. 
	\\
	In Section \ref{sec: BIB}, we define the properad $\bibl$ that governs the balanced infinitesimal bialgebras with a coproduct of degree $\degd$, and we give the quadratic presentation of its Koszul dual properad $(\bibl)^!$. 
	We postpone the study of the latter to Section \ref{sec: cobar}. 
	\\
	From Section \ref{sec: curved}, we focus on $\lambda=-1$ for simplicity and set $\bib:=\bib^{-1}$. 
	In this section, we show that curved homotopy balanced infinitesimal bialgebras and curved homotopy double Poisson gebras are equivalent notions.  
	To do so, we consider the unital extensions of $\bib$ and of $\mathsf{DPois}$, and we show that  they are isomorphic. \\
	In Section \ref{sec: cobar},  we make explicit the cobar construction $\cobd{\bib^{\text{\textexclamdown}}}$ and the relation with $\mathsf{DPois}_{\infty}$. 
	To do so, we provide a combinatorial description of $\bib^!$ by embedding it into the unital extension of  $\mathsf{DPois}^!$, which is more convenient to handle. 
	This is the main technical result of this paper, stated as Theorem  \ref{prop: iota inclusion}. 
	\\
	In Section \ref{sec: pCY}, we relate  $\cobd{\bib^{\text{\textexclamdown}}}$ with the properad $\mathsf{pCY}$. 
	\\
	In Section \ref{sec: LieB}, we relate $\cobd{\bib^{\text{\textexclamdown}}}$ with the properad $\mathsf{LieB}^{-1}_\infty$.

	\subsection*{Acknowledgments} 
	
	The author is indebted to B. Vallette both for his corrections to an earlier version of this article and for sharing many enlightening comments. 
	The author also wishes to thank M. Livernet for helpful discussions.  
	Part of this work has benefited financial support from FEDER Andalucía 2014-2020, proyecto UMA18-FEDERJA-183.

	\section{Preliminaries}\label{sec: conv}
	
	\subsection{General conventions}
	The symmetric group on $n$ letters is denoted by $\Sigma_n$; a permutation $\sigma \in \Sigma_n$ is determined by the tuple $(\sigma(1),\sigma(2),\dots,\sigma(n))$. 
	The sub-group of $\Sigma_n$ generated by the permutation $(2,3,\dots,n,1)$ is denoted by $\Sigma^{\text{cyc}}_n$.

	All throughout this paper, the ground field $\Bbbk$ is of characteristic zero. 
	For two differential graded modules $M$ and $N$, we let $M\ot N$ denote their tensor product over $\Bbbk$. 
	The suspension functor is denoted by $s$; to a graded module $M$, it associates the graded module $sM$ (also denoted $M[-1]$) such that $(sM)_n= M_{n-1}=M[-1]_n$, for $n\in \mathbb{Z}$. 
	Similarly, the desuspension functor is denoted by $s^{-1}$. 
	We use the Koszul sign rule and the Koszul sign convention.

	\subsection{Properads}

	Our convention on properads follows \cite{Vallette2007,HLV20} to which we refer for more details. 
	In this section, we briefly recall these notions; our presentation closely follows \cite{Leray-Vallette}. 
	
	By a (left reduced) $\Sigma$--bimodule $\E$ we mean a collection of $(\Sigma_m,\Sigma_n)$--(dg-)bimodules $\E(m,n)$ indexed by $m,n\geq 0$ such that $\E(0,n)=0$ for any $n\in \mathbb{N}$.  
	
	We let $\mathsf{I}$ be the $\Sigma$--bimodule given by $\mathsf{I}(1,1)=\Bbbk$ and $\mathsf{I}(m,n)=0$ otherwise. 
We let $G$ be the set of connected directed graphs. 

For a graph $g\in G$, we let $g(\E)$ be the module obtained by decorating each vertex of $g$ by an element of $\E$ of corresponding arity. 
Explicitly, 
\begin{equation*}
	g(\E) = \bigotimes_{v \in V(g)} \E(out(v),in(v)) ,
\end{equation*}
where $V(g)$ is the set of internal vertices of $g$; and $in(v)$ and $out(v)$ stand, respectively,  for the number of inputs of $v$ and the number of outputs of $v$. 
For the trivial graph $g=|$, one has $g(\E):=\mathsf{I}$. 

Let $\mathcal{G}$ be the endofunctor of the category of $\Sigma$--bimodules given by

\begin{equation}\label{eq: monad G}
	\mathcal{G}(\E) = \bigoplus_{g\in G} g(\E).  
\end{equation}

It has a structure of monad. The morphism $\mathcal{G}\circ \mathcal{G}\to \mathcal{G}$ is induced by the morphism $\mathcal{G}(\mathcal{G}(\E))\to \mathcal{G}(\E)$ that  forgets the nesting of connected graphs in $\mathcal{G}(\mathcal{G}(\E))$. 
The unit $id\to \mathcal{G}$ is given by the canonical inclusion $\E\to \mathcal{G}(\E)$. 
\begin{defn}
	A properad is an algebra over the monad $\mathcal{G}$. 
\end{defn}

A properad can be equivalently described as a monoid in the monoidal category $(\Sigma\cat{-mod},\boxtimes,\mathsf{I})$ of $\Sigma$--bimodules equipped with the \textit{connected composition product} $\boxtimes$ defined in \cite{Vallette2007}. 
The dual notion of a coproperad, however, has a few important subtleties. 
Here, we refer as a \textit{coproperad} to a comonoid in $(\Sigma\cat{-mod},\boxtimes,\mathsf{I})$. 

Recall that a \textit{coaugmented} coproperad $\mathsf{C}$ is a coproperad equipped with a morphism of coproperads $\eta\co \mathsf{I}\to \mathsf{C}$ such that its composition with the counit is the identity. 
To such a coaugmented coproperad, we let $\overline{\mathsf{C}}:=\text{coker}(\eta)$ be its coaugmentation coideal.

Dual to $\mathcal{G}$, one can consider the comonad 
\begin{equation*}
	\mathcal{G}^c(\E) = \bigoplus_{g\in \overline{G}} g(\E), 
\end{equation*}
where $\overline{G}:= G\setminus \{|\}$. 
Its comonadic structure is given by partitioning a graph into connected directed sub-graphs. 
Coalgebras over this comonad differ from coproperads in general. 
An important class of coproperads are the \textit{conilpotent} ones, which are those that arise as  coalgebras over $\mathcal{G}^c$ equipped with a coaugmented counit. 
For such a conilpotent coproperad, one may consider its infinitesimal coproduct
\begin{equation*}
	\Delta_{(1,1)}\co \overline{\mathsf{C}} \to \mathcal{G}^c(\overline{\mathsf{C}}) \to \mathcal{G}^c(\overline{\mathsf{C}})^{(2)},
\end{equation*}
where the second morphism is the projection on the summand $\mathcal{G}^c(\overline{\mathsf{C}})^{(2)}$ of $\mathcal{G}^c(\overline{\mathsf{C}})$ that is made up of graphs with two vertices.

Given a conilpotent (or more generally, a coaugmented) coproperad $\mathsf{C}$, one may construct its cobar construction $\cobd{\mathsf{C}}$: it is the quasi-free properad 
  $\cobd{\mathsf{C}} :=( \mathcal{G}(s^{-1}\overline{\mathsf{C}}), \partial )$, where $\partial$ is the sum of the unique derivation that extends the internal differential of $\mathsf{C}$ and of the unique derivation that extends the infinitesimal coproduct of $\mathsf{C}$.

For a quadratic properad $\mathsf{P}=\mathcal{G}(\E)/(\idealR)$ denote by $\mathsf{P}^{\text{\textexclamdown}}$ its \textit{Koszul dual coproperad}; it is a conilpotent coproperad. 
The \emph{Koszul dual properad} $\mathsf{P}^!$ is the linear dual of $\mathsf{P}^{\text{\textexclamdown}}$. 
Explicitly, if $\E$ is finite-dimensional, one has  
 \begin{equation*}
 	\mathsf{P}^! \cong \mathcal{G}(s^{-1}\E^*)/\left(s^{-2}\idealR^\perp \right). 
 \end{equation*}
Here we have followed the convention from  \cite[Lemma 1.30]{Leray-Vallette}.

	\section{The properad of balanced infinitesimal bialgebras}\label{sec: BIB}
	
	In this section, we introduce the properad $\bibl$ that governs the balanced infinitesimal bialgebras with a coproduct of degree $\degd$. 
	Unlike the properad that governs the infinitesimal bialgebras, the presence of the balanced relation in $\bibl$ makes the properad  $(\bibl)^!$ delicate to handle. 
	To remedy this difficulty, we will embed it (see Section \ref{sec: cobar}) into a more accessible properad, the unital extension of  $\mathsf{DPois}^!$.

	\subsection{The properad $\bibl$}
	
	For $\degd\in \mathbb{Z}$, we let $\bibl$ be the properad that governs the balanced infinitesimal bialgebras with a degree $\degd$ coproduct. 
	Explicitly,  it is the quadratic properad 
	\begin{equation*}
		\bibl = \mathcal{G}(\mathsf{E})/ (\mathsf{R})
	\end{equation*}
	where the only non trivial components of the graded $\Sigma$--bimodule $\mathsf{E}$ are 
	\begin{equation*}
		\mathsf{E}(1,2)=\Bbbk[\Sigma_2]\ot 1_1 = 
		\vspan\langle	
		\begin{tikzpicture}
			[scale=.35,auto=left,baseline=0.25cm]  
			\node (in1) at (1,0) {};
			\node (n2) at (1,1) [prod] {};
			\node[above,yshift=-0.1cm] (out1) at (0,2) {\scriptsize{$1$}};
			\node[above,yshift=-0.1cm] (out2) at (2,2) {\scriptsize{$2$}};
			
			\foreach \from/\to in {in1/n2,n2/out1,n2/out2}
			\draw[gray!20!black] (\from) -- (\to);
		\end{tikzpicture}
		,
		\begin{tikzpicture}
			[scale=.35,auto=left,baseline=0.25cm]  
			\node (in1) at (1,0) {};
			\node (n2) at (1,1) [prod] {};
			\node[above,yshift=-0.1cm] (out1) at (0,2) {\scriptsize{$2$}};
			\node[above,yshift=-0.1cm] (out2) at (2,2) {\scriptsize{$1$}};
			
			\foreach \from/\to in {in1/n2,n2/out1,n2/out2}
			\draw[gray!20!black] (\from) -- (\to);
		\end{tikzpicture}
		\rangle
		\text{ and } 
		\mathsf{E}(2,1)= 1_1\ot \Bbbk [\Sigma_2][-\degd] = 
		\vspan\langle 
		\begin{tikzpicture}
			[scale=.35,auto=left,baseline=0.25cm]  
			\node[below,yshift=0.1cm] (in1) at (0,0) {\scriptsize{$1$}};
			\node[below,yshift=0.1cm] (in2) at (2,0) {\scriptsize{$2$}};
			\node (n1) at (1,1) [coprod] {};
			\node (out1) at (1,2)  {};
			
			\foreach \from/\to in {in1/n1,in2/n1,n1/out1}
			\draw[gray!20!black] (\from) -- (\to);
		\end{tikzpicture}
		,
		\begin{tikzpicture}
			[scale=.35,auto=left,baseline=0.25cm]  
			\node[below,yshift=0.1cm] (in1) at (0,0) {\scriptsize{$2$}};
			\node[below,yshift=0.1cm] (in2) at (2,0) {\scriptsize{$1$}};
			\node (n1) at (1,1) [coprod] {};
			\node (out1) at (1,2)  {};
			
			\foreach \from/\to in {in1/n1,in2/n1,n1/out1}
			\draw[gray!20!black] (\from) -- (\to);
		\end{tikzpicture}
		\rangle.
	\end{equation*}
	Note that $\mathsf{E}(2,1)$ is concentrated in degree $\degd$. 
	The $\Sigma$--bimodule $\mathsf{R}$  is generated by the following four relations: 
	\begin{equation*}
		\begin{tikzpicture}
			[decoration={
				markings,
				mark=at position 0.6 with {\arrow{>}}},
			>=stealth,gray!20!black,,scale=.3,auto=left,baseline=-0.45cm]
			\begin{scope}[yscale=-1,xscale=1]  
				\node (in1) at (0,-0.5) {};
				\node (in2) at (2,-0.5) {};
				\node (in3) at (3,-0.5) {};
				\coordinate (itin1) at (3,1) {};
				\node (n2) at (1,1) [prod] {};
				\node (n1) at (2,2.5) [prod] {};;
				\coordinate (out2) at (2,3)  {};
				
				\foreach \from/\to in {in3/itin1,itin1/n1,in1/n2,n2/n1,in2/n2,out2/n1}
				\draw[gray!20!black] (\from) -- (\to);
			\end{scope}
		\end{tikzpicture}
		-
		\begin{tikzpicture}
			[decoration={
				markings,
				mark=at position 0.6 with {\arrow{>}}},
			>=stealth,gray!20!black,,scale=.3,auto=left,baseline=-0.45cm]  
			\begin{scope}[yscale=-1,xscale=1]
				\node (in1) at (0,-0.5) {};
				\node (in2) at (1,-0.5) {};
				\node (in3) at (3,-0.5) {};
				\coordinate (itin1) at (0,1) {};
				\node (n2) at (2,1) [prod] {};
				\node (n1) at (1,2.5) [prod] {};;
				\coordinate (out2) at (1,3)  {};
				
				\foreach \from/\to in {in1/itin1,itin1/n1,in2/n2,n2/n1,in3/n2,out2/n1}
				\draw[gray!20!black] (\from) -- (\to);
			\end{scope}
		\end{tikzpicture};
		\hspace{1.5cm}
		\begin{tikzpicture}
			[decoration={
				markings,
				mark=at position 0.6 with {\arrow{>}}},
			>=stealth,gray!20!black,,scale=.3,auto=left,baseline=0.45cm]  
			\node (in1) at (0,-0.5) {};
			\node (in2) at (2,-0.5) {};
			\node (in3) at (3,-0.5) {};
			\coordinate (itin1) at (3,1) {};
			\node (n2) at (1,1) [coprod] {};
			\node (n1) at (2,2.5) [coprod] {};;
			\coordinate (out2) at (2,3)  {};
			
			\foreach \from/\to in {in3/itin1,itin1/n1,in1/n2,n2/n1,in2/n2,out2/n1}
			\draw[gray!20!black] (\from) -- (\to);
		\end{tikzpicture}
		-(-1)^{\degd}
		\begin{tikzpicture}
			[decoration={
				markings,
				mark=at position 0.6 with {\arrow{>}}},
			>=stealth,gray!20!black,,scale=.3,auto=left,baseline=0.45cm]  
			\node (in1) at (0,-0.5) {};
			\node (in2) at (1,-0.5) {};
			\node (in3) at (3,-0.5) {};
			\coordinate (itin1) at (0,1) {};
			\node (n2) at (2,1) [coprod] {};
			\node (n1) at (1,2.5) [coprod] {};;
			\coordinate (out2) at (1,3)  {};
			
			\foreach \from/\to in {in1/itin1,itin1/n1,in2/n2,n2/n1,in3/n2,out2/n1}
			\draw[gray!20!black] (\from) -- (\to);
		\end{tikzpicture};
		\hspace{1.5cm}
		\begin{tikzpicture}
			[scale=.3,auto=left,baseline=0.45cm]  
			\node (in1) at (0,0) {};
			\node (in2) at (2,0) {};
			\node (n1) at (1,1.25) [coprod] {};;
			\node (n2) at (1,2.25) [prod] {};
			\node (out1) at (0,3.5)  {};
			\node (out2) at (2,3.5)  {};
			
			\foreach \from/\to in {in1/n1,in2/n1,n1/n2,n2/out1,n2/out2}
			\draw[gray!20!black] (\from) -- (\to);
		\end{tikzpicture}
		-
		\begin{tikzpicture}
			[scale=.3,auto=left,baseline=0.45cm]  
			\node (in1) at (1,0) {};
			\node (in2) at (3,0) {};
			\coordinate (itin2) at (3,1) {};
			\node (n1) at (1,1) [prod] {};
			\node (n2) at (2,2.5) [coprod] {};;
			\coordinate (itout1) at (0,2.5)  {};
			\node (out1) at (0,3.5)  {};
			\node (out2) at (2,3.5)  {};
			
			\foreach \from/\to in {in1/n1,n1/n2,n1/itout1,itout1/out1,n2/out2,in2/itin2,itin2/n2}
			\draw[gray!20!black] (\from) -- (\to);
		\end{tikzpicture}
		-
		\begin{tikzpicture}
			[scale=.3,auto=left,baseline=0.45cm]  
			\node (in1) at (0,0) {};
			\node (in2) at (2,0) {};
			\coordinate (itin1) at (0,1) {};
			\node (n2) at (2,1) [prod] {};
			\node (n1) at (1,2.5) [coprod] {};;
			\coordinate (itout2) at (3,2.5)  {};
			\node (out1) at (1,3.5)  {};
			\node (out2) at (3,3.5)  {};
			
			\foreach \from/\to in {in1/itin1,itin1/n1,n1/out1,in2/n2,n2/n1,n2/itout2,itout2/out2}
			\draw[gray!20!black] (\from) -- (\to);
		\end{tikzpicture}
	\end{equation*}
	\begin{equation}\label{eq: graph relation balanceator}
		(-1)^{\degd}
		\begin{tikzpicture}
			[scale=.3,auto=left,baseline=0.45cm]  
			\begin{scope}[yscale=1,xscale=-1]
				\node[below,yshift=0.1cm] (in1) at (1,0) {\scriptsize{$1$}};
				\node[below,yshift=0.1cm] (in2) at (2.5,0) {\scriptsize{$2$}};
				\coordinate (itin2) at (2.5,1) {};
				\node (n1) at (1,1) [prod] {};
				\node (n2) at (1,2.5) [coprod] {};;
				\coordinate (itout1) at (0.25,1.75)  {};
				\coordinate (itout2) at (2.5,2.5)  {};
				\node[above,yshift=-0.1cm] (out1) at (1,3.5)  {\scriptsize{$1$}};
				\node[above,yshift=-0.1cm] (out2) at (2.5,3.5)  {\scriptsize{$2$}};
				
				\foreach \from/\to in {in1/n1,n1/itout1,itout1/n2,n2/out1,in2/itin2,itin2/n2,n1/itout2,itout2/out2}
				\draw[gray!20!black] (\from) -- (\to);
			\end{scope}
		\end{tikzpicture}
		+
		\begin{tikzpicture}
			[scale=.3,auto=left,baseline=0.45cm]  
			\begin{scope}[yscale=1,xscale=-1]
				\node[below,yshift=0.1cm] (in1) at (1,0) {\scriptsize{$2$}};
				\node[below,yshift=0.1cm] (in2) at (2.5,0) {\scriptsize{$1$}};
				\coordinate (itin2) at (2.5,1) {};
				\node (n1) at (1,1) [prod] {};
				\node (n2) at (1,2.5) [coprod] {};;
				\coordinate (itout1) at (0.25,1.75)  {};
				\coordinate (itout2) at (2.5,2.5)  {};
				\node[above,yshift=-0.1cm] (out1) at (1,3.5)  {\scriptsize{$2$}};
				\node[above,yshift=-0.1cm] (out2) at (2.5,3.5)  {\scriptsize{$1$}};
				
				\foreach \from/\to in {in1/n1,n1/itout1,itout1/n2,n2/out1,in2/itin2,itin2/n2,n1/itout2,itout2/out2}
				\draw[gray!20!black] (\from) -- (\to);
			\end{scope}
		\end{tikzpicture}
		-
		\begin{tikzpicture}
			[scale=.3,auto=left,baseline=0.45cm]  
			\node[below,yshift=0.1cm] (in1) at (1,0) {\scriptsize{$1$}};
			\node[below,yshift=0.1cm] (in2) at (2.5,0) {\scriptsize{$2$}};
			\coordinate (itin2) at (2.5,1) {};
			\node (n1) at (1,1) [prod] {};
			\node (n2) at (1,2.5) [coprod] {};;
			\coordinate (itout1) at (0.25,1.75)  {};
			\coordinate (itout2) at (2.5,2.5)  {};
			\node[above,yshift=-0.1cm] (out1) at (1,3.5)  {\scriptsize{$2$}};
			\node[above,yshift=-0.1cm] (out2) at (2.5,3.5)  {\scriptsize{$1$}};
			
			\foreach \from/\to in {in1/n1,n1/itout1,itout1/n2,n2/out1,in2/itin2,itin2/n2,n1/itout2,itout2/out2}
			\draw[gray!20!black] (\from) -- (\to);
		\end{tikzpicture}
		-(-1)^\degd
		\begin{tikzpicture}
			[scale=.3,auto=left,baseline=0.45cm]  
			\node[below,yshift=0.1cm] (in1) at (1,0) {\scriptsize{$2$}};
			\node[below,yshift=0.1cm] (in2) at (2.5,0) {\scriptsize{$1$}};
			\coordinate (itin2) at (2.5,1) {};
			\node (n1) at (1,1) [prod] {};
			\node (n2) at (1,2.5) [coprod] {};;
			\coordinate (itout1) at (0.25,1.75)  {};
			\coordinate (itout2) at (2.5,2.5)  {};
			\node[above,yshift=-0.1cm] (out1) at (1,3.5)  {\scriptsize{$1$}};
			\node[above,yshift=-0.1cm] (out2) at (2.5,3.5)  {\scriptsize{$2$}};
			
			\foreach \from/\to in {in1/n1,n1/itout1,itout1/n2,n2/out1,in2/itin2,itin2/n2,n1/itout2,itout2/out2}
			\draw[gray!20!black] (\from) -- (\to);
		\end{tikzpicture}.
	\end{equation}

	\begin{rem}\label{rem: Liebi bib}
		The balanced infinitesimal bialgebras are the associative analogue of the Lie bialgebras; see \cite{aguiar2004infinitesimal}. 
		In our context, let 	$\mathsf{LieB}^\degd$ be the properad of degree $\degd$ Lie bialgebras. 
		It is generated by a bracket and a cobracket  
			\begin{equation*}
			\begin{tikzpicture}
				[scale=.35,auto=left,baseline=0.25cm]  
				\node (in1) at (1,0) {};
				\node (n2) at (1,1) [prod] {};
				\node[above,yshift=-0.1cm] (out1) at (0,2) {\scriptsize{$1$}};
				\node[above,yshift=-0.1cm] (out2) at (2,2) {\scriptsize{$2$}};
				
				\foreach \from/\to in {in1/n2,n2/out1,n2/out2}
				\draw[gray!20!black] (\from) -- (\to);
			\end{tikzpicture}
			= -
			\begin{tikzpicture}
				[scale=.35,auto=left,baseline=0.25cm]  
				\node (in1) at (1,0) {};
				\node (n2) at (1,1) [prod] {};
				\node[above,yshift=-0.1cm] (out1) at (0,2) {\scriptsize{$2$}};
				\node[above,yshift=-0.1cm] (out2) at (2,2) {\scriptsize{$1$}};
				
				\foreach \from/\to in {in1/n2,n2/out1,n2/out2}
				\draw[gray!20!black] (\from) -- (\to);
			\end{tikzpicture}
			\text{ and } 
			\begin{tikzpicture}
				[scale=.35,auto=left,baseline=.05cm]  
				\node[below,yshift=0.1cm] (in1) at (0,0) {\scriptsize{$1$}};
				\node[below,yshift=0.1cm] (in2) at (2,0) {\scriptsize{$2$}};
				\node (n1) at (1,1) [coprod] {};
				\node (out1) at (1,2)  {};
				
				\foreach \from/\to in {in1/n1,in2/n1,n1/out1}
				\draw[gray!20!black] (\from) -- (\to);
			\end{tikzpicture}
			= (-1)^{\degd+1} 
			\begin{tikzpicture}
				[scale=.35,auto=left,baseline=.05cm]  
				\node[below,yshift=0.1cm] (in1) at (0,0) {\scriptsize{$2$}};
				\node[below,yshift=0.1cm] (in2) at (2,0) {\scriptsize{$1$}};
				\node (n1) at (1,1) [coprod] {};
				\node (out1) at (1,2)  {};
				
				\foreach \from/\to in {in1/n1,in2/n1,n1/out1}
				\draw[gray!20!black] (\from) -- (\to);
			\end{tikzpicture}
		\end{equation*}
	of degree $0$ and $\degd$ respectively. 
	They satisfy the usual Jacobi and the degree $\degd$ coJacobi identity, plus the Leibniz relation: 
		\begin{equation*}
				-
			\begin{tikzpicture}
				[scale=.3,auto=left,baseline=0.45cm]  
				\node (in1) at (0,0) {};
				\node (in2) at (2,0) {};
				\node (n1) at (1,1.25) [coprod] {};;
				\node (n2) at (1,2.25) [prod] {};
				\node (out1) at (0,3.5)  {};
				\node (out2) at (2,3.5)  {};
				
				\foreach \from/\to in {in1/n1,in2/n1,n1/n2,n2/out1,n2/out2}
				\draw[gray!20!black] (\from) -- (\to);
			\end{tikzpicture}
		+
			\begin{tikzpicture}
				[scale=.3,auto=left,baseline=0.45cm]  
				\node (in1) at (1,0) {};
				\node (in2) at (3,0) {};
				\coordinate (itin2) at (3,1) {};
				\node (n1) at (1,1) [prod] {};
				\node (n2) at (2,2.5) [coprod] {};;
				\coordinate (itout1) at (0,2.5)  {};
				\node (out1) at (0,3.5)  {};
				\node (out2) at (2,3.5)  {};
				
				\foreach \from/\to in {in1/n1,n1/n2,n1/itout1,itout1/out1,n2/out2,in2/itin2,itin2/n2}
				\draw[gray!20!black] (\from) -- (\to);
			\end{tikzpicture}
			+
			\begin{tikzpicture}
				[scale=.3,auto=left,baseline=0.45cm]  
				\node (in1) at (0,0) {};
				\node (in2) at (2,0) {};
				\coordinate (itin1) at (0,1) {};
				\node (n2) at (2,1) [prod] {};
				\node (n1) at (1,2.5) [coprod] {};;
				\coordinate (itout2) at (3,2.5)  {};
				\node (out1) at (1,3.5)  {};
				\node (out2) at (3,3.5)  {};
				
				\foreach \from/\to in {in1/itin1,itin1/n1,n1/out1,in2/n2,n2/n1,n2/itout2,itout2/out2}
				\draw[gray!20!black] (\from) -- (\to);
			\end{tikzpicture}
			+
			\begin{tikzpicture}
				[scale=.3,auto=left,baseline=0.45cm]  
				\node[below,yshift=0.1cm] (in1) at (1,0) {};
				\node[below,yshift=0.1cm] (in2) at (2.5,0) {};
				\coordinate (itin2) at (2.5,1) {};
				\node (n1) at (1,1) [prod] {};
				\node (n2) at (1,2.5) [coprod] {};;
				\coordinate (itout1) at (0.25,1.75)  {};
				\coordinate (itout2) at (2.5,2.5)  {};
				\node[above,yshift=-0.1cm] (out1) at (1,3.5)  {};
				\node[above,yshift=-0.1cm] (out2) at (2.5,3.5)  {};
				
				\foreach \from/\to in {in1/n1,n1/itout1,itout1/n2,n2/out1,in2/itin2,itin2/n2,n1/itout2,itout2/out2}
				\draw[gray!20!black] (\from) -- (\to);
			\end{tikzpicture}
		+
			\begin{tikzpicture}
			[scale=.3,auto=left,baseline=0.45cm]  
				\begin{scope}[xscale=-1]
			\node[below,yshift=0.1cm] (in1) at (1,0) {};
			\node[below,yshift=0.1cm] (in2) at (2.5,0) {};
			\coordinate (itin2) at (2.5,1) {};
			\node (n1) at (1,1) [prod] {};
			\node (n2) at (1,2.5) [coprod] {};;
			\coordinate (itout1) at (0.25,1.75)  {};
			\coordinate (itout2) at (2.5,2.5)  {};
			\node[above,yshift=-0.1cm] (out1) at (1,3.5)  {};
			\node[above,yshift=-0.1cm] (out2) at (2.5,3.5)  {};
			
			\foreach \from/\to in {in1/n1,n1/itout1,itout1/n2,n2/out1,in2/itin2,itin2/n2,n1/itout2,itout2/out2}
			\draw[gray!20!black] (\from) -- (\to);
			\end{scope}
		\end{tikzpicture};
		\end{equation*}
		see \cite{Merkulov2004}. 
		One has a morphism of properads 
		\begin{equation*}\label{eq: morph LieBi BIB}
			\mathsf{LieB}^\degd \to \mathsf{BIB}^\degd
		\end{equation*}
		given by anti-symmetrizing the product and "$\degd$--anti-symmetrizing" the coproduct  \ie by sending the degree $\degd$ Lie cobracket to 
		\begin{equation*}
			\begin{tikzpicture}
				[scale=.35,auto=left,baseline=0.25cm]  
				\node[below,yshift=0.1cm] (in1) at (0,0) {\scriptsize{$1$}};
				\node[below,yshift=0.1cm] (in2) at (2,0) {\scriptsize{$2$}};
				\node (n1) at (1,1) [coprod] {};
				\node (out1) at (1,2)  {};
				\foreach \from/\to in {in1/n1,in2/n1,n1/out1}
				\draw[gray!20!black] (\from) -- (\to);
			\end{tikzpicture}  
			- (-1)^\degd 
			\begin{tikzpicture}
				[scale=.35,auto=left,baseline=0.25cm]  
				\node[below,yshift=0.1cm] (in1) at (0,0) {\scriptsize{$2$}};
				\node[below,yshift=0.1cm] (in2) at (2,0) {\scriptsize{$1$}};
				\node (n1) at (1,1) [coprod] {};
				\node (out1) at (1,2)  {};
				\foreach \from/\to in {in1/n1,in2/n1,n1/out1}
				\draw[gray!20!black] (\from) -- (\to);
			\end{tikzpicture}.
		\end{equation*}
	\end{rem}
	
	\begin{rem}
		If $\degd$ is odd, the genus 1 operation in $\mathsf{LieB}^\degd$  obtained by composing the Lie cobracket with the Lie bracket  is equal to zero. 
		The representations of this properad are known as the \emph{involutive} Lie bialgebras. 
	\end{rem}

	\subsection{The Koszul dual properad of $\bibl$}
	
	\begin{prop}
	The Koszul dual properad of $\bibl$ is given by $\mathcal{G}(s^{-1}\E^*)/(s^{-2}\mathsf{R}^{\perp})$, where the only non-trivial components of $s^{-1}\mathsf{E}^*$ are 
	\begin{equation*}
		(s^{-1}\mathsf{E}^*(1,2))_{-1}
		= 
		\vspan \left\langle	
			\begin{tikzpicture}
			[scale=.35,auto=left,baseline=0.35cm]  
			\node (in1) at (1,0) {};
			\node (n2) at (1,1) [colie] {};
			\node[above,yshift=-0.1cm] (out1) at (0,2) {\scriptsize{$1$}};
			\node[above,yshift=-0.1cm] (out2) at (2,2) {\scriptsize{$2$}};
			
			\foreach \from/\to in {in1/n2,n2/out1,n2/out2}
			\draw[gray!20!black] (\from) -- (\to);
		\end{tikzpicture},
			\begin{tikzpicture}
			[scale=.35,auto=left,baseline=0.35cm]  
			\node (in1) at (1,0) {};
			\node (n2) at (1,1) [colie] {};
			\node[above,yshift=-0.1cm] (out1) at (0,2) {\scriptsize{$2$}};
			\node[above,yshift=-0.1cm] (out2) at (2,2) {\scriptsize{$1$}};
			
			\foreach \from/\to in {in1/n2,n2/out1,n2/out2}
			\draw[gray!20!black] (\from) -- (\to);
		\end{tikzpicture}
		\right\rangle 
		\hspace{.25cm}
		\text{ and } 
		\hspace{.25cm}
		(s^{-1}\mathsf{E}^*(2,1))_{-\degd-1}
		=
		\vspan 
		\left \langle
		\begin{tikzpicture}
			[scale=.35,auto=left,baseline=0.15cm]  
			\node[below,yshift=0.1cm] (in1) at (0,0) {\scriptsize{$1$}};
			\node[below,yshift=0.1cm] (in2) at (2,0) {\scriptsize{$2$}};
			\node (n1) at (1,1) [prelie] {};
			\node (out1) at (1,2)  {};
			
			\foreach \from/\to in {in1/n1,in2/n1,n1/out1}
			\draw[gray!20!black] (\from) -- (\to);
		\end{tikzpicture},
		\begin{tikzpicture}
			[scale=.35,auto=left,baseline=0.15cm]  
			\node[below,yshift=0.1cm] (in1) at (0,0) {\scriptsize{$2$}};
			\node[below,yshift=0.1cm] (in2) at (2,0) {\scriptsize{$1$}};
			\node (n1) at (1,1) [prelie] {};
			\node (out1) at (1,2)  {};
			
			\foreach \from/\to in {in1/n1,in2/n1,n1/out1}
			\draw[gray!20!black] (\from) -- (\to);
		\end{tikzpicture}
		\right \rangle,
	\end{equation*}
	and  $s^{-2}\mathsf{R}^{\perp}$ is generated by 
	\begin{equation}\label{rel Koszul 1}
		\begin{tikzpicture}
			[decoration={
				markings,
				mark=at position 0.6 with {\arrow{>}}},
			>=stealth,gray!20!black,,scale=.3,auto=left,baseline=-0.45cm]  
			\begin{scope}[yscale=-1,xscale=1]
				\node (in1) at (0,-0.5) {};
				\node (in2) at (1,-0.5) {};
				\node (in3) at (3,-0.5) {};
				\coordinate (itin1) at (0,1) {};
				\node (n2) at (2,1) [colie] {};
				\node (n1) at (1,2.5) [colie] {};;
				\coordinate (out2) at (1,3)  {};
				
				\foreach \from/\to in {in1/itin1,itin1/n1,in2/n2,n2/n1,in3/n2,out2/n1}
				\draw[gray!20!black] (\from) -- (\to);
			\end{scope}
		\end{tikzpicture}
		+
		\begin{tikzpicture}
			[decoration={
				markings,
				mark=at position 0.6 with {\arrow{>}}},
			>=stealth,gray!20!black,,scale=.3,auto=left,baseline=-0.45cm]
			\begin{scope}[yscale=-1,xscale=1]  
				\node (in1) at (0,-0.5) {};
				\node (in2) at (2,-0.5) {};
				\node (in3) at (3,-0.5) {};
				\coordinate (itin1) at (3,1) {};
				\node (n2) at (1,1) [colie] {};
				\node (n1) at (2,2.5) [colie] {};;
				\coordinate (out2) at (2,3)  {};
				
				\foreach \from/\to in {in3/itin1,itin1/n1,in1/n2,n2/n1,in2/n2,out2/n1}
				\draw[gray!20!black] (\from) -- (\to);
			\end{scope}
		\end{tikzpicture};
		\hspace{0.4cm}
		\begin{tikzpicture}
			[decoration={
				markings,
				mark=at position 0.6 with {\arrow{>}}},
			>=stealth,gray!20!black,,scale=.3,auto=left,baseline=0.45cm]  
			\node (in1) at (0,-0.5) {};
			\node (in2) at (1,-0.5) {};
			\node (in3) at (3,-0.5) {};
			\coordinate (itin1) at (0,1) {};
			\node (n2) at (2,1) [prelie] {};
			\node (n1) at (1,2.5) [prelie] {};;
			\coordinate (out2) at (1,3)  {};
			
			\foreach \from/\to in {in1/itin1,itin1/n1,in2/n2,n2/n1,in3/n2,out2/n1}
			\draw[gray!20!black] (\from) -- (\to);
		\end{tikzpicture}
		+(-1)^\degd
		\begin{tikzpicture}
			[decoration={
				markings,
				mark=at position 0.6 with {\arrow{>}}},
			>=stealth,gray!20!black,,scale=.3,auto=left,baseline=0.45cm]  
			\node (in1) at (0,-0.5) {};
			\node (in2) at (2,-0.5) {};
			\node (in3) at (3,-0.5) {};
			\coordinate (itin1) at (3,1) {};
			\node (n2) at (1,1) [prelie] {};
			\node (n1) at (2,2.5) [prelie] {};;
			\coordinate (out2) at (2,3)  {};
			
			\foreach \from/\to in {in3/itin1,itin1/n1,in1/n2,n2/n1,in2/n2,out2/n1}
			\draw[gray!20!black] (\from) -- (\to);
		\end{tikzpicture};
		\hspace{0.4cm}
		(-1)^{\degd}
		\begin{tikzpicture}
			[scale=.3,auto=left,baseline=0.45cm]  
			\node (in1) at (0,0) {};
			\node (in2) at (2,0) {};
			\node (n1) at (1,1.25) [prelie] {};;
			\node (n2) at (1,2.25) [colie] {};
			\node (out1) at (0,3.5)  {};
			\node (out2) at (2,3.5)  {};
			
			\foreach \from/\to in {in1/n1,in2/n1,n1/n2,n2/out1,n2/out2}
			\draw[gray!20!black] (\from) -- (\to);
		\end{tikzpicture}
		+
		\begin{tikzpicture}
			[scale=.3,auto=left,baseline=0.45cm]  
			\node (in1) at (1,0) {};
			\node (in2) at (3,0) {};
			\coordinate (itin2) at (3,1) {};
			\node (n1) at (1,1) [colie] {};
			\node (n2) at (2,2.5) [prelie] {};;
			\coordinate (itout1) at (0,2.5)  {};
			\node (out1) at (0,3.5)  {};
			\node (out2) at (2,3.5)  {};
			
			\foreach \from/\to in {in1/n1,n1/n2,n1/itout1,itout1/out1,n2/out2,in2/itin2,itin2/n2}
			\draw[gray!20!black] (\from) -- (\to);
		\end{tikzpicture}
		;\hspace{.5cm}
		(-1)^{\degd}
		\begin{tikzpicture}
			[scale=.3,auto=left,baseline=0.45cm]  
			\node (in1) at (0,0) {};
			\node (in2) at (2,0) {};
			\node (n1) at (1,1.25) [prelie] {};;
			\node (n2) at (1,2.25) [colie] {};
			\node (out1) at (0,3.5)  {};
			\node (out2) at (2,3.5)  {};
			
			\foreach \from/\to in {in1/n1,in2/n1,n1/n2,n2/out1,n2/out2}
			\draw[gray!20!black] (\from) -- (\to);
		\end{tikzpicture}
		+
		\begin{tikzpicture}
			[scale=.3,auto=left,baseline=0.45cm]  
			\node (in1) at (0,0) {};
			\node (in2) at (2,0) {};
			\coordinate (itin1) at (0,1) {};
			\node (n2) at (2,1) [colie] {};
			\node (n1) at (1,2.5) [prelie] {};;
			\coordinate (itout2) at (3,2.5)  {};
			\node (out1) at (1,3.5)  {};
			\node (out2) at (3,3.5)  {};
			
			\foreach \from/\to in {in1/itin1,itin1/n1,n1/out1,in2/n2,n2/n1,n2/itout2,itout2/out2}
			\draw[gray!20!black] (\from) -- (\to);
		\end{tikzpicture}
	\end{equation}
	\begin{equation}\label{rel Koszul 2}
		\begin{tikzpicture}
			[scale=.3,auto=left,baseline=0.45cm]  
			\begin{scope}[yscale=1,xscale=-1]
				\node[below,yshift=0.1cm] (in1) at (1,0) {\scriptsize{$2$}};
				\node[below,yshift=0.1cm] (in2) at (2.5,0) {\scriptsize{$1$}};
				\coordinate (itin2) at (2.5,1) {};
				\node (n1) at (1,1) [colie] {};
				\node (n2) at (1,2.5) [prelie] {};;
				\coordinate (itout1) at (0.25,1.75)  {};
				\coordinate (itout2) at (2.5,2.5)  {};
				\node[above,yshift=-0.1cm] (out1) at (1,3.5)  {\scriptsize{$2$}};
				\node[above,yshift=-0.1cm] (out2) at (2.5,3.5)  {\scriptsize{$1$}};
				
				\foreach \from/\to in {in1/n1,n1/itout1,itout1/n2,n2/out1,in2/itin2,itin2/n2,n1/itout2,itout2/out2}
				\draw[gray!20!black] (\from) -- (\to);
			\end{scope}
		\end{tikzpicture}
		- (-1)^\degd	
		\begin{tikzpicture}
			[scale=.3,auto=left,baseline=0.45cm]  
			\begin{scope}[yscale=1,xscale=-1]
				\node[below,yshift=0.1cm] (in1) at (1,0) {\scriptsize{$1$}};
				\node[below,yshift=0.1cm] (in2) at (2.5,0) {\scriptsize{$2$}};
				\coordinate (itin2) at (2.5,1) {};
				\node (n1) at (1,1) [colie] {};
				\node (n2) at (1,2.5) [prelie] {};;
				\coordinate (itout1) at (0.25,1.75)  {};
				\coordinate (itout2) at (2.5,2.5)  {};
				\node[above,yshift=-0.1cm] (out1) at (1,3.5)  {\scriptsize{$1$}};
				\node[above,yshift=-0.1cm] (out2) at (2.5,3.5)  {\scriptsize{$2$}};
				
				\foreach \from/\to in {in1/n1,n1/itout1,itout1/n2,n2/out1,in2/itin2,itin2/n2,n1/itout2,itout2/out2}
				\draw[gray!20!black] (\from) -- (\to);
			\end{scope}
		\end{tikzpicture}
		\text{ ; }\hspace{.7cm}
		\begin{tikzpicture}
			[scale=.3,auto=left,baseline=0.45cm]  
			\begin{scope}[yscale=1,xscale=-1]
				\node[below,yshift=0.1cm] (in1) at (1,0) {\scriptsize{$2$}};
				\node[below,yshift=0.1cm] (in2) at (2.5,0) {\scriptsize{$1$}};
				\coordinate (itin2) at (2.5,1) {};
				\node (n1) at (1,1) [colie] {};
				\node (n2) at (1,2.5) [prelie] {};;
				\coordinate (itout1) at (0.25,1.75)  {};
				\coordinate (itout2) at (2.5,2.5)  {};
				\node[above,yshift=-0.1cm] (out1) at (1,3.5)  {\scriptsize{$2$}};
				\node[above,yshift=-0.1cm] (out2) at (2.5,3.5)  {\scriptsize{$1$}};
				
				\foreach \from/\to in {in1/n1,n1/itout1,itout1/n2,n2/out1,in2/itin2,itin2/n2,n1/itout2,itout2/out2}
				\draw[gray!20!black] (\from) -- (\to);
			\end{scope}
		\end{tikzpicture}
		+
		\begin{tikzpicture}
			[scale=.3,auto=left,baseline=0.45cm]  
			\node[below,yshift=0.1cm] (in1) at (1,0) {\scriptsize{$1$}};
			\node[below,yshift=0.1cm] (in2) at (2.5,0) {\scriptsize{$2$}};
			\coordinate (itin2) at (2.5,1) {};
			\node (n1) at (1,1) [colie] {};
			\node (n2) at (1,2.5) [prelie] {};;
			\coordinate (itout1) at (0.25,1.75)  {};
			\coordinate (itout2) at (2.5,2.5)  {};
			\node[above,yshift=-0.1cm] (out1) at (1,3.5)  {\scriptsize{$2$}};
			\node[above,yshift=-0.1cm] (out2) at (2.5,3.5)  {\scriptsize{$1$}};
			
			\foreach \from/\to in {in1/n1,n1/itout1,itout1/n2,n2/out1,in2/itin2,itin2/n2,n1/itout2,itout2/out2}
			\draw[gray!20!black] (\from) -- (\to);
		\end{tikzpicture};
		\hspace{.7cm}
		\begin{tikzpicture}
			[scale=.25,auto=left,baseline=-.1cm]  
			
			\node (out1) at (0,3) {};
			\node (out2) at (1,3)   {};
			\node (n1) at (0,1) [prelie] {};
			
			\coordinate (int) at (-1,0) {};
			\coordinate (int2) at (1,0) {};
			\node (n0) at (0,-1) [colie] {}; 
			\node (in1) at (0,-3) {};
			
			\foreach \from/\to in {int/n1,n1/out1,in1/n0,n0/int,n0/int2,int2/n1}
			\draw[gray!40!black] (\from) -- (\to);
		\end{tikzpicture}
		\hspace{.4cm}\text{ and }\hspace{.4cm}
		\begin{tikzpicture}
			[scale=.25,auto=left,baseline=-.2cm]  
			
			\node (out1) at (0,3) {};
			\node (out2) at (1,3)   {};
			\node (n1) at (0,1) [prelie] {};
			
			\coordinate (int) at (-1,0) {};
			\coordinate (int2) at (1,0) {};
			\coordinate (bint) at (-1,-1) {};
			\coordinate (bint2) at (1,-1) {};
			\node (n0) at (0,-2) [colie] {}; 
			\node (in1) at (0,-4) {};
			
			\foreach \from/\to in {int/n1,n1/out1,in1/n0,n0/bint,n0/bint2,int2/n1,bint/int2,bint2/int}
			\draw[gray!40!black] (\from) -- (\to);
		\end{tikzpicture}
	.
	\end{equation}	
	\end{prop}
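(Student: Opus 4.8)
The plan is to apply the explicit formula for the Koszul dual recalled in Section~\ref{sec: conv}: since $\E$ is finite-dimensional, $(\bibl)^!\cong\mathcal{G}(s^{-1}\E^*)/(s^{-2}\mathsf{R}^\perp)$, so it suffices to identify the cogenerating $\Sigma$-bimodule $s^{-1}\E^*$ and then to compute the orthogonal complement $\mathsf{R}^\perp$ inside the weight-two part $\mathcal{G}(\E)^{(2)}$, using the graph-monomial pairing with the sign conventions of \cite[Lemma~1.30]{Leray-Vallette}. The first point is immediate: $\E(1,2)=\Bbbk[\Sigma_2]$ lies in degree $0$ and $\E(2,1)=\Bbbk[\Sigma_2]$ in degree $\degd$, hence $\E^*(1,2)$ is in degree $0$ and $\E^*(2,1)$ in degree $-\degd$, and after desuspending by $s^{-1}$ one obtains the two displayed components, concentrated in degrees $-1$ and $-\degd-1$; one fixes the ``colie'' (resp.\ ``prelie'') generator drawn in the statement as the basis element of $s^{-1}\E^*(1,2)$ (resp.\ $s^{-1}\E^*(2,1)$) dual to the corresponding generator of $\E$.

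The computation of $\mathsf{R}^\perp$ I would carry out arity by arity. As $\E$ is concentrated in the binary components $\E(1,2)$ and $\E(2,1)$, the weight-two module $\mathcal{G}(\E)^{(2)}$ --- hence $\mathsf{R}$ and $\mathsf{R}^\perp$ --- is supported on the arities $(1,3)$, $(3,1)$, $(2,2)$ in genus zero and on $(1,1)$ in genus one. In arity $(1,3)$ only the product occurs and $\mathsf{R}(1,3)$ is the associativity relation, so this is the classical computation behind $\mathsf{Ass}^!\cong\mathsf{Ass}$, and $\mathsf{R}^\perp(1,3)$ is generated by the coassociativity relation for the ``colie'' generator (first relation of~\eqref{rel Koszul 1}). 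Arity $(3,1)$ is the mirror situation for the coproduct; the same computation, now transporting the degree $\degd$ of $\delta$ through the Koszul sign rule and the shift $s^{-2}$, gives the second relation of~\eqref{rel Koszul 1}, with its sign $(-1)^\degd$. In arity $(1,1)$ there is no relation of $\bibl$, so $\mathsf{R}(1,1)=0$ and $\mathsf{R}^\perp(1,1)$ is the whole two-dimensional $\mathcal{G}(\E)^{(2)}(1,1)^*$, spanned by the two genus-one ``colie after prelie'' loops (straight and crossed) drawn at the end of~\eqref{rel Koszul 2}.

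The one genuine computation is in arity $(2,2)$. There the two-vertex monomials split into the ``coproduct after product'' type and the ``product after coproduct'' type, and $\mathsf{R}(2,2)$ is spanned by the $\Sigma$-orbit of the infinitesimal (Leibniz) compatibility relation together with the $\Sigma$-orbit of the balanced relation~\eqref{eq: graph relation balanceator}. I would write down the Gram matrix of the pairing between a spanning set of $\mathsf{R}(2,2)$ and the dual arity-$(2,2)$ monomials of $\mathcal{G}(s^{-1}\E^*)^{(2)}$, compute its kernel, and read off a generating family of $\mathsf{R}^\perp(2,2)$ --- which will be the remaining relations displayed in~\eqref{rel Koszul 1} together with the first relations of~\eqref{rel Koszul 2}. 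Assembling all arities then yields the stated presentation.

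The step I expect to be the main obstacle is precisely this arity-$(2,2)$ linear algebra together with the sign bookkeeping: one must orient and sign each graph monomial consistently under dualization and the shift $s^{-2}$ (keeping track of the degree of $\delta$), assemble the full pairing matrix between the mixed monomials and the span of the Leibniz and balanced relations, and check that the listed elements of~\eqref{rel Koszul 1}--\eqref{rel Koszul 2} span exactly its orthogonal complement --- in particular that the contribution of the balanced relation~\eqref{eq: graph relation balanceator} is correctly separated from that of the Leibniz relation. Everything else is a routine transcription of the $\mathsf{Ass}^!\cong\mathsf{Ass}$ computation.
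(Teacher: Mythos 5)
Your plan follows the paper's proof exactly: the generators with their degrees, the arities $(1,3)$ and $(3,1)$, and the genus-one $(1,1)$ component (where $\mathsf{R}(1,1)=0$, so both loops appear) are treated as immediate, and the entire content is the arity-$(2,2)$ orthogonality check, which is precisely the step the paper isolates. The only difference is that the paper bypasses your full Gram-matrix/kernel computation with a dimension count — $\dim\mathcal{G}^{(2)}(\mathsf{E})(2,2)=20$, $\dim\mathsf{R}(2,2)=4+2=6$, and the $\Sigma_2^{\textnormal{op}}\times\Sigma_2$--orbits of the last two elements of \eqref{rel Koszul 1} and the first two of \eqref{rel Koszul 2} span $2\times4+2+4=14$ dimensions, so after checking that these elements pair to zero against the Leibniz and balanced relations they necessarily generate all of $\mathsf{R}^{\perp}(2,2)$ — whereas your kernel computation, carried out with the sign bookkeeping you flag, would land in the same place.
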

\begin{proof}
The only non immediate fact is that  $s^{-2}\mathsf{R}^{\perp}(2,2)$ is indeed generated by the last two elements of \eqref{rel Koszul 1} and the first two elements of \eqref{rel Koszul 2}. 
First, observe that the dimension of the underlying vector space of the $\Sigma$--bimodule  $\mathcal{G}^{(2)}(\mathsf{E})(2,2)$ is 20. 
Indeed, as a $\Sigma$--bimodule it is generated by the five trees 
\begin{equation*}
		\begin{tikzpicture}
		[scale=.3,auto=left,baseline=0.45cm]  
		\node (in1) at (0,0) {};
		\node (in2) at (2,0) {};
		\node (n1) at (1,1.25) [prelie] {};;
		\node (n2) at (1,2.25) [colie] {};
		\node (out1) at (0,3.5)  {};
		\node (out2) at (2,3.5)  {};
		
		\foreach \from/\to in {in1/n1,in2/n1,n1/n2,n2/out1,n2/out2}
		\draw[gray!20!black] (\from) -- (\to);
	\end{tikzpicture}
	;\hspace{.5cm}
		\begin{tikzpicture}
		[scale=.3,auto=left,baseline=0.45cm]  
		\node (in1) at (1,0) {};
		\node (in2) at (3,0) {};
		\coordinate (itin2) at (3,1) {};
		\node (n1) at (1,1) [colie] {};
		\node (n2) at (2,2.5) [prelie] {};;
		\coordinate (itout1) at (0,2.5)  {};
		\node (out1) at (0,3.5)  {};
		\node (out2) at (2,3.5)  {};
		
		\foreach \from/\to in {in1/n1,n1/n2,n1/itout1,itout1/out1,n2/out2,in2/itin2,itin2/n2}
		\draw[gray!20!black] (\from) -- (\to);
	\end{tikzpicture}
	;\hspace{.5cm}
	\begin{tikzpicture}
		[scale=.3,auto=left,baseline=0.45cm]  
		\node (in1) at (0,0) {};
		\node (in2) at (2,0) {};
		\coordinate (itin1) at (0,1) {};
		\node (n2) at (2,1) [colie] {};
		\node (n1) at (1,2.5) [prelie] {};;
		\coordinate (itout2) at (3,2.5)  {};
		\node (out1) at (1,3.5)  {};
		\node (out2) at (3,3.5)  {};
		
		\foreach \from/\to in {in1/itin1,itin1/n1,n1/out1,in2/n2,n2/n1,n2/itout2,itout2/out2}
		\draw[gray!20!black] (\from) -- (\to);
	\end{tikzpicture}
		;\hspace{.5cm}
				\begin{tikzpicture}
				[scale=.3,auto=left,baseline=0.45cm]  
				\begin{scope}[yscale=1,xscale=-1]
					\node[below,yshift=0.1cm] (in1) at (1,0) {};
					\node[below,yshift=0.1cm] (in2) at (2.5,0) {};
					\coordinate (itin2) at (2.5,1) {};
					\node (n1) at (1,1) [colie] {};
					\node (n2) at (1,2.5) [prelie] {};;
					\coordinate (itout1) at (0.25,1.75)  {};
					\coordinate (itout2) at (2.5,2.5)  {};
					\node[above,yshift=-0.1cm] (out1) at (1,3.5)  {};
					\node[above,yshift=-0.1cm] (out2) at (2.5,3.5)  {};
					
					\foreach \from/\to in {in1/n1,n1/itout1,itout1/n2,n2/out1,in2/itin2,itin2/n2,n1/itout2,itout2/out2}
					\draw[gray!20!black] (\from) -- (\to);
				\end{scope}
			\end{tikzpicture}
			\hspace{.25cm}
			\text{ and }
			\hspace{.25cm}
			\begin{tikzpicture}
				[scale=.3,auto=left,baseline=0.45cm]  
				\node[below,yshift=0.1cm] (in1) at (1,0) {};
				\node[below,yshift=0.1cm] (in2) at (2.5,0) {};
				\coordinate (itin2) at (2.5,1) {};
				\node (n1) at (1,1) [colie] {};
				\node (n2) at (1,2.5) [prelie] {};
				\coordinate (itout1) at (0.25,1.75)  {};
				\coordinate (itout2) at (2.5,2.5)  {};
				\node[above,yshift=-0.1cm] (out1) at (1,3.5)  {};
				\node[above,yshift=-0.1cm] (out2) at (2.5,3.5)  {};
				
				\foreach \from/\to in {in1/n1,n1/itout1,itout1/n2,n2/out1,in2/itin2,itin2/n2,n1/itout2,itout2/out2}
				\draw[gray!20!black] (\from) -- (\to);
			\end{tikzpicture}; 
\end{equation*}
and by taking the action of $\Sigma_2^{\textnormal{op}}\times \Sigma_2$ into account, one gets the resulting dimension for the underlying vector space. 
Similarly, the dimension of $\mathsf{R}(2,2)$ is $4+2=6$ (the  $\Sigma_2^{\textnormal{op}}\times \Sigma_2$--orbit of \eqref{eq: graph relation balanceator} generates a vector space of dimension 2). 
On the other hand, the orthogonal $\mathsf{R}^{\perp}(2,2)$ is of dimension $14$. 
Indeed, the $\Sigma_2^{\textnormal{op}}\times \Sigma_2$--orbits of the last two elements of \eqref{rel Koszul 1}, the first element of \eqref{rel Koszul 2} and the second element of \eqref{rel Koszul 2}, generate a vector space of dimension $2\times 4+2+4$. 
This prove the statement. 
\end{proof}	
	\begin{rem}
		From the first two relations of \eqref{rel Koszul 2}, one obtains the following relation
		\begin{equation}\label{rel Koszul 2b}
			\begin{tikzpicture}
				[scale=.3,auto=left,baseline=0.45cm]  
				\begin{scope}[yscale=1,xscale=-1]
					\node[below,yshift=0.1cm] (in1) at (1,0) {\scriptsize{$2$}};
					\node[below,yshift=0.1cm] (in2) at (2.5,0) {\scriptsize{$1$}};
					\coordinate (itin2) at (2.5,1) {};
					\node (n1) at (1,1) [colie] {};
					\node (n2) at (1,2.5) [prelie] {};;
					\coordinate (itout1) at (0.25,1.75)  {};
					\coordinate (itout2) at (2.5,2.5)  {};
					\node[above,yshift=-0.1cm] (out1) at (1,3.5)  {\scriptsize{$2$}};
					\node[above,yshift=-0.1cm] (out2) at (2.5,3.5)  {\scriptsize{$1$}};
					\foreach \from/\to in {in1/n1,n1/itout1,itout1/n2,n2/out1,in2/itin2,itin2/n2,n1/itout2,itout2/out2}
					\draw[gray!20!black] (\from) -- (\to);
				\end{scope}
			\end{tikzpicture}
			+(-1)^\degd 
			\begin{tikzpicture}
				[scale=.3,auto=left,baseline=0.45cm]  
				\node[below,yshift=0.1cm] (in1) at (1,0) {\scriptsize{$2$}};
				\node[below,yshift=0.1cm] (in2) at (2.5,0) {\scriptsize{$1$}};
				\coordinate (itin2) at (2.5,1) {};
				\node (n1) at (1,1) [colie] {};
				\node (n2) at (1,2.5) [prelie] {};;
				\coordinate (itout1) at (0.25,1.75)  {};
				\coordinate (itout2) at (2.5,2.5)  {};
				\node[above,yshift=-0.1cm] (out1) at (1,3.5)  {\scriptsize{$1$}};
				\node[above,yshift=-0.1cm] (out2) at (2.5,3.5)  {\scriptsize{$2$}};
				
				\foreach \from/\to in {in1/n1,n1/itout1,itout1/n2,n2/out1,in2/itin2,itin2/n2,n1/itout2,itout2/out2}
				\draw[gray!20!black] (\from) -- (\to);
			\end{tikzpicture}
			\in s^{-2}\mathsf{R}^{\perp}(2,2). 
		\end{equation}
	\end{rem}

\begin{rem}
	The properad $(\bibl)^!$ governs \textit{involutive non-commutative Frobenius algebras} with a degree one product and a degree $-\lambda-1$ coproduct that satisfy the  additional conditions given by the first two relations of \eqref{rel Koszul 2}. 
	Let $V$ be such an algebra. 
	Let us write  $\Delta(a)=a^1\ot a^2$ for the coproduct of $a\in V$ (using Sweedler's notation) and $ab$ for the degree one product of $a,b\in V$. 
	These conditions are: 
	\begin{align*}
		 (-1)^{(\degd+1)a +b^1(a+1)} b^1\ot ab^2 
		&= (-1)^{ab+a^1+ a^1a^2 + (\degd+1)b +\degd} 	ba^2\ot a^1 \\
		(-1)^{(\degd+1)a +b^1(a+1)} b^1\ot ab^2
		&= 	(-1)^{1+ab+ab^2} b^1a\ot b^2. 
	\end{align*}
\end{rem}	
	
	Let us establish two technical lemmata. 
	The first one will be used in the proofs of Lemmata \ref{lem: red well-defined} and \ref{lem: red morphism}; the second one will be used in the proof of Lemma \ref{lem: compo genus}. 
	Both lemmata will also be used in the proof of Theorem \ref{prop: iso uBIB and uDPois}. 
	\begin{lem}
	 In $\bibl$	one has the following equalities of level trees:  
		\begin{equation}\label{M1}
			a) 	

			=0.
		\end{equation*}
		The case $j=2$ and $k=1$ is similar. 
		For $j=k=2$, one may move down the lower blue vertex, then one may apply the relation \eqref{rel Koszul 2b} on the lowest two vertices, and then one may apply the last relation of \eqref{rel Koszul 1}. 
		This exhibits the pencil box of type a) but with $j=1$ and $k=2$ (one blue vertex can be moved outside the box). 
		Finally, for any $j\geq 2$ and $k\geq 2$, one may use the associativity and coassociativity relations to exhibit a pencil box with $j=k=2$. 
		\item The pencil boxes of type b). 
		For $j=2$ and $k=1$, by using the coassociativity of the blue vertex, one obtains the result. 
		For $j\geq 2$ and $k\geq 2$, by using the associativity and coassociativity relations, one obtains a box that is symmetric to the pencil box of type a) and therefore one may proceed similarly. 
		\item The pencil boxes of type c) are treated similarly as the ones of type b).
		\item For the pencil boxes of type d) with $j,k\geq 2$, by  using the associativity and coassociativity relations, one obtains a box that is symmetric to the pencil box of type a) with $j,k\geq 1$. 
		Such a box can be shown to be zero in the same way (according to the symmetry) as performed to prove that the pencil boxes of type a) are zero.   
		\end{itemize}
			Now let us investigate the twisted pencil boxes. 
		We will treat the type a) ($a=b=1$), the other ones are left to the reader. 
		 For $j=1=k$ this is tautological. 
			For $j=2$ and $k=1$, one may use the third relation of \eqref{rel Koszul 1}:
			 \begin{equation*}
			 		\begin{tikzpicture}
			 		[scale=.3,auto=left,baseline=0cm]  
			 		\begin{scope}[yscale=1,xscale=1]
			 			\coordinate (in1) at (1,0) {};
			 			
			 			\coordinate (itin2) at (2.5,1) {};
			 			
			 			\node (n2) at (1.5,2.5) [prelie] {};
			 			\node (n3) at (.75,1.75) [prelie] {};
			 			\node (n1) at (1,-1.75) [colie] {};

			 			\coordinate (nd3) at (1.5,1)  {};
			 			\coordinate (nd2) at (2.75,1.25)  {};
			 			\coordinate (nd3a) at (0,1)  {};
			 			\coordinate (itb1) at (1.75,-1)  {};
			 			\coordinate (itb2) at (.25,-1)  {};
			 			
			 			\node[above,yshift=-0.1cm] (inp1) at (1.5,3.5)  {};
			 			
			 			\node[above,yshift=-0.1cm] (out1) at (1,-3)  {};
			 			\node[above,yshift=-0.1cm] (out2) at (0,-2.5)  {};
			 			\foreach \from/\to in {inp1/n2,n2/n3,n3/nd3,n3/nd3a,n2/nd2,
			 				nd3a/itb1,
			 				itb2/nd2,n1/itb2,n1/itb1,n1/out1}
			 			\draw[gray!20!black] (\from) -- (\to);
			 		\end{scope}
			 	\end{tikzpicture}
			 	=
			 		\begin{tikzpicture}
			 		[scale=.3,auto=left,baseline=0cm]  
			 		\begin{scope}[yscale=1,xscale=1]
			 			\coordinate (in1) at (1,0) {};
			 			
			 			\coordinate (itin2) at (2.5,1) {};
			 			
			 			\node (n2) at (1.5,2.5) [prelie] {};
			 			\node (n3) at (2.25,-.5) [prelie] {};
			 			\node (n1) at (1,-1.75) [colie] {};
			 			
			 			\coordinate (nd2) at (2.25,1.5)  {};
			 			\coordinate (nd2a) at (.75,1.5)  {};
			 			\coordinate (nd3b) at (3,-1.5)  {};
			 			\coordinate (itb2) at (.25,-1)  {};
			 			
			 			\node[above,yshift=-0.1cm] (inp1) at (1.5,3.5)  {};
			 			
			 			\node[above,yshift=-0.1cm] (out1) at (1,-3)  {};
			 			\foreach \from/\to in {inp1/n2,n3/nd2a,n2/nd2,n3/n1,
			 				nd2a/n2,n3/nd3b,
			 				itb2/nd2,n1/itb2,n1/out1}
			 			\draw[gray!20!black] (\from) -- (\to);
			 		\end{scope}
			 	\end{tikzpicture}
			 	=
			 	\pm
			 		\begin{tikzpicture}
			 		[scale=.3,auto=left,baseline=0cm]  
			 		\begin{scope}[yscale=1,xscale=-1]
			 			\coordinate (in1) at (1,0) {};
			 			
			 			\coordinate (itin2) at (2.5,1) {};
			 			\node (n1) at (1,0) [colie] {};
			 			\node (n2) at (1,2.5) [prelie] {};
			 			\node (n3) at (1,-1) [prelie] {};

			 			\coordinate (it1) at (1.75,1)  {};
			 			\coordinate (it2) at (0.25,1)  {};
			 			\coordinate (itb1) at (1.75,1.5)  {};
			 			\coordinate (itb2) at (0.25,1.5)  {};
			 			
			 			\node[above,yshift=-0.1cm] (out1) at (1,3.5)  {};
			 			
			 			\node[above,yshift=-0.1cm] (inp1) at (1.75,-2.5)  {};
			 			\node[above,yshift=-0.1cm] (inp2) at (0,-2.5)  {};
			 			\foreach \from/\to in {n1/it2,itb1/n2,n2/out1,itb2/n2,n1/it1,n3/n1,inp1/n3,inp2/n3,it1/itb2,it2/itb1}
			 			\draw[gray!20!black] (\from) -- (\to);
			 		\end{scope}
			 	\end{tikzpicture}
			 	=0.
			 \end{equation*}
			For $j=2$ and $k=2$, one may use the associativity relation of the crossed vertices, then move the upper crossed vertex below the upper blue vertex, and then apply last relation of \eqref{rel Koszul 1}. 
			This exhibits a twisted box as in the previous case for $j=2$ and $k=1$. 
			Finally, for any $j\geq 2$ and $k\geq 2$, one may use the associativity and coassociativity relations to exhibit a twisted pencil box as in the previous case for $j=k=2$. 	
	\end{proof}

	\section{Curved homotopy balanced infinitesimal bialgebras}\label{sec: curved}

Henceforth (and for the rest of this paper), for simplicity, we consider $\bibl$ for $\lambda=-1$ and we write $\bib$ for $\bib^{-1}$.  
\\

In this section, we consider the notion of \textit{curved} homotopy balanced infinitesimal bialgebras, and we show that it is equivalent to that of curved homotopy double Poisson gebras exposed in \cite{Leray-Vallette}. 
To do so, we closely follow \textit{loc. cit.} by introducing the unital extension of $\bib^!$, and we show that it is isomorphic to the unital extension of $\mathsf{DPois}^!$. 
\\

We let $\mathsf{uBIB}^!$ be the unital extension of $\bib^!= \mathcal{G}(\mathsf{E_{BIB^!}})/(\mathsf{R_{BIB^!}})$. 
It is the properad generated by $\mathsf{E_{uBIB^!}}=\mathsf{E_{BIB^!}} \oplus \mathsf{U}$ where the only non-trivial component of $\mathsf{U}$ is 
\begin{equation*}
	\mathsf{U}(1,0)_{1} = 
	\vspan\langle	
.
	\end{equation*}
\end{proof}

Let us consider the following $\Sigma$--bimodules 
\begin{equation*}
	\mathsf{cDPois}^\antishrieck := (\mathsf{uDPois}^!)^* \text{ and }  
	\mathsf{cBIB}^\antishrieck := (\mathsf{uBIB}^!)^*. 
\end{equation*}
Let us fix $\mathsf{C}$ to be either $\mathsf{cDPois}^\antishrieck $ or $\mathsf{cBIB}^\antishrieck$. 
Let us dualize the infinitesimal composition map: one obtains partial decomposition maps of the following form,   
\begin{equation*}
	\Delta_{(1,1)}\co \mathsf{C} \to \mathsf{C}\boxtimes_{(1,1)} \mathsf{C} \cong \mathcal{G}^c(\mathsf{C})^{(2)}. 
\end{equation*}
In contrast to that of $(\mathsf{DPois}^!)^*$ and $(\mathsf{BIB}^!)^*$, this map does not endow $\mathsf{C}$ with a structure of coproperad. 
Indeed, the iterations of this map produce infinitely many terms. 
What we do have, however, is the following structure. 
\begin{defn}{\cite[Definition 1.15]{Leray-Vallette}} 
	A \emph{partial coproperad} is a $\Sigma$--bimodule $\mathsf{C}$ endowed with a partial decomposition map that satisfies the properties of the restriction of a comonadic product. 
\end{defn}

To a partial coproperad $\mathsf{C}$, one may associate the cobar construction 
$\cobd{\mathsf{C}}$ whose underlying $\Sigma$--bimodule is 
$\mathcal{G}(s^{-1} \mathsf{C})$ (no reduction of $\mathsf{C}$) and the differential is induced by the partial decomposition maps and the internal differential of $\mathsf{C}$. 
We let 
\begin{equation*}
	\mathsf{cDPois}_\infty := \cobd{\mathsf{cDPois}^\antishrieck} \text{ and }  
	\mathsf{cBIB}_\infty := \cobd{\mathsf{cBIB}^\antishrieck}.
\end{equation*}

\begin{thm}
	There is an isomorphism of properads $\mathsf{cBIB}_\infty \cong \mathsf{cDPois}_\infty$. 
\end{thm}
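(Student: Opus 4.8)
The plan is to deduce this from Theorem~\ref{prop: iso uBIB and uDPois} by arity-wise linear duality together with the functoriality of the cobar construction for partial coproperads.

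Recall that $\mathsf{cBIB}^\antishrieck$ and $\mathsf{cDPois}^\antishrieck$ are the arity-wise linear duals of $\mathsf{uBIB}^!$ and $\mathsf{uDPois}^!$, and that their partial decomposition maps $\Delta_{(1,1)}$ are, by construction, the transposes of the infinitesimal composition maps $\boxtimes_{(1,1)}$ of the latter two properads. Now take the mutually inverse properad isomorphisms $f\co\mathsf{uBIB}^!\to\mathsf{uDPois}^!$ and $g\co\mathsf{uDPois}^!\to\mathsf{uBIB}^!$ constructed in the proof of Theorem~\ref{prop: iso uBIB and uDPois}. Being morphisms of properads, they intertwine the composition products $\boxtimes$, hence in particular their two-vertex components $\boxtimes_{(1,1)}$. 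Transposing, one obtains degree-preserving, mutually inverse maps $f^{*}\co\mathsf{cDPois}^\antishrieck\to\mathsf{cBIB}^\antishrieck$ and $g^{*}\co\mathsf{cBIB}^\antishrieck\to\mathsf{cDPois}^\antishrieck$ that intertwine the partial decomposition maps $\Delta_{(1,1)}$ as well as the (here trivial) internal differentials; that is, $f^{*}$ is an isomorphism of partial coproperads in the sense of \cite[Definition 1.15]{Leray-Vallette}, with inverse $g^{*}$.

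Finally, the cobar construction $\mathsf{C}\mapsto\cobd{\mathsf{C}}=(\mathcal{G}(s^{-1}\mathsf{C}),\partial)$ is functorial in the partial coproperad $\mathsf{C}$: a morphism $\varphi\co\mathsf{C}\to\mathsf{C}'$ induces the morphism of quasi-free properads $\mathcal{G}(s^{-1}\varphi)$, and this commutes with the cobar differentials precisely because $\varphi$ is compatible with the internal differentials and the partial decomposition maps. Applying this to $f^{*}$, with inverse induced by $g^{*}$, yields the desired isomorphism of dg properads $\mathsf{cBIB}_\infty=\cobd{\mathsf{cBIB}^\antishrieck}\cong\cobd{\mathsf{cDPois}^\antishrieck}=\mathsf{cDPois}_\infty$. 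The only point to note is that an isomorphism of properads automatically respects the infinitesimal, and not merely the full, composition; this is immediate, so the real work has already been carried out in Theorem~\ref{prop: iso uBIB and uDPois} and no further computation is required.
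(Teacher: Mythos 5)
Your proposal is correct and follows exactly the paper's own argument: dualize the isomorphism $\mathsf{uBIB}^!\cong\mathsf{uDPois}^!$ of Theorem \ref{prop: iso uBIB and uDPois} to an isomorphism of partial coproperads, then invoke functoriality of the cobar construction. The extra detail you supply about transposing the infinitesimal compositions and the induced map $\mathcal{G}(s^{-1}\varphi)$ merely spells out what the paper leaves implicit.
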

\begin{proof}
	From the isomorphism of properads $\mathsf{uBIB}^!\cong \mathsf{uDPois}^!$ of Theorem \ref{prop: iso uBIB and uDPois}, one obtains an isomorphism of partial coproperads $\mathsf{cDPois}^\antishrieck \cong \mathsf{cBIB}^\antishrieck$. 
	Therefore, one has  an isomorphism between the cobar constructions $\cobd{\mathsf{cDPois}^\antishrieck }$ and $\cobd{\mathsf{cBIB}^\antishrieck }$. 
\end{proof}

Recall from \cite[Definition 1.40]{Leray-Vallette} that a structure of curved homotopy double Poisson gebra on a dg vector space $V$ is a Maurer--Cartan element in the convolution algebra 
\begin{equation*}
	\left( \widehat{\Hom}( \mathsf{cDPois}^\antishrieck, \End_V ), \star, \partial \right) ,
\end{equation*}
where $\End_V$ is the endomorphism properad of $V$. 
Here, since $\mathsf{cDPois}^\antishrieck$ is not coaugmented, it is the full $\Sigma$--bimodule that is considered, that is 
\begin{equation*}
	\widehat{\Hom}( \mathsf{cDPois}^\antishrieck, \End_V ):= 
	\prod_{m\geq 1,n\geq 0}  \Hom_{ \Sigma_m^{\text{op}}\times \Sigma_n } (\mathsf{cDPois}^\antishrieck(m,n), \End_V(m,n) ). 
\end{equation*}
This is mandatory if one wants to obtain the curvature for the corresponding representations. 
Observe that Maurer--Cartan elements in this convolution algebra are in bijection with morphisms of properads 
$ \cobd{\mathsf{cDPois}^\antishrieck}  \to \End_V$.  
A more detailed treatment of this case in the context of operads can be found in \cite[Section 3.3 and, in particular, Proposition 3.18]{DSV-book}. 

In conclusion, one has an identification between curved homotopy double Poisson gebras and \emph{curved homotopy balanced infinitesimal bialgebras} (defined similarly) given by the isomorphism of partial coproperads $\mathsf{cDPois}_\infty \cong \mathsf{cBIB}_\infty$. 

\begin{rem}
	Since $\mathsf{uDPois}^!$ and $\mathsf{uBIB}^!$	   are dioperads, the $\Sigma$--bimodules    $\mathsf{cDPois}^\antishrieck$ and $\mathsf{cBIB}^\antishrieck$ are, in fact, partial codioperads, that is, partial coproperads in which the partial decomposition maps only produce graphs of genus zero. 
\end{rem}

\section{The cobar construction of the Koszul dual coproperad of $\bib$}\label{sec: cobar}

To compute the cobar construction $\cobd{\bib^{\antishrieck}}$, we first embed the properad $\bib^!$ into $\mathsf{uBIB}^!\cong \mathsf{uDPois}^!$, which is more convenient. 
To do so, we start by recalling the combinatorial description of $\mathsf{uDPois}^!$ given in \cite{Leray-Vallette}. 
We reinterpret this description in  $\mathsf{uBIB}^!$ by using the isomorphism $\mathsf{uBIB}^!\cong \mathsf{uDPois}^!$.
This allows us to show that the canonical morphism $\iota\co \bib^!\to \mathsf{uBIB}^!$ is an inclusion (see Theorem  \ref{prop: iota inclusion}).

\subsection{A combinatorial description of $\mathsf{uDPois}^!$ and $\mathsf{uBIB}^!$}

We recall both a basis of the $\Sigma$--bimodule $\mathsf{uDPois}^!$ and the corresponding properadic composition in these terms, as given in \cite{Leray-Vallette}. 
Then, we write the corresponding basis of $\mathsf{uBIB}^!$ under the isomorphism $\mathsf{uDPois}^!\cong \mathsf{uBIB}^!$.  

\begin{rem}
	The description of  $\mathsf{uDPois}^!$ results from a description of $\mathsf{DPois}^!$ and from the inclusion  $\mathsf{DPois}^! \subset \mathsf{uDPois}^!$. 
	The latter is a consequence of an isomorphism $\mathsf{DPois}^!\cong \mathsf{DLie}^! \boxtimes \mathsf{Ass}^!$ and similarly for $\mathsf{uDPois}^!$ (but with $\mathsf{SuAss}^!$ instead of $\mathsf{Ass}^!$). 
	The properad $\bib^!$, however, does not enjoy similar properties; this essentially comes from the balanced condition of $\bib$. 
\end{rem}

Let us recall a basis and the corresponding composition maps of $\mathsf{uDPois}^!$  following \cite[Lemmata 1.33, 1.34 and 1.39]{Leray-Vallette}. 
As a graded vector space, $\mathsf{uDPois}^!(m,n)$ is concentrated in degree $1-n$ and generated by the cyclic words of labeled planar corollas of the following form 
\begin{equation*}
W_{a_1,\dots,a_m}^{J_1,\dots,J_m}:=
	\begin{tikzpicture}
		[scale=.4,auto=left,baseline=0.4cm]  
		\node (out1) at (-2,3)  {};
		\node (out11) at (-1.6,3)  {};
		\node (out12) at (0,3)  {};
		\node (out2) at (1,3) {};
		\node (out21) at (1.4,3) {};
		\node (out22) at (3,3) {};
		\node (out3) at (6,3) {};
		\node (out31) at (6.4,3) {};
		\node (out32) at (8,3) {};
		\node (n1) at (-1,1) [circblack] {};
		\node (n2) at (2,1) [circblack] {};
		\node (n3) at (7,1) [circblack] {};
		\node (in1) at (-1,-.5) {\scriptsize{$a_1$}};
		\node (in2) at (2,-.5) {\scriptsize{$a_2$}};
		\node (in3) at (7,-.5) {\scriptsize{$a_m$}};
		\node (dot1out) at (-.85,2.4) {\scriptsize{$\cdots$}};
		\node (dot2out) at (2.15,2.4) {\scriptsize{$\cdots$}};
		\node (dot3out) at (7.15,2.4) {\scriptsize{$\cdots$}};
		\node (dotintervert) at (4.5,1) {\scriptsize{$\cdots$}};
		\foreach \from/\to in {in1/n1,in2/n2,in3/n3,n1/out1,n1/out12,n1/out11,n2/out2,n2/out21,n2/out22,n3/out3,n3/out31,n3/out32}
		\draw[gray!40!black] (\from) -- (\to);
		\draw [decorate,decoration={zigzag,amplitude=1.3pt,segment length=.8mm}]    (n1) -- (n2);
		\draw [decorate,decoration={zigzag,amplitude=1.3pt,segment length=.8mm}]    (n2) -- (3.5,1);
		\draw [decorate,decoration={zigzag,amplitude=1.3pt,segment length=.8mm}]    (5.5,1) -- (n3);
		\draw [decorate,decoration={brace,amplitude=3pt},xshift=0pt,yshift=-10pt,thick] (-2,3.2) -- (0,3.2) node [above,midway] {\scriptsize{$J_1$}};
		\draw [decorate,decoration={brace,amplitude=3pt},xshift=0pt,yshift=-10pt,thick] (1,3.2) -- (3,3.2) node [above,midway] {\scriptsize{$J_2$}};
		\draw [decorate,decoration={brace,amplitude=3pt},xshift=0pt,yshift=-10pt,thick] (6,3.2) -- (8,3.2) node [above,midway] {\scriptsize{$J_m$}};
	\end{tikzpicture}
\end{equation*}
for $\{a_1,\dots,a_m\}=\{1,\dots,m\}$ and $J_1, J_2,\cdots, J_m$ ordered sets such that their underlying sets form a partition of $\{1,\dots,n\}$. 
The order on the set $J_i$ corresponds to the one induced by the planar structure of the corolla. 
Note that some labeling sets $J_i$ may be empty, in which case the corresponding corollas have no leaves. 
By \textit{cyclic} words, we mean words subject to the following relation: 
\begin{equation}\label{eq: cyclic relation}
	\begin{tikzpicture}
		[scale=.4,auto=left,baseline=0.4cm]  
		\node (out1) at (-2,3)  {};
		\node (out11) at (-1.6,3)  {};
		\node (out12) at (0,3)  {};
		\node (out2) at (1,3) {};
		\node (out21) at (1.4,3) {};
		\node (out22) at (3,3) {};
		\node (out3) at (6,3) {};
		\node (out31) at (6.4,3) {};
		\node (out32) at (8,3) {};
		\node (n1) at (-1,1) [circblack] {};
		\node (n2) at (2,1) [circblack] {};
		\node (n3) at (7,1) [circblack] {};
		\node (in1) at (-1,-.5) {\scriptsize{$a_1$}};
		\node (in2) at (2,-.5) {\scriptsize{$a_2$}};
		\node (in3) at (7,-.5) {\scriptsize{$a_m$}};
		\node (dot1out) at (-.85,2.4) {\scriptsize{$\cdots$}};
		\node (dot2out) at (2.15,2.4) {\scriptsize{$\cdots$}};
		\node (dot3out) at (7.15,2.4) {\scriptsize{$\cdots$}};
		\node (dotintervert) at (4.5,1) {\scriptsize{$\cdots$}};
		\foreach \from/\to in {in1/n1,in2/n2,in3/n3,n1/out1,n1/out12,n1/out11,n2/out2,n2/out21,n2/out22,n3/out3,n3/out31,n3/out32}
		\draw[gray!40!black] (\from) -- (\to);
		\draw [decorate,decoration={zigzag,amplitude=1.3pt,segment length=.8mm}]    (n1) -- (n2);
		\draw [decorate,decoration={zigzag,amplitude=1.3pt,segment length=.8mm}]    (n2) -- (3.5,1);
		\draw [decorate,decoration={zigzag,amplitude=1.3pt,segment length=.8mm}]    (5.5,1) -- (n3);
		\draw [decorate,decoration={brace,amplitude=3pt},xshift=0pt,yshift=-10pt,thick] (-2,3.2) -- (0,3.2) node [above,midway] {\scriptsize{$J_1$}};
		\draw [decorate,decoration={brace,amplitude=3pt},xshift=0pt,yshift=-10pt,thick] (1,3.2) -- (3,3.2) node [above,midway] {\scriptsize{$J_2$}};
		\draw [decorate,decoration={brace,amplitude=3pt},xshift=0pt,yshift=-10pt,thick] (6,3.2) -- (8,3.2) node [above,midway] {\scriptsize{$J_m$}};
	\end{tikzpicture}
	- (-1)^{j_1n+j_m}
	\begin{tikzpicture}
		[scale=.4,auto=left,baseline=0.4cm]  
		\node (out1) at (-2,3)  {};
		\node (out11) at (-1.6,3)  {};
		\node (out12) at (0,3)  {};
		\node (out2) at (1,3) {};
		\node (out21) at (1.4,3) {};
		\node (out22) at (3,3) {};
		\node (out3) at (6,3) {};
		\node (out31) at (6.4,3) {};
		\node (out32) at (8,3) {};
		\node (n1) at (-1,1) [circblack] {};
		\node (n2) at (2,1) [circblack] {};
		\node (n3) at (7,1) [circblack] {};
		\node (in1) at (-1,-.5) {\scriptsize{$a_{2}$}};
		\node (in2) at (2,-.5) {\scriptsize{$a_{3}$}};
		\node (in3) at (7,-.5) {\scriptsize{$a_{1}$}};
		\node (dot1out) at (-.85,2.4) {\scriptsize{$\cdots$}};
		\node (dot2out) at (2.15,2.4) {\scriptsize{$\cdots$}};
		\node (dot3out) at (7.15,2.4) {\scriptsize{$\cdots$}};
		\node (dotintervert) at (4.5,1) {\scriptsize{$\cdots$}};
		\foreach \from/\to in {in1/n1,in2/n2,in3/n3,n1/out1,n1/out12,n1/out11,n2/out2,n2/out21,n2/out22,n3/out3,n3/out31,n3/out32}
		\draw[gray!40!black] (\from) -- (\to);
		\draw [decorate,decoration={zigzag,amplitude=1.3pt,segment length=.8mm}]    (n1) -- (n2);
		\draw [decorate,decoration={zigzag,amplitude=1.3pt,segment length=.8mm}]    (n2) -- (3.5,1);
		\draw [decorate,decoration={zigzag,amplitude=1.3pt,segment length=.8mm}]    (5.5,1) -- (n3);
		\draw [decorate,decoration={brace,amplitude=3pt},xshift=0pt,yshift=-10pt,thick] (-2,3.2) -- (0,3.2) node [above,midway] {\scriptsize{$J_{2}$}};
		\draw [decorate,decoration={brace,amplitude=3pt},xshift=0pt,yshift=-10pt,thick] (1,3.2) -- (3,3.2) node [above,midway] {\scriptsize{$J_{3}$}};
		\draw [decorate,decoration={brace,amplitude=3pt},xshift=0pt,yshift=-10pt,thick] (6,3.2) -- (8,3.2) node [above,midway] {\scriptsize{$J_{1}$}};
	\end{tikzpicture},
\end{equation}
where $j_s:=|J_s|$ for $1\leq s\leq m$.

The cyclic word  $W_{a_1,\dots,a_m}^{J_1,\dots,J_m}$ with $J_i=\{\mu^1_i<\mu^2_i<\cdots<\mu^{j_i}_i\}$ 
 corresponds to the following element: 
\begin{equation*}
	\begin{tikzpicture}
		[scale=.25,auto=left,baseline=-0cm]  
		\node (out-1) at (-3,0)  {\scriptsize{$C^{J_1}$}};
		\node (out1) at (-1,3) {\scriptsize{$C^{J_2}$}};
		\node (outk-1) at (4,7)   {\scriptsize{$C^{J_{m-1}}$}};
		\node (outk) at (6,8.5)   {\scriptsize{$C^{J_m}$}};
		
		\coordinate (int2) at (1,2) {};
		\coordinate (intk-1) at (4,4) {};

		\node (nk-1) at (4,5) [circblack] {};
		\node (nk) at (6,5) [circblack] {};
		
		\node (n1) at (-1,1) [circblack] {};
		\node (n2) at (1,1) [circblack] {};
		\coordinate (int) at (-1,0) {};
		
		\node (n0) at (-1,-2) [circblack] {}; 
		\node (n-1) at (-3,-2) [circblack] {}; 
		\node (in-1) at (-3,-4) {\scriptsize{$a_1$}};
		\node (in1) at (-1,-4) {\scriptsize{$a_2$}};
		\node (in2) at (1,-1) {\scriptsize{$a_3$}};
		\node (ink) at (6,3) {\scriptsize{$a_m$}};
		\foreach \from/\to in {in-1/n-1,int/n1,in2/n2,n1/out1,n2/int2,n-1/out-1,in1/n0,n0/int,nk-1/outk-1,nk/outk,intk-1/nk-1,nk/ink}
		\draw[gray!40!black] (\from) -- (\to);
		\draw [decorate,decoration={zigzag,amplitude=0.9pt,segment length=.8mm}]    (n1) -- (n2);
		\draw [decorate,decoration={zigzag,amplitude=0.9pt,segment length=.8mm}]    (n-1) -- (n0);
		\draw [decorate,decoration={zigzag,amplitude=0.9pt,segment length=.8mm}]    (nk-1) -- (nk);
		\draw [dashed]    (int2) -- (intk-1);
	\end{tikzpicture}
\end{equation*}
where 
\begin{equation}\label{eq: Cji}
	C^{J_i} = 
\begin{tikzpicture}
	[scale=.3,auto=left,baseline=0.75cm]  
	\begin{scope}[xscale=-1]
		
		\node (out-1) at (-4.5,4.5) {\scriptsize{$\mu^{j_i}_i$}};
		\node (out0) at (-1.75,4.5) {\scriptsize{$\mu^{j_i-1}_i$}};
		\node (out1) at (1.5,4.5) {\scriptsize{$\mu^2_i$}};
		\node (out2) at (3,4.5) {\scriptsize{$\mu^1_i$}};

		\node (n0) at (-2.75,2.75) [colie] {};
		\node (n1) at (0,1.5) [colie] {};
		\node (n2) at (1,1) [colie] {};
		
		\node (int) at (-.5,2.75) {$\cdots$};
		\node (in1) at (1,0) {};
		\foreach \from/\to in {in1/n2,n1/n2,n1/n0,n1/out1,n2/out2,n0/out-1,n0/out0}
		\draw[gray!20!black] (\from) -- (\to);
	\end{scope}
\end{tikzpicture}
\text{ if } j_i\neq 0  \text{ and } 
C^{J_i} = 
\begin{tikzpicture}
	[scale=.35,auto=left,baseline=0.25cm]  
	\node (n1) at (1,1) [circblack] {};
	\node (out1) at (1,0)  {};
	
	\foreach \from/\to in {n1/out1}
	\draw[gray!20!black] (\from) -- (\to);
\end{tikzpicture}
\text{ if } j_i= 0. 
\end{equation}

\begin{notation} \label{notation}
	\begin{itemize}
		\item We may see the $J_i$'s as tuples, which allows us to concatenate them: for two tuples $J$ and $K$, we write their concatenation by $JK$. The empty tuple, which we denote by $\emptyset$, is the unit element.  
		\item 	For a partition $j_1+ \cdots +j_m= n$ with $j_i\geq 0$, we write $(j_k)$ for the tuple $(j_1+ \cdots +j_{k-1}+1,j_1+ \cdots +j_{k-1}+2,\dots,j_1+ \cdots +j_{k})$, and    
		$(0):= \emptyset$. 
		In particular, $W_{a_1,\dots,a_m}^{(j_1),\dots,(j_m)}$ denotes the word such that, for each $1\leq k\leq m$, the $k$--th planar corolla is labeled by $(j_{k})$. 
	\end{itemize}	
\end{notation}

The properadic composition is as follows. 
For $u\in J_{p}$, one has a splitting of the tuple $J_p$ in three parts: $J_p= J^{(1)}_p u J^{(2)}_p$. 
Let $W_J= W_{a_1,a_2,\dots,a_p}^{J_1,J_2,\dots,J_p}$ and $W_K= W_{b_1,b_2,\dots,b_q}^{K_1,K_2,\dots,K_q}$ be two words such that $J_p\ni u$. 

For $q=1$, one has  
\begin{equation*}%
	W_{a_1,a_2,\dots,a_p}^{J_1,J_2,\dots,J_p} \circ_{u,1} W_{1}^{K_1} 
	=(-1)^{j_p^{(2)}(k_1-1)} 
	W_{a_1,a_2,\dots,a_p}^{J'_1,J'_2,\dots,J'_{p-1},J^{(1)'}_pK'_{1}J_p^{(2)'}}. 
\end{equation*}
For $q\geq 2$, one has
\begin{equation}\label{eq: compo word}
	W_{a_1,a_2,\dots,a_p}^{J_1,J_2,\dots,J_p} \circ_{u,b_1} W_{b_1,b_2,\dots,b_q}^{K_1,K_2,\dots,K_q} 
	=(-1)^{\eta} 
	W_{a'_1,a'_2,\dots,a'_p,b'_2,\dots,b'_q}^{J'_1,J'_2,\dots,J'_{p-1},K'_{1}J_p^{(2)'},K'_2,\dots,K'_{q-1},J^{(1)'}_pK'_q},
\end{equation}
where 
\begin{equation}\label{eq: eta compo}
	\eta =  
	j_p^{(1)}(q-1)
	+j_p^{(1)}(  k_1 + ...+ k_{q-1} -q ) 
	+ j_p^{(1)}( j_p^{(2)} -1)
	+j_p^{(2)}
	+j_p^{(2)} ( k_1 -1). 
\end{equation} 
and the prime indices stand for the classical re-indexation (all the terms of $K_i$ are increased by $u-1$, and $j\in J_i$ is unchanged if $j<u$ and is increased by $|K_1|+ \cdots +|K_q|-1$ if $j>u$).  
Pictorially,  the right-sided element is 
\begin{equation*}
	\begin{tikzpicture}
		[scale=.45,auto=left,baseline=0.4cm]  
		\node (out-1) at (-7,3) {};
		\node (out-11) at (-8.6,3) {};
		\node (out-12) at (-9,3) {};
		\node (out0) at (-5,3)  {};
		\node (out01) at (-4.6,3)  {};
		\node (out02) at (-3,3)  {};
		\node (out1) at (-2.5,3)  {};
		\node (out11) at (-1.2,3)  {};
		\node (out111) at (-.8,3)  {};
		\node (out12) at (.5,3)  {};
		\node (out2) at (1,3) {};
		\node (out21) at (1.4,3) {};
		\node (out22) at (3,3) {};
		\node (out3) at (4.5,3) {};
		\node (out31) at (5.8,3) {};
		\node (out311) at (6.2,3) {};
		\node (out32) at (7.5,3) {};
		
		\node (n-1) at (-8,1) [circblack] {};
		\node (n0) at (-4,1) [circblack] {};
		\node (n1) at (-1,1) [circblack] {};
		\node (n2) at (2,1) [circblack] {};
		\node (n3) at (6,1) [circblack] {};
		
		\node (in-1) at (-8,-.5) {\scriptsize{$a'_1$}};
		\node (in0) at (-4,-.5) {\scriptsize{$a'_{p-1}$}};
		\node (in1) at (-1,-.5) {\scriptsize{$a'_p$}};
		\node (in2) at (2,-.5) {\scriptsize{$b'_2$}};
		\node (in3) at (6,-.5) {\scriptsize{$b'_{q}$}};
		\node (dot-1out) at (-7.85,2.4) {\scriptsize{$\cdots$}};
		\node (dot0out) at (-3.85,2.4) {\scriptsize{$\cdots$}};
		\node (dot1out) at (-1.6,2.4) {\scriptsize{$\cdots$}};
		\node (dot11out) at (-.4,2.4) {\scriptsize{$\cdots$}};
		\node (dot2out) at (2.15,2.4) {\scriptsize{$\cdots$}};
		\node (dot3out) at (5.4,2.4) {\scriptsize{$\cdots$}};
		\node (dot31out) at (6.6,2.4) {\scriptsize{$\cdots$}};
		
		\node (dotintervert) at (-6.25,1) {\scriptsize{$\cdots$}};
		\node (dotintervert) at (4.25,1) {\scriptsize{$\cdots$}};
		\foreach \from/\to in {in-1/n-1,in0/n0,in1/n1,in2/n2,in3/n3,n-1/out-1,n-1/out-12,n-1/out-11,n0/out0,n0/out02,n0/out01,n1/out1,n1/out111,n1/out12,n1/out11,n2/out2,n2/out21,n2/out22,n3/out3,n3/out31,n3/out311,n3/out32}
		\draw[gray!40!black] (\from) -- (\to);
		\draw [decorate,decoration={zigzag,amplitude=1.3pt,segment length=.8mm}]    (n-1) -- (-7,1);
		\draw [decorate,decoration={zigzag,amplitude=1.3pt,segment length=.8mm}]    (-5.5,1) -- (n0);
		\draw [decorate,decoration={zigzag,amplitude=1.3pt,segment length=.8mm}]    (n0) -- (n1);
		\draw [decorate,decoration={zigzag,amplitude=1.3pt,segment length=.8mm}]    (n1) -- (n2);
		\draw [decorate,decoration={zigzag,amplitude=1.3pt,segment length=.8mm}]    (n2) -- (3.5,1);
		\draw [decorate,decoration={zigzag,amplitude=1.3pt,segment length=.8mm}]    (5,1) -- (n3);
		\draw [decorate,decoration={brace,amplitude=3pt},xshift=0pt,yshift=-10pt,thick] (-9,3.2) -- (-7,3.2) node [above,midway] {\scriptsize{$J'_1$}};
		\draw [decorate,decoration={brace,amplitude=3pt},xshift=0pt,yshift=-10pt,thick] (-5,3.2) -- (-3,3.2) node [above,midway] {\scriptsize{$J'_{p-1}$}};
		\draw [decorate,decoration={brace,amplitude=3pt},xshift=0pt,yshift=-10pt,thick] (-2.5,3.2) -- (-1.1,3.2) node [above,midway] {\scriptsize{$K'_1$}};
		\draw [decorate,decoration={brace,amplitude=3pt},xshift=0pt,yshift=-10pt,thick] (-.9,3.2) -- (0.5,3.2) node [above,midway] {\scriptsize{$J^{(2)'}_p$}};
		\draw [decorate,decoration={brace,amplitude=3pt},xshift=0pt,yshift=-10pt,thick] (1,3.2) -- (3,3.2) node [above,midway] {\scriptsize{$K'_2$}};
		\draw [decorate,decoration={brace,amplitude=3pt},xshift=0pt,yshift=-10pt,thick] (4.5,3.2) -- (5.9,3.2) node [above,midway] {\scriptsize{$J^{(1)'}_{p}$}};
		\draw [decorate,decoration={brace,amplitude=3pt},xshift=0pt,yshift=-10pt,thick] (6.1,3.2) -- (7.5,3.2) node [above,midway] {\scriptsize{$K'_{q}$}};
	\end{tikzpicture}.
\end{equation*}

\begin{rem}
	In particular, one has: $W^{(j)}_1\circ_{u,1} W^{(k)}_1 =(-1)^{(j-u)(k-1)}  W^{(j+k-1)}_1$ for all $1\leq u \leq j$ and $k>0$. 
\end{rem}

	Since the words are subject to the cyclic relation \eqref{eq: cyclic relation}, the assignment \eqref{eq: compo word} defines all the infinitesimal compositions. 
	In turn, they produce the composition maps along trees of the properad $\mathsf{uDPois}^!$. 
	On the other hand, in \cite{Leray-Vallette}, it is proven that  the composition maps of the properad $\mathsf{uDPois}^!$ along graphs of positive genus are trivial, that is, it is a dioperad. 
	Therefore, we have defined the whole properadic structure. \\

Let $W_{a_1,\dots,a_m}^{J_1,\dots,J_m}$ be a cyclic word with $J_i=\{\mu^1_i<\mu^2_i<\cdots<\mu^{j_i}_i\}$, and let $j_i:=|J_i|$.  
Under the isomorphism $\mathsf{uDPois}^!\cong \mathsf{uBIB}^!$, if $J_m\neq \emptyset$, then the element corresponding to the class of $W_{a_1,\dots,a_m}^{J_1,\dots,J_m}$  is the class of the level tree 
	\begin{equation}\label{eq: image of word}
		BW_{a_1,\dots,a_m}^{J_1,\dots,J_m}:=
	\begin{tikzpicture}
		[scale=.3,auto=left,baseline=1cm]  
		\begin{scope}
			\node[above,yshift=-0.1cm] (out1k) at (6,8.5)  {\scriptsize{$C^{J_{m-1}}$}};
			\node (n0k) at (7.5,10.5) [] {\scriptsize{$C^{J_m}$}};
			
			\coordinate (itout1k) at (6,7.5)  {};
			\coordinate (itk) at (8.25,6.75)  {};
			
			\node (n1k) at (7.5,6) [colie] {};
			\node (n2k) at (7.5,7.5) [prelie] {};	
			
			\node[below,yshift=0.1cm] (in2k) at (7.5,5) {\scriptsize{$m$}};

			\node[above,yshift=-0.1cm] (out1) at (1,4)  {\scriptsize{$C^{J_{2}}$}};
			
			\coordinate (itout1) at (1,3)  {};
			\coordinate (it) at (3.25,2.25)  {};

			\node (n1) at (2.5,1.5) [colie] {};
			\node (n2) at (2.5,3) [prelie] {};
			\node (n3) at (-.5,0) [prelie] {};
			\node (nU) at (-2,1.1) [] {\scriptsize{$C^{J_1}$}};
			\node (nb1) at (-.5,-1.5) [colie] {};
			
			\coordinate (itoutb1) at (-2,0)  {};
			\coordinate (itb) at (.25,-.75)  {};
			
			\node[below,yshift=0.1cm] (in0) at (-2.75,-2) {\scriptsize{$1$}};	
			\node[below,yshift=0.1cm] (in1) at (-.5,-2.5) {\scriptsize{$2$}};
			\node[below,yshift=0.1cm] (in2) at (2.5,.5) {\scriptsize{$3$}};	
			
			\foreach \from/\to in {in0/n3,in2/n1,in1/nb1,n1/itout1,n2/n3,n1/itout1,itout1/out1,n1/it,it/n2,nb1/itoutb1,nb1/itb,itb/n3,n1k/in2k,n2k/n0k,
				n1k/itout1k,itout1k/out1k,n1k/itk,itk/n2k,nb1/itoutb1,nb1/itb,itb/n3,nU/itoutb1}
			\draw[gray!20!black] (\from) -- (\to);
			\draw [dashed] (n2) -- (n2k);
		\end{scope}
	\end{tikzpicture}
\end{equation}
where  $C^{J_i}$ is as in \eqref{eq: Cji}.

For instance, one has the following correspondence
\begin{equation*}
\begin{tikzpicture}
	[scale=.3,auto=left,baseline=0.15cm]  
	\begin{scope}
		\node[above,yshift=-0.1cm] (out1) at (1,3.5)  {\scriptsize{$1$}};
		
		\node[above,yshift=-0.1cm] (out2) at (1.75,4.75)  {\scriptsize{$2$}};
		\node[above,yshift=-0.1cm] (out3) at (3.25,4.75)  {\scriptsize{$3$}};
		
		\coordinate (itout1) at (1,2.5)  {};
		\coordinate (it) at (3.25,1.75)  {};
		
		\node (n0) at (2.5,3.5) [colie] {};
		\node (n1) at (2.5,1) [colie] {};
		\node (n2) at (2.5,2.5) [prelie] {};
		\node (n3) at (0,0) [prelie] {};
		\node (nU) at (-1.5,.5) [lie] {};
		\node (nb1) at (0,-1.5) [colie] {};
		
		\coordinate (itoutb1) at (-1.5,0)  {};
		\coordinate (itb) at (.75,-.75)  {};
		\coordinate (itin3) at (4.5,2.5) {};
		
		\node[below,yshift=0.1cm] (in0) at (-2.25,-2) {\scriptsize{$1$}};	
		\node[below,yshift=0.1cm] (in1) at (0,-2.5) {\scriptsize{$2$}};
		\node[below,yshift=0.1cm] (in2) at (2.5,-.5) {\scriptsize{$3$}};	
		
		\foreach \from/\to in {in0/n3,in2/n1,in1/nb1,n1/itout1,n2/n3,n1/itout1,itout1/out1,n1/it,it/n2,n2/n0,n0/out2,n0/out3, nb1/itoutb1,nb1/itb,itb/n3,nU/itoutb1}
		\draw[gray!20!black] (\from) -- (\to);
		
		\draw (-4,1) node {} -- (n1) [dashed,gray!70!black] node {};
		\draw (-4,0) node {} -- (n3) [dashed,gray!70!black] node {};
		\draw (-4,.5) node {} -- (nU) [dashed,gray!70!black] node {};
	\end{scope}
\end{tikzpicture}
	\hspace{1cm}\longleftrightarrow \hspace{1cm}
	\begin{tikzpicture}
		[scale=.3,auto=left,baseline=0.2cm]  
		\begin{scope}[yscale=1,xscale=-1]
			\node (out1) at (-2,3)  {\scriptsize{$3$}};
			\node (out12) at (0,3)  {\scriptsize{$2$}};
			\node (out2) at (2,3) {\scriptsize{$1$}};
			\node (n1) at (-1,1) [circblack] {};
			\node (n2) at (2,1) [circblack] {};
			\node (n3) at (5,1) [circblack] {};
			\node (in1) at (-1,-1) {\scriptsize{$3$}};
			\node (in2) at (2,-1) {\scriptsize{$2$}};
			\node (in3) at (5,-1) {\scriptsize{$1$}};
			\foreach \from/\to in {in1/n1,in2/n2,n1/out1,n1/out12,n2/out2,in3/n3}
			\draw[gray!40!black] (\from) -- (\to);
			\draw [decorate,decoration={zigzag,amplitude=1pt,segment length=.8mm}]    (n1) -- (n2);
			\draw [decorate,decoration={zigzag,amplitude=1pt,segment length=.8mm}]    (n3) -- (n2);
		\end{scope}
	\end{tikzpicture}.
\end{equation*}
Here we have added dashed lines to emphasize the level of the vertices, which is essential for sign issues.

Observe that, in $\mathsf{uBIB}^!$, the element \eqref{eq: image of word} is equal to 
	\begin{equation}\label{eq: image of word 2}
			rBW_{a_1,\dots,a_m}^{J_1,\dots,J_m}:=
		(-1)^{n(0)}
	\scalebox{.85}{
		\begin{tikzpicture}
			[scale=.45,auto=left,baseline=0cm]  
			\begin{scope}[xscale=-1,yscale=1]
				\node (bin1) at (-5,5) {{$\hat{C}^{J_m}$}};
				\node (bin2) at (-0.25,3.75) { {$\hat{C}^{J_{m-1}}$}};
				\node (bint1) at (2.75,.75) { {$\hat{C}^{J_{2}}$}};
				\node (bint2) at (5.75,-2.25) { {$\hat{C}^{J_{1}}$}};
				
				\node (b2) at (-2,2) [] {$B'_{m-1}$};
				\node (bt1) at (1,-1) [] {$B'_{2}$};
				\node (bt2) at (4,-4) [] {$B'_{1}$};
				
				\node (bout2) at (-3.5,0.5) { {$a_m$}};
				
				\node (boutt1) at  (-.5,-2.5)  {$a_3$};
				\node (boutt2) at (2.5,-5.5) { {$a_2$}};
				\node (boutt3) at (5.5,-5.5) { {$a_1$}};
				
				\node (in1) at (1,0) {};
				\foreach \from/\to in {boutt3/bt2,boutt1/bt1, bout2/b2,boutt2/bt2,b2/bin1,b2/bin2,bt1/bint1,bt2/bint2,bt1/bt2}
				\draw[gray!20!black] (\from) -- (\to);
				\draw (b2) node {} -- (bt1) [dashed] node {};
			\end{scope}
		\end{tikzpicture}
	}
\end{equation}
where, if $j_i\neq 0$, then 
\begin{equation*}
	B_i=
		\begin{tikzpicture}
		[scale=.3,auto=left,baseline=0.45cm]  
		\begin{scope}[yscale=1,xscale=-1]
			\node[below,yshift=0.1cm] (in1) at (1,0) {};
			\node[below,yshift=0.1cm] (in2) at (2.5,0) {};
			\coordinate (itin2) at (2.5,1) {};
			\node (n1) at (1,1) [colie] {};
			\node (n2) at (1,2.5) [prelie] {};;
			\coordinate (itout1) at (0.25,1.75)  {};
			\coordinate (itout2) at (2.5,2.5)  {};
			\node[above,yshift=-0.1cm] (out1) at (1,3.5)  {};
			\node[above,yshift=-0.1cm] (out2) at (2.5,3.5)  {};
			
			\foreach \from/\to in {in1/n1,n1/itout1,itout1/n2,n2/out1,in2/itin2,itin2/n2,n1/itout2,itout2/out2}
			\draw[gray!20!black] (\from) -- (\to);
		\end{scope}
	\end{tikzpicture}
	\hspace{.25cm}
	\text{ and } 
	\hspace{.25cm}
	\hat{C}^{J_i}=C^{J_1},
\end{equation*}
 but if $j_i=0$, then  
\begin{equation*}
	B'_i = 	\begin{tikzpicture}
		[scale=.35,auto=left,baseline=0.25cm]  
		\node[below,yshift=0.1cm] (in1) at (0,0) {};
		\node[below,yshift=0.1cm] (in2) at (2,0) {};
		\node (n1) at (1,1) [prelie] {};
		\node (out1) at (1,2)  {};
		
		\foreach \from/\to in {in1/n1,in2/n1,n1/out1}
		\draw[gray!20!black] (\from) -- (\to);
	\end{tikzpicture}
	\end{equation*}
	and both $\hat{C}^{J_i}$ and its output edge are omitted.  
The sign is given by the number $n(0)$ of $j_i$'s that are zero. 

For instance, 
\begin{equation*}
\begin{tikzpicture}
	[scale=.3,auto=left,baseline=0.15cm]  
	\begin{scope}
		\node[above,yshift=-0.1cm] (out1) at (1,3.5)  {\scriptsize{$1$}};
		
		\node[above,yshift=-0.1cm] (out2) at (1.75,4.75)  {\scriptsize{$2$}};
		\node[above,yshift=-0.1cm] (out3) at (3.25,4.75)  {\scriptsize{$3$}};
		
		\coordinate (itout1) at (1,2.5)  {};
		\coordinate (it) at (3.25,1.75)  {};
		
		\node (n0) at (2.5,3.5) [colie] {};
		\node (n1) at (2.5,1) [colie] {};
		\node (n2) at (2.5,2.5) [prelie] {};
		\node (n3) at (0,0) [prelie] {};
		\node (nU) at (-1.5,.5) [lie] {};
		\node (nb1) at (0,-1.5) [colie] {};
		
		\coordinate (itoutb1) at (-1.5,0)  {};
		\coordinate (itb) at (.75,-.75)  {};
		\coordinate (itin3) at (4.5,2.5) {};
		
		\node[below,yshift=0.1cm] (in0) at (-2.25,-2) {\scriptsize{$1$}};	
		\node[below,yshift=0.1cm] (in1) at (0,-2.5) {\scriptsize{$2$}};
		\node[below,yshift=0.1cm] (in2) at (2.5,-.5) {\scriptsize{$3$}};	
		
		\foreach \from/\to in {in0/n3,in2/n1,in1/nb1,n1/itout1,n2/n3,n1/itout1,itout1/out1,n1/it,it/n2,n2/n0,n0/out2,n0/out3, nb1/itoutb1,nb1/itb,itb/n3,nU/itoutb1}
		\draw[gray!20!black] (\from) -- (\to);
		
	\end{scope}
\end{tikzpicture}
	= -
\begin{tikzpicture}
	[scale=.3,auto=left,baseline=0.15cm]  
	\begin{scope}
		\node[above,yshift=-0.1cm] (out1) at (1,3.5)  {\scriptsize{$1$}};
		
		\node[above,yshift=-0.1cm] (out2) at (1.75,4.75)  {\scriptsize{$2$}};
		\node[above,yshift=-0.1cm] (out3) at (3.25,4.75)  {\scriptsize{$3$}};
		
		\coordinate (itout1) at (1,2.5)  {};
		\coordinate (it) at (3.25,1.75)  {};
		
		\node (n0) at (2.5,3.5) [colie] {};
		\node (n1) at (2.5,1) [colie] {};
		\node (n2) at (2.5,2.5) [prelie] {};
		\node (n3) at (0,0) [prelie] {};
		
		\coordinate (itin3) at (4.5,2.5) {};
		
		\node[below,yshift=0.1cm] (in0) at (-1,-1) {\scriptsize{$1$}};	
		\node[below,yshift=0.1cm] (in1) at (1,-1) {\scriptsize{$2$}};
		\node[below,yshift=0.1cm] (in2) at (2.5,-1) {\scriptsize{$3$}};	
		
		\foreach \from/\to in {in0/n3,in2/n1,in1/n3,n3/n2,n1/itout1,n2/n3,n1/itout1,itout1/out1,n1/it,it/n2,n2/n0,n0/out2,n0/out3}
		\draw[gray!20!black] (\from) -- (\to);
	\end{scope}
\end{tikzpicture}.
\end{equation*}
\begin{defn}
	The level trees of the form \eqref{eq: image of word 2} are called \emph{reduced}.
\end{defn}

Consider the sub $\Sigma$--bimodule of $\mathsf{uBIB}^!$ given by 
\begin{equation*}
	\mathsf{uBIB}_{>0}^!(m,n) 
	:=
	\begin{cases}
			\mathsf{uBIB}^!(m,n)    & \text{ if } n>0, \\
			0 				    	& \text{ if } n=0.
	\end{cases}
\end{equation*}
\begin{lem}
The sub $\Sigma$--bimodule	$\mathsf{uBIB}_{>0}^!$ is a sub-properad of $\mathsf{uBIB}^!$. 
\end{lem}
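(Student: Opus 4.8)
The plan is to verify the two defining properties of a sub-properad of $\mathsf{uBIB}^!$: that $\mathsf{uBIB}_{>0}^!$ contains the unit $\mathsf{I}$, and that it is closed under the composition maps $\gamma_g$ indexed by the connected directed graphs $g$. The first property is immediate, since $\mathsf{I}$ is concentrated in biarity $(1,1)$, and $\mathsf{uBIB}_{>0}^!(1,1)=\mathsf{uBIB}^!(1,1)$ because $1>0$; it is also clear from the definition that $\mathsf{uBIB}_{>0}^!$ is stable under the $\Sigma$--bimodule structure.

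For closure under composition, I would use the following elementary bookkeeping. If a graph $g$ has internal vertices $v_1,\dots,v_k$ decorated by homogeneous elements $q_i$ of biarity $(m_i,n_i)$ and has $e$ internal edges, then the external input legs of $\gamma_g(q_1,\dots,q_k)$ are precisely the input slots of the $v_i$ that are not attached to an internal edge, so that the composite has $\sum_i n_i-e$ inputs. Now assume all $q_i$ lie in $\mathsf{uBIB}_{>0}^!$; then $n_i\geq 1$ for each $i$, since a nonzero element of $\mathsf{uBIB}_{>0}^!$ has at least one input by definition. Because $g$ is a finite nonempty graph which is directed and acyclic, it admits a source vertex, say $v_1$---one none of whose input slots carries an internal edge. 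All $n_1\geq 1$ of its input slots therefore survive as external input legs of $\gamma_g(q_1,\dots,q_k)$, so the composite has positive input biarity and hence lies in $\mathsf{uBIB}_{>0}^!$. This is all that is needed.

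If one prefers an argument closer to the combinatorics of this section, one can instead transport everything through the isomorphism $\mathsf{uBIB}^!\cong\mathsf{uDPois}^!$: then $\mathsf{uBIB}_{>0}^!$ corresponds to the span of the words $W_{a_1,\dots,a_m}^{J_1,\dots,J_m}$ for which at least one $J_i$ is nonempty, and one checks, by tracking biarities through the composition formula \eqref{eq: compo word} (noting that a composition $W_J\circ_{u,b_1}W_K$ is only defined when $J_p\ni u$, so that $W_J$ already has an input), that having a nonempty $J_i$ is preserved under the infinitesimal compositions, which generate the whole dioperadic structure of $\mathsf{uBIB}^!$. I do not expect any real obstacle here; the only slightly delicate point is simply recalling that the graphs underlying a properad are acyclic, so that a source vertex always exists. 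The same bookkeeping also shows that no element of $\mathsf{uBIB}^!(m,0)$ can be obtained from $\mathsf{uBIB}_{>0}^!$ by composition, which is why this sub-properad will be useful below.
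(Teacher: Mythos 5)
Your argument is correct and is essentially the paper's own justification made explicit: the paper simply says the statement "follows directly from the definition of a properad," and your source-vertex/acyclicity bookkeeping is exactly the verification that composites of elements with at least one input again have at least one input, so no component of biarity $(m,0)$ can be produced. (The paper also notes a second route, via the subsequent lemma identifying $\mathsf{uBIB}_{>0}^!$ with the image of the properad morphism $\iota\co \bib^!\to\mathsf{uBIB}^!$, which is close in spirit to your alternative sketch through the word combinatorics but does not require tracking the composition formula.)
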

\begin{proof}
	This follows directly from the definition of a properad. 
	It also follows from the following lemma. 
\end{proof}

Let  $\iota\co  \bib^! \to \mathsf{uBIB}^!$ be the canonical morphism of properads. 
 
\begin{lem}
The sub $\Sigma$--bimodule $\mathsf{uBIB}_{>0}^!$ is equal to the image of $\iota$.
\end{lem}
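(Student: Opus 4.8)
The plan is to show the two inclusions $\operatorname{im}(\iota) \subseteq \mathsf{uBIB}_{>0}^!$ and $\mathsf{uBIB}_{>0}^! \subseteq \operatorname{im}(\iota)$ separately. The first is immediate: $\bib^!$ is generated by $\E_{\mathsf{BIB}^!}$, whose only nonzero components are $\E_{\mathsf{BIB}^!}(1,2)$ and $\E_{\mathsf{BIB}^!}(2,1)$, and $\iota$ sends these generators to the corresponding generators of $\mathsf{uBIB}^!$, none of which is the counit $\mathsf{U}(1,0)$. Hence every element of $\operatorname{im}(\iota)$ is a composite of crossed and blue vertices only, and such a composite has at least one input whenever it is a tree with at least one vertex (and $\bib^!(1,1)=\mathsf{I}$ maps to $\mathsf{I} \subseteq \mathsf{uBIB}_{>0}^!$). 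So $\operatorname{im}(\iota)(m,0)=0$, giving the first containment.

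For the reverse containment, the key is the combinatorial basis of $\mathsf{uBIB}^! \cong \mathsf{uDPois}^!$ recalled above: a basis of $\mathsf{uBIB}^!(m,n)$ is given by (classes of) the reduced level trees $rBW_{a_1,\dots,a_m}^{J_1,\dots,J_m}$ of \eqref{eq: image of word 2}, indexed by cyclic words of labeled planar corollas with $\{a_1,\dots,a_m\}=\{1,\dots,m\}$ and $J_1,\dots,J_m$ partitioning $\{1,\dots,n\}$. When $n>0$, at least one $J_i$ is nonempty, so at least one corolla $C^{J_i}$ is of the first type in \eqref{eq: Cji} (a comb of crossed vertices, i.e.\ the element $C^k_{\begin{tikzpicture}\node (n1) at (0,0) [colie] {};\end{tikzpicture}}$), and hence carries an input edge. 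I would then argue that such a reduced level tree, built out of the $B_i$'s (composites of a blue and a crossed vertex), the $B'_i$'s (a single blue vertex), and the combs $\hat C^{J_i}$, is manifestly in the image of $\iota$ \emph{provided} no isolated counit $\begin{tikzpicture}[scale=.35]\node (n1) at (1,1) [circblack] {};\node (out1) at (1,0)  {};\foreach \from/\to in {n1/out1}\draw[gray!20!black] (\from) -- (\to);\end{tikzpicture}$ appears. Precisely: since $n>0$, there is at least one $J_i \neq \emptyset$; the corollas $\hat C^{J_i}$ with $j_i = 0$ are, by construction of \eqref{eq: image of word 2}, \emph{omitted} together with their output edges, and replaced by the pre-Lie (blue) vertices $B'_i$. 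Thus the reduced tree contains only blue vertices and crossed vertices — precisely the generators in $\operatorname{im}(\iota)$ — so $rBW_{a_1,\dots,a_m}^{J_1,\dots,J_m} \in \operatorname{im}(\iota)$.

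I would phrase this last step carefully as a spanning argument: the classes of the reduced level trees with $n>0$ span $\mathsf{uBIB}_{>0}^!$ (by the combinatorial description), each such class lies in $\operatorname{im}(\iota)$ by the explicit form of \eqref{eq: image of word 2}, and $\operatorname{im}(\iota)$ is a sub-$\Sigma$-bimodule, so $\mathsf{uBIB}_{>0}^! \subseteq \operatorname{im}(\iota)$. Combined with the first containment this proves equality, and incidentally re-proves the previous lemma that $\mathsf{uBIB}_{>0}^!$ is a sub-properad, since $\operatorname{im}(\iota)$ is automatically one.

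The main obstacle I anticipate is the bookkeeping needed to be sure that \emph{no} counit vertex survives in \eqref{eq: image of word 2} when $n>0$: one must check against the construction that whenever $j_i=0$ the corolla $\hat C^{J_i}$ is genuinely removed (not merely relabeled) and the resulting tree is still a well-formed level tree decorated solely by blue and crossed generators, with the claimed sign $(-1)^{n(0)}$ not introducing anything outside $\operatorname{im}(\iota)$. This amounts to unwinding the definition of $rBW$ rather than any substantive new computation; once the description from \cite{Leray-Vallette} and the isomorphism $\mathsf{uBIB}^!\cong\mathsf{uDPois}^!$ of Theorem \ref{prop: iso uBIB and uDPois} are in hand, the argument is essentially a matter of reading off generators from pictures.
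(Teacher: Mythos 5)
Your proposal is correct and follows essentially the same route as the paper: the containment $\operatorname{im}(\iota)\subseteq\mathsf{uBIB}_{>0}^!$ because $\bib^!(m,0)=0$, and the reverse containment because every cyclic word with $n>0$ admits a reduced representative $rBW$ (after cyclic rotation so that $J_m\neq\emptyset$) built solely from blue and crossed vertices, hence lying in the image. Your extra bookkeeping about the omitted counit corollas is just an unwinding of the definition of the reduced form and matches the paper's one-line justification.
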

\begin{proof}
	On the one hand, if $Im(\iota)$ denotes the image of $\iota$, one has $Im(\iota) \subset \mathsf{uBIB}_{>0}^!$ since all components $\bib^!(m,0)$ are trivial. 
	On the other hand, $\iota$ is surjective on $\mathsf{uBIB}_{>0}^!$ since any cyclic word has a reduced representative. 
\end{proof}

\subsection{A combinatorial description of $\bib^!$}

Let 
\begin{equation*}
	\redd\co  \mathsf{uBIB}_{>0}^! \to \bib^!
\end{equation*}
be the reduction map: 
for any cyclic word take a representative of the form $BW_{a_1,\dots,a_m}^{J_1,\dots,J_m}$ such that $J_m\neq \emptyset$ (this is always possible in $\mathsf{uBIB}_{>0}^!$). 
Define 
\begin{equation*}
	\redd(BW_{a_1,\dots,a_m}^{J_1,\dots,J_m}):= rBW_{a_1,\dots,a_m}^{J_1,\dots,J_m}. 
\end{equation*} 
The next three Lemmata \ref{lem: red well-defined}, \ref{lem: compo genus} and \ref{lem: red morphism} claim that  $\redd$ is  well-defined and a morphism of properads. 

\begin{thm}\label{prop: iota inclusion}
	The canonical morphism $\iota\co  \bib^! \to \mathsf{uBIB}^!$ is an inclusion. 
\end{thm}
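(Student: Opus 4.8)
The plan is to exhibit an explicit left inverse of $\iota$, namely the reduction map $\redd$. First, by Lemmata \ref{lem: red well-defined}, \ref{lem: compo genus} and \ref{lem: red morphism}, the map $\redd\co \mathsf{uBIB}_{>0}^!\to \bib^!$ is a well-defined morphism of properads. Since $\bib^!(m,0)=0$ for all $m$, the image of $\iota$ is contained in the sub-properad $\mathsf{uBIB}_{>0}^!$, so $\iota$ corestricts to a morphism of properads $\bib^!\to \mathsf{uBIB}_{>0}^!$ and the composite $\redd\circ\iota\co \bib^!\to\bib^!$ is a morphism of properads. The goal is to prove $\redd\circ\iota=\id_{\bib^!}$; injectivity of $\iota$ then follows immediately, since $\iota(x)=0$ forces $x=\redd(\iota(x))=0$, and an injective morphism of properads is an inclusion.

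To prove $\redd\circ\iota=\id_{\bib^!}$ it is enough to check that the two morphisms agree on generators. Indeed $\bib^!=\mathcal{G}(s^{-1}\mathsf{E}^*)/(s^{-2}\mathsf{R}^{\perp})$ is a quotient of a free properad, hence any morphism of properads out of $\bib^!$ is determined by its restriction to the generating $\Sigma$-bimodule $s^{-1}\mathsf{E}^*$, which is spanned by the ``colie'' corolla in arity $(1,2)$ and the ``prelie'' corolla in arity $(2,1)$. As $\iota$ is the canonical morphism, it sends each of these corollas to the identically named generator of $\mathsf{uBIB}^!$, so what remains is to compute $\redd$ on the colie corolla and on the prelie corolla and to recognise the corollas themselves.

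This is a direct unwinding of the combinatorial description: under the identification $\mathsf{uBIB}^!\cong\mathsf{uDPois}^!$ the colie corolla is the class of $BW^{\{1,2\}}_{1}$ and the prelie corolla is the class of $BW^{\emptyset,\{1\}}_{1,2}$, and in both cases one may choose a representative whose last labelling block is non-empty, so $\redd$ is computed by $\redd(BW)=rBW$ together with formula \eqref{eq: image of word 2}. For the colie corolla this yields $C^{\{1,2\}}$, which by \eqref{eq: Cji} is a single colie vertex; for the prelie corolla the unique empty block contributes the sign $(-1)^{n(0)}=-1$ and, after substituting the prelie vertex $B'_1$ for that block, one recovers the prelie corolla, exactly as in the example displayed right after \eqref{eq: image of word 2}. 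Hence $\redd\circ\iota$ and $\id_{\bib^!}$ agree on generators, so $\redd\circ\iota=\id_{\bib^!}$, and $\iota$ is an inclusion.

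The only delicate point is the Koszul-sign bookkeeping in the last computation, i.e.\ matching $\redd$ of the prelie corolla with the prelie vertex on the nose; the rest of the argument is formal, resting on the three preceding lemmata and on the universal property of the quadratic properad $\bib^!$. As a byproduct, since $\iota$ is also surjective onto $\mathsf{uBIB}_{>0}^!$, the maps $\iota$ and $\redd$ are mutually inverse isomorphisms $\bib^!\cong\mathsf{uBIB}_{>0}^!$.
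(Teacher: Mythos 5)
Your proposal is correct and follows essentially the same route as the paper: granting Lemmata \ref{lem: red well-defined}, \ref{lem: compo genus} and \ref{lem: red morphism}, one checks $\redd(\iota(x))=x$ on the two generators and concludes $\redd\circ\iota=\id$ by the presentation of $\bib^!$, hence injectivity of $\iota$. Your extra observations (corestriction to $\mathsf{uBIB}^!_{>0}$ and the resulting isomorphism $\bib^!\cong\mathsf{uBIB}^!_{>0}$) are consistent with the paper's surrounding lemmas and do not change the argument.
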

\begin{proof}%
	Supposing Lemmata \ref{lem: red well-defined}, \ref{lem: compo genus} and \ref{lem: red morphism} hold true, we show that the morphism $\iota$ is injective:  
	It is easy to see that for each (of the two) generator $x\in \bib^!$, one has $\redd(\iota(x))=x$. 
	Since $\redd$ and $\iota$ are morphisms of properads, it follows that $\redd\circ \iota=id$. 
\end{proof}

\begin{rem}
	In the recent book \cite{DSV-book}, V. Dotsenko, S. Shadrin and B. Vallette defined the notion of \emph{extendable}  operad, which essentially refers to any operad that can be embedded into a non-trivial unital extension of it. 
	Given a Koszul quadratic operad $\mathsf{P}$, the category of homotopy $\mathsf{P}$--algebras is called \emph{twistable} if the Koszul dual operad $\mathsf{P}^!$ is extendable. 
	This means that homotopy $\mathsf{P}$--algebras admits an interesting curved extension; see Definition 4.24 and the discussion that follows it in \textit{loc. cit.} 
	Theorem \ref{prop: iota inclusion} says that $\bib^!$ is an extendable properad. 
	It would be interesting to understand the twisting procedure for homotopy $\bib$--algebras. 
\end{rem}

\begin{cor}
	The composition maps of the properad $\bib^!$ along the graphs of positive genus are trivial \ie it is a dioperad. 
\end{cor}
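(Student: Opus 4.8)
The plan is to obtain the statement as an immediate consequence of Theorem~\ref{prop: iota inclusion} together with the already-established fact that $\mathsf{uDPois}^!$ is a dioperad; no new computation should be needed.

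First I would recall that, by Theorem~\ref{prop: iso uBIB and uDPois}, there is an isomorphism of properads $\mathsf{uBIB}^!\cong\mathsf{uDPois}^!$, and that $\mathsf{uDPois}^!$ is a dioperad (proved in \cite{Leray-Vallette}, and noted again in the remark following the equivalence $\mathsf{cBIB}_\infty\cong\mathsf{cDPois}_\infty$): all of its composition maps indexed by connected graphs of positive genus vanish. Transporting this property along the isomorphism, the same holds for $\mathsf{uBIB}^!$: every composition of $\mathsf{uBIB}^!$ along a positive-genus graph is zero.

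Next I would invoke Theorem~\ref{prop: iota inclusion}, which says that the canonical morphism $\iota\co\bib^!\to\mathsf{uBIB}^!$ is injective. Since $\iota$ is a morphism of properads it intertwines the composition maps indexed by any connected graph $g$; in particular, for a positive-genus $g$, composing the $g$-composition of $\bib^!$ with $\iota$ agrees with the $g$-composition of $\mathsf{uBIB}^!$ applied to the $\iota$-images of the decorating elements, and the latter is $0$ by the previous paragraph. Injectivity of $\iota$ then forces the $g$-composition of $\bib^!$ to be $0$. Letting $g$ range over all connected graphs of positive genus yields the corollary.

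I do not anticipate any real obstacle: all the substantive content is already packaged in Theorems~\ref{prop: iso uBIB and uDPois} and~\ref{prop: iota inclusion}, and the only thing left to observe is the elementary fact that ``being a dioperad'' (i.e. vanishing of positive-genus compositions) is inherited by any sub-properad — here the sub-properad $\mathsf{uBIB}_{>0}^!\cong\bib^!$ of $\mathsf{uBIB}^!$.
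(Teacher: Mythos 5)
Your argument is correct and is essentially identical to the paper's own proof: both deduce the vanishing of positive-genus compositions in $\bib^!$ from the injectivity of $\iota\co\bib^!\to\mathsf{uBIB}^!$ (Theorem \ref{prop: iota inclusion}) together with the fact that $\mathsf{uBIB}^!\cong\mathsf{uDPois}^!$ is a dioperad. Nothing further is needed.
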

\begin{proof}
It follows from the fact that $\mathsf{uDPois}^!\cong \mathsf{uBIB}^!$ is a dioperad and that the morphism $\iota$ is an inclusion.
\end{proof}
\begin{rem}
	This property implies that the decomposition maps of the Koszul dual coproperad $\bib^\antishrieck$ produces only graphs of genus zero \ie it is a codioperad. 
	In particular, the differential of the cobar construction $\cobd{\bib^\antishrieck}$ decomposes (the dual of) any  reduced cyclic word into sums of reduced cyclic words related by one edge. 
\end{rem}

The rest of this subsection is devoted to the Lemmata \ref{lem: red well-defined}, \ref{lem: compo genus} and \ref{lem: red morphism}.  

\begin{lem}\label{lem: red well-defined}
	The map $\redd$ is well-defined. 
\end{lem}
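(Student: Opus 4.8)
The plan is to show that the value $\redd(BW_{a_1,\dots,a_m}^{J_1,\dots,J_m}) := rBW_{a_1,\dots,a_m}^{J_1,\dots,J_m}$ does not depend on the choices made in producing the input, namely: (i) the choice of a representative $BW_{a_1,\dots,a_m}^{J_1,\dots,J_m}$ with $J_m\neq\emptyset$ of a given cyclic word class, and (ii) the fact that the target $rBW$ is a well-defined element of $\bib^!$ (i.e.\ it is insensitive to the same choices, once mapped back into $\bib^!$). The two sources of ambiguity are the cyclic relation \eqref{eq: cyclic relation} itself — which moves one corolla from the end to the front — and, when several of the $J_i$ at the end are empty, the freedom in which nonempty block is placed last. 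I would isolate these as two independent moves and check $\redd$ is invariant under each.

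First I would handle the elementary cyclic move under the assumption that it takes a representative with $J_m\neq\emptyset$ to another representative with $J'_{m'}\neq\emptyset$. Writing out the right-hand side $rBW$ in the form \eqref{eq: image of word 2} — a left comb of the little trees $B'_i$ (a $\begin{tikzpicture}[scale=.3]\node (n1) at (0,0) [colie] {};\end{tikzpicture}$ over a $\begin{tikzpicture}[scale=.3]\node (n1) at (0,0) [prelie] {};\end{tikzpicture}$ when $J_i\neq\emptyset$, a bare $\begin{tikzpicture}[scale=.3]\node (n1) at (0,0) [prelie] {};\end{tikzpicture}$ when $J_i=\emptyset$) decorated by the cocombs $\hat C^{J_i}$ — the effect of cyclically permuting the blocks is to slide the bottom-most $B'$-tree up past the rest of the comb. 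The key input is Lemma \ref{lem: red well-defined} is to be proven \emph{using} the two technical identities \eqref{M1}: identity a) is exactly the move that commutes a $\begin{tikzpicture}[scale=.3]\node (n1) at (0,0) [prelie] {};\end{tikzpicture}$-vertex fed by a $\begin{tikzpicture}[scale=.3]\node (n1) at (0,0) [colie] {};\end{tikzpicture}$ past another $\begin{tikzpicture}[scale=.3]\node (n1) at (0,0) [prelie] {};\end{tikzpicture}$-vertex, and identity b) is the symmetric one; between them they let me transport the moved block from the top of the comb down to the bottom (or vice versa), and identity c) supplies the sign $(-1)^{\degd}$ that must match the sign $(-1)^{j_1 n + j_m}$ appearing in \eqref{eq: cyclic relation} once $\degd=-1$ is substituted and the Koszul signs from the suspensions are accounted for. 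So the core of the argument is: expand both $rBW$'s as level trees in $\bib^!$, and produce an explicit chain of applications of the relations \eqref{rel Koszul 1}, \eqref{rel Koszul 2}, \eqref{rel Koszul 2b} and the identities \eqref{M1} transforming one into the other, tracking signs.

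Second I would deal with the case in which, after a cyclic rotation, the new last block is empty; here one cannot directly read off $rBW$ and must first rotate further to bring a nonempty block to the end. The point is that the composition of two elementary rotations is again governed by the cocycle condition built into \eqref{eq: cyclic relation} (the exponents $j_1 n + j_m$ add up correctly over a full cycle), so it suffices to check invariance under a single elementary rotation that does produce a nonempty last block, together with the observation that \emph{any} cyclic class with at least one nonempty block can be reached from \emph{any} such representative by a sequence of such moves. I would phrase this as: the set of "good" representatives (last block nonempty) is connected under elementary rotations that stay within good representatives, except possibly when only one block is nonempty, a case which is rotation-invariant on the nose.

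The main obstacle I anticipate is the sign bookkeeping: the identities \eqref{M1} are stated in $\bibl$ for general $\degd$, but the reduction target lives in $\bib = \bib^{-1}$ with an extra suspension hidden in the passage $\mathsf{uBIB}^! = \mathcal{G}(s^{-1}\mathsf{E}^*)/(\cdots)$, and the cyclic relation \eqref{eq: cyclic relation} was written on the $\mathsf{uDPois}^!$ side. Threading the isomorphism $\mathsf{uBIB}^!\cong\mathsf{uDPois}^!$ of Theorem \ref{prop: iso uBIB and uDPois} through, while keeping the Koszul signs from reordering outputs of the cocombs $\hat C^{J_i}$ and inputs of the combs $B'_i$ consistent, is where the real work is; the combinatorics of \emph{which} relations to apply is comparatively routine once the pattern of \eqref{M1}a)–c) is recognized. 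I would therefore organize the proof as a single explicit computation for one elementary rotation, done carefully with all signs, and then reduce the general case to it by the connectivity remark above.
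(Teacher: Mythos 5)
Your core computational mechanism is the right one and matches the paper's: expand both reduced trees as level trees in $\bib^!$ and transform one into the other by the relations \eqref{rel Koszul 1}, \eqref{rel Koszul 2} and the identities \eqref{M1}, tracking the Koszul signs. The gap is in your reduction of the general case to elementary rotations. You claim that the set of good representatives (last block nonempty) is connected under elementary rotations that stay within good representatives, the only exception being a single nonempty block. That is false: take $m=4$ and $(j_1,j_2,j_3,j_4)=(1,0,1,0)$. The good representatives of this cyclic class are the rotations by $1$ and by $3$, and any chain of elementary rotations joining them passes through a representative whose last block is empty, on which $\redd$ is not defined. Consequently, invariance under elementary rotations between good representatives does not imply the well-definedness you need, and the ``cocycle'' additivity of the exponents in \eqref{eq: cyclic relation} cannot be chained, because the intermediate reduced values simply do not exist.

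The paper avoids this by proving directly, for any $s$ with $j_s\neq 0$ (and $j_m\neq 0$), the identity $rBW_{1,\dots,m}^{(j_1),\dots,(j_m)} =(-1)^{(j_1+ \cdots +j_s)n+ j_1+\cdots+j_{s-1}+j_m}\, rBW_{s+1,\dots,m,1,\dots,s}^{(j_{s+1}),\dots,(j_m),(j_1),\dots,(j_s)}$, i.e.\ invariance under rotation by an \emph{arbitrary} amount landing on a good representative; this covers all pairs of good representatives at once. That argument is more involved than the elementary case you sketch: after reducing to $j_i\le 1$, one moves the decorating corollas off the spine, applies the first relation of \eqref{rel Koszul 2} once, iterates \eqref{M1} a) and b) to exchange the two blocks of the spine on either side of position $s$, and then performs a nontrivial sign count (matching the parity of $([s-1]_J-1)([m-1]_J-[s]_J-1)$ against the cyclic-relation sign). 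Your proposal correctly identifies \eqref{M1} and the sign bookkeeping as the crux, but as structured it does not prove the statement actually required; you would need to redo the computation for a general rotation rather than rely on the connectivity claim.
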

\begin{proof}
	Recall that the cyclic words $BW_{a_1,\dots,a_m}^{J_1,\dots,J_m}$  form a basis of  $\mathsf{uBIB}_{>0}^!(m,n)$. 
	Therefore, what we have to show is that the reduction is compatible with the cyclic relation. 
	That is, let us show that for all cyclic word $BW_{1,\dots,m}^{(j_1),\dots,(j_m)}$  such that $j_m,j_s\neq 0$, one has  
	\begin{equation*}
		rBW_{1,\dots,m}^{(j_1),\dots,(j_m)} =(-1)^{(j_1+ \cdots +j_s)n+ j_1+..+j_{s-1}+j_m} rBW_{s+1,\dots,m,1,\dots,s}^{(j_{s+1}),\dots,(j_m),(j_1),\dots,(j_s)}
	\end{equation*} 
	in $\bib^!(m,n)$. 
	
	In fact, we will show this for the $j_i$'s being at most 1; the general case easily follows. 
	As a technical notation, for all $1\leq i\leq m$, let us set $[i]_J$ to be the cardinal of $\{ 1\leq k\leq i ~|~ j_k\neq 0 \}$. 
	Note that, up to the sign $(-1)^{n(0)}$ (which is not relevant here),   $rBW_{1,\dots,m}^{(j_1),\dots,(j_m)}$ is the class of the following level tree
	\begin{equation*}
		\scalebox{1}{
			\begin{tikzpicture}
				[scale=.4,auto=left,baseline=-3cm]  
				\begin{scope}[xscale=1,yscale=1]
					\node[above,yshift=-0.1cm] (out1) at (1,3.5)  {\scriptsize{$[s]_J$}};
					\node[above,yshift=-0.1cm] (outn) at (8,9)  {\scriptsize{$[m]_J$}};
					
					\coordinate (itout1) at (1,2.5)  {};
					
					\coordinate (itn) at (8.75,7.25) {};
					\coordinate (its1) at (5.75,4.25) {};					
					\coordinate (it) at (3.25,1.75)  {};
					\coordinate (it1s) at (.75,-.75)  {};
					\coordinate (it1) at (-3.25,-4.75)  {};
					
					\node (ndn) [draw] at (8,6) {\scriptsize{$D_{m-1}$}};
					\node (nds1) [draw] at (5,3) {\scriptsize{$D_{s+1}$}};
					\node (nds) at (2.5,1) [colie] {};
					\node (nd1s) [draw] at (0,-2) {\scriptsize{$D_{s-1}$}};
					\node (nd1) [draw] at (-4,-6) {\scriptsize{$D_1$}};
					
					\node (nn) at (8,8) [prelie] {};
					\node (ns1) at (5,5) [prelie] {};
					\node (ns) at (2.5,2.5) [prelie] {};
					\node (n1s) at (0,0) [prelie] {};
					\node (n1) at (-4,-4) [prelie] {};
					\coordinate (itin1) at (4.5,2.5) {};
					
					\node[below,yshift=0.1cm] (inn) at (8,4.5) {\scriptsize{$m$}};	
					\node[below,yshift=0.1cm] (ins1) at (5,1.5) {\scriptsize{$s+2$}};	
					\node[below,yshift=0.1cm] (ins) at (2.5,0) {\scriptsize{$s+1$}};	
					\node[below,yshift=0.1cm] (in1s) at (0,-3.5) {\scriptsize{$s$}};	
					\node[below,yshift=0.1cm] (in1) at ((-4,-7.5) {\scriptsize{$2$}};	
					\node[below,yshift=0.1cm] (in0) at (-5.5,-5.25) {\scriptsize{$1$}};	
					
					\foreach \from/\to in {in1/nd1,in1s/nd1s,ins1/nds1,inn/ndn,
						in0/n1,nd1/it1,nd1s/it1s,ins/nds,nds1/its1,ndn/itn,
						it1/n1,it1s/n1s,its1/ns1,itn/nn,
						n1s/ns,ns/ns1,nds/itout1,ns/n1s,nds/itout1,itout1/out1,nds/it,it/ns,nn/outn}
					\draw[gray!20!black] (\from) -- (\to);
					
					\draw (ns1) node {} -- (nn) [dashed] node {};
					\draw (n1) node {} -- (n1s) [dashed] node {};
				\end{scope}		
			\end{tikzpicture}
		}
	\end{equation*}
	where the boxes labeled by $D_i$ are either  
	$\begin{tikzpicture}
		[scale=.35,auto=left,baseline=0.35cm]  
		\node (in1) at (1,0) {};
		\node (n2) at (1,1) [colie] {};
		\node[above,yshift=-0.1cm] (out1) at (0,2) {\scriptsize{$[i]_J$}};
		\node[above,yshift=-0.1cm] (out2) at (2,2) {};
		
		\foreach \from/\to in {in1/n2,n2/out1,n2/out2}
		\draw[gray!20!black] (\from) -- (\to);
	\end{tikzpicture}$ if $j_i\neq 0$ or an edge if $j_i=0$. 
	We proceed in 6 steps.
	\begin{enumerate}
		\item We move down all the boxes $D_i$'s to place them below all the blue vertices, keeping their relative height.   
		In the upper part, one has therefore only blue vertices and the crossed vertex with output labeled by $s+1$. 
		Let us represent the first $s-1$ blue vertices by a block $D$, and the last blue $m-s-1$ vertices by  a block $U$, letting alone the $s$-th blue vertex and the crossed vertex. 
		The upper part is thus represented by the level tree on the left in the following equality: 
		\begin{equation*}
			\begin{tikzpicture}
				[scale=.4,auto=left,baseline=.5cm]  
				\begin{scope}[xscale=1,yscale=1]
					\node[above,yshift=-0.1cm] (out1) at (1,3.5)  {\scriptsize{$[s]_J$}};
					\node[above,yshift=-0.1cm] (outn) at (5,6.5)  {\scriptsize{$[m]_J$}};
					\coordinate (itout1) at (1,2.5)  {};
					\coordinate (it) at (3.25,1.75)  {};
					
					\node (nds) at (2.5,1) [colie] {};
					
					\node (ns1)  [draw] at (5,5) [fill=blue!50] {U};
					\node (ns) at  (2.5,2.5) [prelie] {};
					\node (n1s) [draw]   at (0,0) [fill=blue!50] {D};
					
					\coordinate (itin1) at (4.5,2.5) {};
					
					\node[below,yshift=0.1cm] (inn) at (7,3.5) {\scriptsize{$m$}};	
					\node[below,yshift=0.1cm] () at (6,3.5) {\scriptsize{$...$}};	
					\node[below,yshift=0.1cm] (ins1) at (4.5,3.5) {\scriptsize{$s+2$}};	
					\node[below,yshift=0.1cm] (ins) at (2.5,0) {\scriptsize{$s+1$}};	
					\node[below,yshift=0.1cm] (in1s) at (1,-1.5) {\scriptsize{$s$}};	
					\node[below,yshift=0.1cm] () at (0,-1.5) {\scriptsize{$...$}};	
					\node[below,yshift=0.1cm] (in0) at (-1,-1.5) {\scriptsize{$1$}};	
					
					\foreach \from/\to in {in1s/n1s,ins1/ns1,ins/nds,in0/n1s,inn/ns1,
						n1s/ns,ns/ns1,nds/itout1,ns/n1s,nds/itout1,itout1/out1,nds/it,it/ns,
						ns1/outn}
					\draw[gray!20!black] (\from) -- (\to);
				\end{scope}		
			\end{tikzpicture}
			\stackrel{\text{Step 2}}{=} (-1)^{1}
			\begin{tikzpicture}
				[scale=.4,auto=left,baseline=.5cm]  
				\begin{scope}[xscale=1,yscale=1]
					
					\node[above,yshift=-0.1cm] (outn) at (-1,7.5)  {\scriptsize{$[m]_J$}};
					\node (ns1)  [draw] at (-1,6) [fill=blue!50] {U};
					\node[below,yshift=0.1cm] (inn) at (1.75,4.5) {\scriptsize{$m$}};	
					\node[below,yshift=0.1cm] (ins1) at (0,4.5) {\scriptsize{$s+2$}};	
					
					\node[above,yshift=-0.1cm] (out1) at (2.5,3.5)  {\scriptsize{$[s]_J$}};
					\coordinate (itout1) at (-2,4.5)  {};							
					\coordinate (it) at (3.25,1.75)  {};
					\node (nds) at (2.5,1) [colie] {};
					\node (ns) at  (2.5,2.5) [prelie] {};

					\node (n1s) [draw]   at (2.5,-1) [fill=blue!50] {D};				
					\node[below,yshift=0.1cm] (ins) at (-1,-1) {\scriptsize{$s+1$}};	
					\node[below,yshift=0.1cm] (in1s) at (3.5,-2.5) {\scriptsize{$s$}};	
					\node[below,yshift=0.1cm] () at (2.5,-2.5) {\scriptsize{$...$}};	
					\node[below,yshift=0.1cm] (in0) at (1.5,-2.5) {\scriptsize{$1$}};	
					
					\foreach \from/\to in {in1s/n1s,ins1/ns1,ins/ns,in0/n1s,inn/ns1,
						n1s/nds,nds/itout1,nds/itout1,itout1/ns1,nds/it,it/ns,
						ns/out1,ns1/outn}
					\draw[gray!20!black] (\from) -- (\to);
				\end{scope}		
			\end{tikzpicture}
		\end{equation*} 
		The boxes $D_1,\dots,D_{m-1}$ (not represented in the above picture) are linked to the outputs $2,\dots,s,s+2,\dots,m-1$ respectively. 
		\item We use the first relation of \eqref{rel Koszul 2}.
		\item We use relation \eqref{M1} a) iteratively in order to ``move" the block $U$  below the block $B_s$. 
		\item We exchange the height of the block $U$ with the block $D$. 
		\item We use relation \eqref{M1} b) iteratively in order to ``move" the block $D$ above the block $B_s$.  
		\item We move back the $D_i$'s. 
	\end{enumerate}
		Note that if $m=2$, in which case there is no $D$ nor $U$, the proof amounts to the sole use of \eqref{rel Koszul 2}. 
		
	The sign is obtained as follows: 
	First recall that $\lambda=-1$, so the blue vertex has degree $0$ and the crossed vertex has degree $-1$. 
	Let us split the boxes $D_i$'s in two blocks: $A$ stands for the block gathering $D_1,\dots,D_{s-1}$ and $B$ stands for $D_{s+1},\dots,D_{m-1}$. 
	In step 1, the unique movement that produces a sign is that $B$ passes through the level of $B_s$ (which has degree one, since it has exactly one crossed vertex). 
	Therefore this step produces the sign $(-1)^{B}$. 
	In Step 6, signs are produced by the fact that heights of $A$ and $B$ are exchanged and by the fact  that $A$ passes through the level of $B_s$. 
	Therefore, one has a sign $(-1)^{AB+ A}$. 
	In conclusion, Step 1 and Step 6 give the sign $(-1)^{AB + A+B} = (-1)^{(A-1)(B-1) +1} 
	= (-1)^{([s-1]_J-1)([m-1]_J-[s]_J-1) +1} $. 
	\\
	Step 2 to 5 gives $(-1)^{ 1}$ if $s<m$.
	
	The final sign is then given by the parity of $([s-1]_J-1)([m-1]_J-[s]_J-1)$. 
	If, among the $j_i$'s, $r$ of them are zero, $r_1$ being between $1$ and $s-1$, and $r_2$ being between $s+1$ and $m-1$, then, modulo 2, one has 
	 \begin{multline*}
	 	([s-1]_J-1)([m-1]_J-[s]_J-1) 
	 	=(s-1-r_1-1)(m-1-r-s-r_1-1) 
	 	= (s-r_1)(m-r_2-s)\\
	 	= (s-r_1)(m-r_2) + (1-r_1)s.
	 \end{multline*} 
	 On the other hand, modulo 2, one has 
	 \begin{multline*}
	(j_1+ \cdots +j_s)n+ j_1+..+j_{s-1}+j_m 
	= (s-r_1)(m-r) + s-1-r_1+1
	= (s-r_1)(m-r) + s-1-r_1+1 \\
	=(s-r_1)(m-r_2) + (s-r_1)r_1+  s-r_1
	= (s-r_1)(m-r_2) + s(r_1-1). 	
	 \end{multline*}
\end{proof}

\begin{lem}\label{lem: compo genus}
	 In $\bib^!$, any infinitesimal composition of reduced cyclic words along graphs of positive genus is zero. 
\end{lem}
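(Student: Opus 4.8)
The plan is to reduce the statement, via the relations defining $\bib^!$, to Lemma~\ref{lem: pencil box}. Let $u,v\in\bib^!$ be reduced cyclic words and $\gamma$ a two-vertex graph of positive genus, i.e.\ whose two vertices are joined by $k\ge 2$ edges; set $w=u\circ_\gamma v$. Expanding $u$ and $v$ as composites of the two generators of $\bib^!$ (the colie generator, of arity $(1,2)$, and the prelie generator, of arity $(2,1)$), the element $w$ is represented by a connected graph $G$ of genus $k-1\ge 1$ built from colie's and prelie's; since $u$ and $v$ are chains of combs and blocks as in \eqref{eq: image of word 2}, $G$ is obtained from two trees by gluing $k$ edges, so it has an independent cycle. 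Fix one such cycle $\Gamma$ (it uses two of the connecting edges and a path inside each of the two trees). The goal is to show $G=0$ in $\bib^!$.

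First I would bring $\Gamma$ into a normal form, following exactly the style of direct inspection used in the proofs of Lemma~\ref{lem: red well-defined} and of Lemma~\ref{lem: pencil box} themselves. Using the associativity of the colie generator and the coassociativity of the prelie generator (the first two relations of \eqref{rel Koszul 1}), merge each maximal run of same-type vertices lying on $\Gamma$ into a single comb; then $\Gamma$ becomes a cyclic alternation of colie-combs and prelie-combs, say $t$ of each, with the parts of $u$ and $v$ not meeting $\Gamma$ grafted onto the remaining legs. Next, using the relations of \eqref{rel Koszul 1} and of \eqref{rel Koszul 2} (together with the derived relation \eqref{rel Koszul 2b}) — which transpose an adjacent colie and prelie along $\Gamma$, at the cost of a sign and possibly a twist — I would move the combs around $\Gamma$ until all the colie's are consecutive and all the prelie's are consecutive, reducing to $t=1$; any auxiliary terms created on the way are treated the same way (they have a strictly shorter cycle, or are already of the shape below), so the reduction is by induction on $t$, equivalently on the number of generators occurring in $G$.

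When $t=1$, the cycle $\Gamma$ consists of a single colie-comb and a single prelie-comb joined by two edges, with trees grafted on the free legs; by definition this is precisely a pencil box, or a twisted pencil box — the four types a)--d), and their twisted variants, correspond to which pair of legs of the two combs is used by $\Gamma$, the twist recording the use of \eqref{rel Koszul 2}. By Lemma~\ref{lem: pencil box} the pencil box (resp.\ the twisted pencil box) is zero in $\bib^!$, and since composition in a quotient properad sends any composite having a zero factor to zero, we conclude $G=0$. Hence every positive-genus infinitesimal composition of reduced cyclic words vanishes.

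The main obstacle is the combinatorial-and-sign bookkeeping in the normalization: classifying the ways $\Gamma$ can sit inside $G$, matching these against the pencil-box and twisted pencil-box patterns, and tracking signs through the (co)associativity and transposition steps — the same level-tree sign analysis as in the proof of Lemma~\ref{lem: red well-defined}, carried out here with $\lambda=-1$, so that the colie generator has degree $-1$ and the prelie generator has degree $0$. Once this reduction to pencil boxes is in place, the conclusion is immediate from Lemma~\ref{lem: pencil box}.
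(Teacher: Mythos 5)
Your proposal is correct in outline and takes essentially the same route as the paper: use (co)associativity together with the mixed relations of \eqref{rel Koszul 1} and \eqref{rel Koszul 2} to massage a positive-genus composite of reduced cyclic words until it contains a (twisted) pencil box, then invoke Lemma \ref{lem: pencil box}. The organizational difference is that the paper does not run an induction on the number $t$ of comb alternations around the chosen cycle; it first normalizes the two words and the attaching maps (moving the inert blocks $B_i$, $\hat{C}^{K_i}$ out of the way and using the cyclic relation established in Lemma \ref{lem: red well-defined}), which caps the alternation at the four explicit tree shapes treated in its second Claim, and only then performs the moves. Your inductive step --- that the transposition relations always allow gathering the colie-combs and the prelie-combs around the cycle --- is asserted rather than verified, and this is precisely where the paper's case analysis (moving the $j_k-1$ crossed vertices outside the box via the third and last relations of \eqref{rel Koszul 1}, then applying (co)associativity) does the actual work; so the plan is sound, but the bookkeeping you defer is the substance of the paper's proof.
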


 \begin{proof}
	We will prove the statement for compositions along graphs of genus one, the general case easily follows. 
	In other words, we have to show the following Claim \ref{claim 0}. 
	
	\begin{claim}\label{claim 0}
		For all reduced cyclic words $rBW_1\in \bib^!(p,n)$ and $rBW_2\in \bib^!(q,n')$, 
		and for all injections $\alpha\co \{1,2 \}\to \{1,\dots, n\}$ and  $\beta\co \{1,2\}\to \{1,\dots, q\}$, one has 
		\begin{equation*}
			rBW_1 \circ_{\alpha,\beta} rBW_2=0. 
		\end{equation*}
	\end{claim}
	In fact, this follows from the following Claim \ref{claim 1}. 	
	
	\begin{claim} \label{claim 1}
	Let $rBW_1:= rBW_{1,\dots,p}^{(j_1),(0),\dots,(0),(j_p)}$ and $rBW_2:= rBW_{1,\dots,q}^{(1),(0),\dots,(0),(1)}$ be two reduced cyclic words in $\mathsf{BIB}^!$  such that $j_1,j_p\neq 0$. %
	For any injective maps 	$\alpha\co \{1,2 \}\to \{1,\dots,j_1+j_p\}$ and  $\beta\co \{1,2\}\to \{1,\dots,q\}$ such that  
	$\alpha(1)\in \{1,\dots,j_1\}$ and $\alpha(2)\in \{j_1+1,\dots,j_1+j_p\}$ (it is $\{1,...,j_1\}$ if $p=1$),  
	$\beta(1)\in \{1,2\}$ and $\beta(2)=q$, one has $rBW_1 \circ_{\alpha,\beta} rBW_2=0$. 
	\end{claim}
	\begin{proof}[Proof of Claim \ref{claim 0} supposing Claim \ref{claim 1}]
	Consider $rBW_1, rBW_2, \alpha$ and $\beta$ as in Claim \ref{claim 0}; write $rBW_2$ as $rBW_{1,...,q}^{K_1,...,K_q}$. 
	Suppose $\beta(1)<\beta(2)$ (this is always possible since there is no condition on $\alpha$). 
	
	First, let us see that $rBW_2$ and $\beta$ can be supposed as in Claim \ref{claim 1}. 
	If $\beta(1)=k>2$ then the part of $rBW_2$ that is made up of $B_i$ and $\hat{C}^{K_i}$ for $1\leq i\leq k-1$  can be moved below $rBW_1$; therefore it does not interfere with the result. 
	Therefore, one may suppose $\beta(1)\in \{1,2\}$. 
	Similarly, one may suppose $\beta(2)=q$. 
	Finally, it remains to show that one can suppose  $rBW_2$ of the form $rBW_{1,\dots,q}^{(1),(0),\dots,(0),(1)}$. 
	This is clear since for each $2\leq i\leq k-1$ such that $K_i\neq \emptyset$, the subtree of $rBW_2$ made up of the crossed vertices of $B_i$ and $\hat{C}^{K_i}$ can be moved below $rBW_1$. 
	
	Now, let us see that $rBW_1$ and $\alpha$ can be supposed as in Claim \ref{claim 1}. 
	Write  $rBW_1$ as $rBW_{1,...p}^{J_1,...,J_p}$. 
	\begin{itemize}
		\item If $p=1$, then there is nothing to do. 
		\item Suppose $p\geq 2$.	
		\begin{itemize}
		\item If $\alpha(1)$ and $\alpha(2)$ belong to the same $J_k$, then one can actually suppose $p=1$. 
		Indeed, for the two maximal subtrees of  $rBW_1$ that are not  $\hat{C}^{J_k}$, one has: one is below  $\hat{C}^{J_k}$, and the other can be moved above $rBW_2$.  
		\item If $\alpha(1)$ and $\alpha(2)$ belong to two different $J_i's$, then, by using the cyclic property on $rBW_1$ (Lemma \ref{lem: red well-defined}), one may suppose $\alpha(1)<\alpha(2)$ and $\alpha(2)\in J_p$. 
		As before, one may suppose that $\alpha(1)\in J_1$. 
		\end{itemize}
	\end{itemize}
	\end{proof}
	
	\begin{proof}[Proof of Claim \ref{claim 1}]
	The general idea to prove this claim consists in using elementary moves (that is, the relations of \eqref{rel Koszul 1} and \eqref{rel Koszul 2}) to obtain a (twisted or not) pencil box, which by Lemma \ref{lem: pencil box} is zero. 
	One may distinguish two cases: 
	\begin{enumerate}
		\item If $p=1$, then $j_p=j_1$, and therefore $\alpha(2)\in\{1,\dots,j_1\}$. 
		By using the associativity of the crossed vertices, one obtains a pencil box, which is a twisted one if $\alpha(2)<\alpha(1)$. 
		\item If $p\geq 2$, one obtains a tree of one of the following forms, in which the red and green dashed lines represent the two possibilities $\beta(1)=1$ or $\beta(1)=2$:   
		\begin{equation*}
			a) \begin{tikzpicture}
				[scale=.45,auto=left,baseline=-1cm]  
				\begin{scope}[xscale=1,yscale=1]
					
					\node (in) at  (0,1) [] {};
					\node (n4) at  (0,0) [prelie] {};
					\node (n3) at  (-.5,-.5) [prelie] {};
					\node (n2) at  (-1,-1) [prelie] {};
					\node (n1) at  (-2,-2) [prelie] {};
					
					\coordinate (od1) at  (-1.5,-2.5);
					\node (od2) at  (0,-2) [] {};
					\node (od3) at  (.5,-1.5) [] {};
					
					\coordinate (intL) at (-2.5,-2.5);
					\coordinate (ndin) at (2.5,-2.5);
					\node (nd4) at  (3.5,-1.5) [colie] {};
					\node (nd3) at  (3,-2) [colie] {};
					\node (nd2) at  (2.5,-2.5) [colie] {};
					\node (nd1) at  (2,-3) [colie] {};
					\node (nd0c) at  (1,-4) [colie] {};
					
					\node (v) at  (2.85,-2.85) [] {\scriptsize{$v$}};
					
					\node (idb) at  (-.5,-3.5) [] {};
					\node (idc) at  (0,-3) [] {};
					\node (id1) at  (1,-2) [] {};
					\node (id3) at  (2,-1) [] {};
					\node (id4) at  (2.5,-.5) [] {};
					\node (id4b) at  (4.5,-.5) [] {};
					
					\node (m1) at  (1,-5) [prelie] {};
					\node (m2) at  (.5,-5.5) [prelie] {};
					\node (m3) at  (-.5,-6.5) [prelie] {};
					
					\node (outm1) at  (2,-6) [] {};
					\node (outm2) at  (1.5,-6.5) [] {};
					\node (outm3) at  (-2.5,-8.5) [] {};
					
					\coordinate (intm) at (.05,-7);
					
					\node (mb1) at  (-.5,-7.5) [colie] {};
					\node (mb2) at  (-2,-6) [colie] {};
					\node (mb3) at  (-3,-5) [colie] {};
					\node (mb4) at  (-3.5,-4.5) [colie] {};
					\node (mb5) at  (-4,-4) [colie] {};
					\node (mb6) at  (-5,-3) [colie] {};
					
					\node (inmb6l) at  (-6,-2) [] {};
					\node (inmb6) at  (-4,-2) [] {};
					\node (inmb5) at  (-3,-3) [] {};
					\node (inmb3) at  (-2,-4) [] {};
					\node (inmb2) at  (-1,-5) [] {};
					
					\coordinate (intmb) at (-3,-4);
					\coordinate (out) at (-.5,-8.15);
					
					\foreach \from/\to in {	in/n4,
						od1/n1,od2/n2,od3/n3,
						idc/nd0c,id1/nd1,nd3/id3,nd4/id4,nd4/id4b,
						m1/nd0c,
						m1/m2,
						outm1/m1,outm2/m2,outm3/m3,
						m3/intm,intm/mb1,
						intmb/mb4,mb1/mb2,mb3/mb4,mb4/mb5,
						inmb6l/mb6,inmb6/mb6,inmb5/mb5,inmb3/mb3,inmb2/mb2,
						nd1/ndin,nd2/nd3,nd3/nd4,
						intL/n1,n2/n3,n3/n4,n4/ndin,
						out/mb1}
					\draw[gray!20!black] (\from) -- (\to);
					\draw[dashed] (nd1) -- (nd0c);
					\draw[dashed] (n1) -- (n2);
					\draw[dashed] (m2) -- (m3);
					\draw[dashed] (mb2) -- (mb3);
					\draw[dashed] (mb5) -- (mb6);
					\draw[line width=.2mm,red,dashed] (intL) -- (intmb);
					\draw[line width=.2mm,green!60!black,dashed] (od1) -- (intmb);
					
					\draw [decorate,decoration={brace,amplitude=3pt},xshift=0pt,yshift=-10pt,thick] (-2.5,-1.5) -- (-.1,.75) node [midway] {\scriptsize{$q-1$}};
					\draw [decorate,decoration={brace,amplitude=3pt},xshift=0pt,yshift=-10pt,thick] (-.85,-5.95) -- (.85,-4.35) node [midway] {\scriptsize{$p-1$}};
					\draw [decorate,decoration={brace,amplitude=3pt},xshift=0pt,yshift=-10pt,thick] (4.2,-1.5) -- (1.5,-4.2) node [midway] {\scriptsize{$j_k-1$}};
					\draw [decorate,decoration={brace,amplitude=3pt},xshift=0pt,yshift=-10pt,thick] (-2.1,-6.) -- (-5.35,-2.75) node [midway] {\scriptsize{$j_1-1$}};
				\end{scope}		
			\end{tikzpicture}
			\hspace{1.5cm}
				b)
				\begin{tikzpicture}
					[scale=.45,auto=left,baseline=-1cm]  
					\begin{scope}[xscale=1,yscale=1]
						
						\node (in) at  (0,1) [] {};
						\node (n4) at  (0,0) [prelie] {};
						\node (n3) at  (-.5,-.5) [prelie] {};
						\node (n2) at  (-1,-1) [prelie] {};
						\node (n1) at  (-2,-2) [prelie] {};
						
						\coordinate (od1) at  (-1.5,-2.5);
						\node (od2) at  (0,-2) [] {};
						\node (od3) at  (.5,-1.5) [] {};
						
						\coordinate (intL) at (-2.5,-2.5);
						\coordinate (ndin) at (2.5,-2.5);
						\node (nd4) at  (3.5,-1.5) [colie] {};
						\node (nd3) at  (3,-2) [colie] {};
						\node (nd2) at  (2.5,-2.5) [colie] {};
						\node (nd1) at  (2,-3) [colie] {};
						\node (nd0c) at  (1,-4) [colie] {};
						
						\node (v) at  (2.85,-2.85) [] {\scriptsize{$v$}};

						\node (idb) at  (-.5,-3.5) [] {};
						\node (idc) at  (0,-3) [] {};
						\node (id1) at  (1,-2) [] {};
						\node (id3) at  (2,-1) [] {};
						\node (id4) at  (2.5,-.5) [] {};
						\node (id4b) at  (4.5,-.5) [] {};
						
						\node (m1) at  (1,-5) [prelie] {};
						\node (m2) at  (.5,-5.5) [prelie] {};
						\node (m3) at  (-.5,-6.5) [prelie] {};
						
						\node (outm1) at  (2,-6) [] {};
						\node (outm2) at  (1.5,-6.5) [] {};
						\node (outm3) at  (-2.5,-8.5) [] {};
						
						\coordinate (intm) at (.05,-7);
						
						\node (mb1) at  (-.5,-7.5) [colie] {};
						\node (mb2) at  (-2,-6) [colie] {};
						\node (mb3) at  (-3,-5) [colie] {};
						\node (mb4) at  (-3.5,-4.5) [colie] {};
						\node (mbp) at  (-2.5,-3.5) [] {};
						
						\node (inmb6) at  (-4,-2) [] {};
						\node (inmb5) at  (-3,-3) [] {};
						\node (inmb3) at  (-2,-4) [] {};
						\node (inmb2) at  (-1,-5) [] {};
						
						\coordinate (intmb) at (-4,-4);
						\coordinate (out) at (-.5,-8.15);
						
						\foreach \from/\to in {	in/n4,
							od1/n1,od2/n2,od3/n3,
							idc/nd0c,id1/nd1,nd3/id3,nd4/id4,nd4/id4b,
							m1/nd0c,
							m1/m2,
							outm1/m1,outm2/m2,outm3/m3,
							m3/intm,intm/mb1,
							intmb/mb4,mb1/mb2,mb3/mb4,mb4/mbp,
							inmb3/mb3,inmb2/mb2,
							nd1/ndin,nd2/nd3,nd3/nd4,
							intL/n1,n2/n3,n3/n4,n4/ndin,
							out/mb1}
						\draw[gray!20!black] (\from) -- (\to);
						\draw[dashed] (nd1) -- (nd0c);
						\draw[dashed] (n1) -- (n2);
						\draw[dashed] (m2) -- (m3);
						\draw[dashed] (mb2) -- (mb3);
						
						\draw[line width=.2mm,red,dashed] (intL) -- (intmb);
						\draw[line width=.2mm,green!60!black,dashed] (od1) -- (intmb);
						
						\draw [decorate,decoration={brace,amplitude=3pt},xshift=0pt,yshift=-10pt,thick] (-2.5,-1.5) -- (-.1,.75) node [midway] {\scriptsize{$q-1$}};
						\draw [decorate,decoration={brace,amplitude=3pt},xshift=0pt,yshift=-10pt,thick] (-.85,-5.95) -- (.85,-4.35) node [midway] {\scriptsize{$p-1$}};
						\draw [decorate,decoration={brace,amplitude=3pt},xshift=0pt,yshift=-10pt,thick] (4.2,-1.5) -- (1.5,-4.2) node [midway] {\scriptsize{$j_k-1$}};
						\draw [decorate,decoration={brace,amplitude=3pt},xshift=0pt,yshift=-10pt,thick] (-2.1,-6.) -- (-4,-4.1) node [midway] {\scriptsize{$j_1-1$}};
					\end{scope}		
				\end{tikzpicture}
			\end{equation*}
			\begin{equation*}
				c)
			\begin{tikzpicture}
				[scale=.45,auto=left,baseline=-1cm]  
				\begin{scope}[xscale=1,yscale=1]
					
					\node (in) at  (0,1) [] {};
					\node (n4) at  (0,0) [prelie] {};
					\node (n3) at  (-.5,-.5) [prelie] {};
					\node (n2) at  (-1,-1) [prelie] {};
					\node (n1) at  (-2,-2) [prelie] {};
					
					\coordinate (od1) at  (-1.5,-2.5);
					\node (od2) at  (0,-2) [] {};
					\node (od3) at  (.5,-1.5) [] {};
					
					\coordinate (intL) at (-2.5,-2.5);
					\coordinate (ndin) at (2.5,-2.5);
					\node (nd2) at  (2.5,-2.5) [] {};
					\node (nd1) at  (2,-3) [colie] {};
					\node (nd0c) at  (1,-4) [colie] {};
					
					\node (idb) at  (-.5,-3.5) [] {};
					\node (idc) at  (0,-3) [] {};
					\node (id1) at  (1,-2) [] {};

					\node (m1) at  (1,-5) [prelie] {};
					\node (m2) at  (.5,-5.5) [prelie] {};
					\node (m3) at  (-.5,-6.5) [prelie] {};
					
					\node (outm1) at  (2,-6) [] {};
					\node (outm2) at  (1.5,-6.5) [] {};
					\node (outm3) at  (-2.5,-8.5) [] {};
					
					\coordinate (intm) at (.05,-7);
					
					\node (mb1) at  (-.5,-7.5) [colie] {};
					\node (mb2) at  (-2,-6) [colie] {};
					\node (mb3) at  (-3,-5) [colie] {};
					\node (mb4) at  (-3.5,-4.5) [colie] {};
					\node (mb5) at  (-4,-4) [colie] {};
					\node (mb6) at  (-5,-3) [colie] {};
					
					\node (inmb6l) at  (-6,-2) [] {};
					\node (inmb6) at  (-4,-2) [] {};
					\node (inmb5) at  (-3,-3) [] {};
					\node (inmb3) at  (-2,-4) [] {};
					\node (inmb2) at  (-1,-5) [] {};
					
					\coordinate (intmb) at (-3,-4);
					\coordinate (out) at (-.5,-8.15);
					
					\foreach \from/\to in {	in/n4,
						od1/n1,od2/n2,od3/n3,
						idc/nd0c,id1/nd1,
						m1/nd0c,
						m1/m2,
						outm1/m1,outm2/m2,outm3/m3,
						m3/intm,intm/mb1,
						intmb/mb4,mb1/mb2,mb3/mb4,mb4/mb5,
						inmb6l/mb6,inmb6/mb6,inmb5/mb5,inmb3/mb3,inmb2/mb2,
						nd1/ndin,
						intL/n1,n2/n3,n3/n4,n4/ndin,
						out/mb1}
					\draw[gray!20!black] (\from) -- (\to);
					\draw[dashed] (nd1) -- (nd0c);
					\draw[dashed] (n1) -- (n2);
					\draw[dashed] (m2) -- (m3);
					\draw[dashed] (mb2) -- (mb3);
					\draw[dashed] (mb5) -- (mb6);
					\draw[line width=.2mm,red,dashed] (intL) -- (intmb);
					\draw[line width=.2mm,green!60!black,dashed] (od1) -- (intmb);
					
					\draw [decorate,decoration={brace,amplitude=3pt},xshift=0pt,yshift=-10pt,thick] (-2.5,-1.5) -- (-.1,.75) node [midway] {\scriptsize{$q-1$}};
					\draw [decorate,decoration={brace,amplitude=3pt},xshift=0pt,yshift=-10pt,thick] (-.85,-5.95) -- (.85,-4.35) node [midway] {\scriptsize{$p-1$}};
					\draw [decorate,decoration={brace,amplitude=3pt},xshift=0pt,yshift=-10pt,thick] (2.3,-2.75) -- (1.2,-3.9) node [midway] {\scriptsize{$j_k-1$}};
					\draw [decorate,decoration={brace,amplitude=3pt},xshift=0pt,yshift=-10pt,thick] (-2.1,-6.) -- (-5.35,-2.75) node [midway] {\scriptsize{$j_1-1$}};
				\end{scope}		
			\end{tikzpicture}
			\hphantom{1.5cm}
			d)
				\begin{tikzpicture}
				[scale=.45,auto=left,baseline=-1cm]  
				\begin{scope}[xscale=1,yscale=1]
					
					\node (in) at  (0,1) [] {};
					\node (n4) at  (0,0) [prelie] {};
					\node (n3) at  (-.5,-.5) [prelie] {};
					\node (n2) at  (-1,-1) [prelie] {};
					\node (n1) at  (-2,-2) [prelie] {};
					
					\coordinate (od1) at  (-1.5,-2.5);
					\node (od2) at  (0,-2) [] {};
					\node (od3) at  (.5,-1.5) [] {};
					
					\coordinate (intL) at (-2.5,-2.5);
					\coordinate (ndin) at (2.5,-2.5);
					\node (nd2) at  (2.5,-2.5) [] {};
					\node (nd1) at  (2,-3) [colie] {};
					\node (nd0c) at  (1,-4) [colie] {};
					
					\node (idb) at  (-.5,-3.5) [] {};
					\node (idc) at  (0,-3) [] {};
					\node (id1) at  (1,-2) [] {};

					\node (m1) at  (1,-5) [prelie] {};
					\node (m2) at  (.5,-5.5) [prelie] {};
					\node (m3) at  (-.5,-6.5) [prelie] {};
					
					\node (outm1) at  (2,-6) [] {};
					\node (outm2) at  (1.5,-6.5) [] {};
					\node (outm3) at  (-2.5,-8.5) [] {};
					
					\coordinate (intm) at (.05,-7);
					
					\node (mb1) at  (-.5,-7.5) [colie] {};
					\node (mb2) at  (-2,-6) [colie] {};
					\node (mb3) at  (-3,-5) [colie] {};
					\node (mb4) at  (-3.5,-4.5) [colie] {};
					\node (mbp) at  (-2.5,-3.5) [] {};
					
					\node (inmb3) at  (-2,-4) [] {};
					\node (inmb2) at  (-1,-5) [] {};
					
					\coordinate (intmb) at (-4,-4);
					\coordinate (out) at (-.5,-8.15);
					
					\foreach \from/\to in {	in/n4,
						od1/n1,od2/n2,od3/n3,
						idc/nd0c,id1/nd1,
						m1/nd0c,
						m1/m2,
						outm1/m1,outm2/m2,outm3/m3,
						m3/intm,intm/mb1,
						intmb/mb4,mb1/mb2,mb3/mb4,mbp/mb4,
						inmb3/mb3,inmb2/mb2,
						nd1/ndin,
						intL/n1,n2/n3,n3/n4,n4/ndin,
						out/mb1}
					\draw[gray!20!black] (\from) -- (\to);
					\draw[dashed] (nd1) -- (nd0c);
					\draw[dashed] (n1) -- (n2);
					\draw[dashed] (m2) -- (m3);
					\draw[dashed] (mb2) -- (mb3);
					\draw[line width=.2mm,red,dashed] (intL) -- (intmb);
					\draw[line width=.2mm,green!60!black,dashed] (od1) -- (intmb);
					
					\draw [decorate,decoration={brace,amplitude=3pt},xshift=0pt,yshift=-10pt,thick] (-2.5,-1.5) -- (-.1,.75) node [midway] {\scriptsize{$q-1$}};
					\draw [decorate,decoration={brace,amplitude=3pt},xshift=0pt,yshift=-10pt,thick] (-.85,-5.95) -- (.85,-4.35) node [midway] {\scriptsize{$p-1$}};
					\draw [decorate,decoration={brace,amplitude=3pt},xshift=0pt,yshift=-10pt,thick] (2.3,-2.75) -- (1.2,-3.9) node [midway] {\scriptsize{$j_k-1$}};
					\draw [decorate,decoration={brace,amplitude=3pt},xshift=0pt,yshift=-10pt,thick] (-2.1,-6.) -- (-4,-4.1) node [midway] {\scriptsize{$j_1-1$}};
				\end{scope}		
			\end{tikzpicture}
		\end{equation*}
		By using the third and fourth relations of \eqref{rel Koszul 1}, one may move the $j_k-1$ crossed vertices outside the box. 
		Then, by using the associativity and coassociativity relations, one obtains a tree with a pencil box in it.  
		To be more specific, for the trees a) and b) one may move down the crossed vertices that are below $v$ by using the third relation of \eqref{rel Koszul 1} several times. 
		Then, one may move up $v$  by using the last relation of \eqref{rel Koszul 1}; this places all the other crossed vertices that were above $v$ also outside the box. 
		Finally, one may apply the associativity and coassociativity relations to obtain a pencil box. 
		For the trees c) and d), one may proceed similarly but with the sole use of the third relation of \eqref{rel Koszul 1} several times to move all the $j_k-1$ crossed vertices outside the box. The Claim is proven. 
	\end{enumerate}
	\end{proof}
 \end{proof}

\begin{lem}\label{lem: red morphism}
	The map $\redd$ is a morphism of properads. 
\end{lem}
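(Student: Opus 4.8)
The plan is to check that $\redd$ commutes with all the composition maps; its compatibility with the $\Sigma$--bimodule structure is immediate from Lemma \ref{lem: red well-defined}. For a composition along a graph of positive genus, both sides vanish: on the source side, $\mathsf{uBIB}_{>0}^!$ is a sub-$\Sigma$--bimodule of the dioperad $\mathsf{uBIB}^!\cong \mathsf{uDPois}^!$, so such a composition of cyclic words is $0$ and $\redd$ sends it to $0$; on the target side, the corresponding composition of the reduced cyclic words $rBW_1$ and $rBW_2$ in $\bib^!$ is $0$ by Lemma \ref{lem: compo genus}. Hence only the genus-zero compositions need to be treated, and those along trees with at least two vertices factor through the infinitesimal (single-edge) ones. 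The task thus reduces to showing, for all cyclic words $W_1=BW_{a_1,\dots,a_p}^{J_1,\dots,J_p}$ and $W_2=BW_{b_1,\dots,b_q}^{K_1,\dots,K_q}$ in $\mathsf{uBIB}_{>0}^!$ and every admissible infinitesimal composition $\circ_{u,b_1}$, that
\begin{equation*}
	\redd\bigl(W_1\circ_{u,b_1}W_2\bigr)=\redd(W_1)\circ_{u,b_1}\redd(W_2)\quad\text{in }\bib^!,
\end{equation*}
where on the left the composition is computed in $\mathsf{uBIB}_{>0}^!$ via the formula \eqref{eq: compo word} (and its $q=1$ variant stated just above it), transported along the isomorphism $\mathsf{uBIB}^!\cong \mathsf{uDPois}^!$, and on the right it is the grafting of the reduced level trees $rBW_1$ and $rBW_2$ performed inside $\bib^!$.

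By \eqref{eq: compo word}, together with the cyclic relation \eqref{eq: cyclic relation} which allows us to assume $u\in J_p$ and to normalise the outcome so that its last corolla is nonempty, one has $W_1\circ_{u,b_1}W_2=(-1)^{\eta}W_3$ for an explicit cyclic word $W_3$ and the sign $\eta$ of \eqref{eq: eta compo}; hence the left-hand side equals $(-1)^{\eta}\,rBW_3$. For the right-hand side, grafting $rBW_2$ into the $u$-th input edge of $rBW_1$ produces a level tree $T$, and the heart of the argument is to reduce $T$ to $\pm\,rBW_3$ within $\bib^!$. The leaf $u$ lies along the co-Lie chain $\hat C^{J_p}$ of $rBW_1$, which gives a splitting $J_p=J_p^{(1)}uJ_p^{(2)}$. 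I would first use the coassociativity relation (the second relation of \eqref{rel Koszul 1}) to regroup the co-Lie chains into $\hat C^{K_1J_p^{(2)}}$ and $\hat C^{J_p^{(1)}K_q}$, then the associativity relation (the first relation of \eqref{rel Koszul 1}) to regroup the prelie parts, and finally the relations \eqref{rel Koszul 2}, \eqref{rel Koszul 2b} and the level-tree identities \eqref{M1} to move the macro-blocks (the $B_i$'s, $B'_i$'s and the $\hat C$'s) into the reduced configuration \eqref{eq: image of word 2}. This is essentially the same sequence of elementary moves used in \cite{Leray-Vallette} to derive \eqref{eq: compo word} in the first place; the crucial observation is that every intermediate level tree occurring here uses only prelie and co-Lie vertices, so all of these moves take place inside $\bib^!$ and no relation involving the unit generator is ever invoked. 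It then remains to track the accumulated sign and identify it with $(-1)^{\eta}$, by a count parallel to (and partly overlapping with) the multi-step computation in the proof of Lemma \ref{lem: red well-defined}, combined with the $(-1)^{n(0)}$ conventions appearing in \eqref{eq: image of word 2}.

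The main obstacle I anticipate is exactly this core reduction of $T$ to $rBW_3$ together with the matching of the sign $\eta$ of \eqref{eq: eta compo}: the chain of moves is long and the sign bookkeeping is delicate, even though each individual step merely instantiates a relation of $\bib^!$ or an identity of \eqref{M1} that has already appeared above. Once this verification is carried out for all $W_1$, $W_2$, $u$ and $b_1$, it follows that $\redd$ is a morphism of properads.
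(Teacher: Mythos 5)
Your overall strategy is the same as the paper's: kill the positive-genus compositions using Lemma \ref{lem: compo genus} together with the fact that $\mathsf{uBIB}^!_{>0}\subset\mathsf{uBIB}^!\cong\mathsf{uDPois}^!$ is a dioperad, and reduce the rest to infinitesimal compositions of cyclic words, to be compared with \eqref{eq: compo word} by level-tree manipulations inside $\bib^!$. The problem is that your write-up stops exactly where the content of the lemma begins. The claim that the grafted tree $T$ reduces in $\bib^!$ to $\pm\,rBW_3$ with precisely the sign $(-1)^{\eta}$ of \eqref{eq: eta compo} is not a routine consequence of ``the same moves as in \cite{Leray-Vallette}'': it is the computation the proof has to supply. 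In the paper this is carried out explicitly for the composition at the first corolla: one moves $C^{j_p^{(2)}}$ and the vertex $v$, applies the second relation of \eqref{rel Koszul 2}, the associativity relations and the third relation of \eqref{rel Koszul 1}, and collects the signs $(-1)^{(j_p^{(2)}-1)k_1+[1]_K+1}$, $(-1)^{[1]_K k_1}$ and $(-1)^{j_p^{(1)}(j_p^{(2)}+k_1+\cdots+k_{q-1})}$; the product matches $(-1)^{\eta}$ only after observing that $n(0)_J+n(0)_K$ and the number of empty corollas of the composite differ by one exactly when $j_p^{(2)}>0$ and $k_1=0$. None of this bookkeeping appears in your proposal; you yourself flag it as the ``main obstacle'', so the argument is incomplete at its central point rather than merely terse.

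There is also a structural gap in your reduction to the single composition $\circ_{u,b_1}$ at the output of the first corolla of $W_2$. The element $\redd(W_2)$ is computed from a representative whose \emph{last} corolla is nonempty, and the output along which you compose then sits at some position $t$ of that representative. If $k_{t-1}=0$, no cyclic rotation brings this output to the first position while keeping the last corolla nonempty, so the case $t=1$ does not exhaust the infinitesimal compositions; this is exactly why the paper treats $t\geq 2$ with $k_{t-1}=0$ separately, by a further sequence of moves (relations \eqref{M1} b) and c), plus coassociativity) exchanging the blocks $A_1$, $A_2$, $B_1$, $B_2$, and only invokes the cyclic relation of Lemma \ref{lem: red well-defined} when $k_{t-1}>0$. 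Your proposal silently identifies these two situations, so even the case analysis, let alone the sign verification, is not complete as written.
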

Essentially, this amounts to understanding the compositions \eqref{eq: compo word}  in terms of cyclic words $BW_{1,\dots,m}^{(j_1),\dots,(j_m)}$ in $\mathsf{uBIB}^!$ and see that they do not involve the unit relations, that is, they restrict to reduced words in $\bib^!$.  
\begin{proof}
	Let $BW_J:= BW_{1,\dots,p}^{(j_1),\dots,(j_p)}$ and $BW_K:= BW_{1,\dots,q}^{(k_1),\dots,(k_q)}$ in $\mathsf{uBIB}^!_{>0}$ such that $j_p,k_q\neq 0$.   
	Let $u$ be an element of the tuple $(j_p)$. 
	
	If  $q=1$ it is almost immediate to see that $\redd(BW_J \circ_{u,1} BW_K)= \redd(BW_J) \circ_{u,1} \redd(BW_K)$. 
	
	Suppose $q\geq 2$, and let  $1 \leq t\leq q$. 
	We want to prove that 
	\begin{equation*}
		\redd(BW_J \circ_{u,t} BW_K)= \redd(BW_J) \circ_{u,t} \redd(BW_K). 
	\end{equation*} 
	Note that it is not enough to show this for $t=1$ since $BW_K$ may not have $k_{t-1}> 0$. 
	We first consider the case $t=1$, which we prove in details. 
	Then, we sketch a proof for the other cases; it involves more steps but follows the line of the former case. 

	Recall that $n(0)_J$ (resp. $n(0)_K$) denotes the number of $j_a$'s (resp. $k_a$'s) that are zero. 
	Also, $[i]_J$ denotes the cardinal of $\{ 1\leq a\leq i ~|~ j_a\neq 0 \}$ and similarly for $[i]_K$. 
	Clearly, $n(0)_J= p-[p]_J$. 
	To alleviate notations, let us write $C^{j_i}$ for $C^{(j_i)}$. 
	
	Let $t=1$. Let us represent $\redd(BW_J) \circ_{u,1}\redd(BW_{2})$ by the following level tree, in which we write only the blue boxes' last input, the other ones being implicit. 
	\begin{equation*}
		(-1)^{n(0)_J+n(0)_K}
		\begin{tikzpicture}
			[scale=.4,auto=left,baseline=.5cm]  
			\begin{scope}[xscale=1,yscale=1]
				\node[above,yshift=-0.1cm] (out1) at (1,3.25)  {\scriptsize{$C^{k_1}$}};
				\node[above,yshift=-0.1cm] (outn) at (5,6.5)  {\scriptsize{$C^{k_q}$}};
				
				\coordinate (itout1) at (1,2.5)  {};
				
				\coordinate (it) at (3.25,1.75)  {};

				\node[draw] (nds) at (2.5,.5)  {\scriptsize$D_1$};
				
				\node (ns1)  [draw] at (5,5) [fill=blue!50] {\small{$BW_K'$}};
				\node (ns) at  (2.5,2.5) [prelie] {};
				\node (nCb)   at (.5,-1.5) {\scriptsize{$C^{j_p^{(2)}}$}};
				\coordinate (n1s)   at (-2,-2) {};
				\node (nd1) at  (-1,-3) [colie,label=right:{$v$}] {};
				\node (nd0c) at  (-1.5,-3.5) [colie] {};
				\node (nd0b) at  (-2,-4) [colie] {};
				\node (nd0a) at  (-3,-5) [colie] {};
				\node (nW) at  (-3,-7) [draw,fill=blue!50] {\small{$BW_J'$}};
				
				\node (inC1) at  (-4,-4) [] {};					
				\node (inC2) at  (-3,-3) [] {};					
				\node (inC3) at  (-2.5,-2.5) [] {};

				\node[below,yshift=0.1cm] (inn) at (8.25,3.5) {\scriptsize{$p+q-1$}};	
				\node[below,yshift=0.1cm] () at (6,3.25) {\scriptsize{$...$}};	
				\node[below,yshift=0.1cm] (ins1) at (4.5,3.5) {\scriptsize{$p+2$}};	
				
				\node[below,yshift=0.1cm] (ins) at (2.5,-1) {\scriptsize{$p+1$}};	
				
				\node[below,yshift=0.1cm] (in1s) at (-2,-8.5) {\scriptsize{$p$}};	
				\node[below,yshift=0.1cm] () at (-2.75,-8.5) {\scriptsize{$...$}};	
				\node[below,yshift=0.1cm] (in0) at (-4,-8.5) {\scriptsize{$1$}};

				\foreach \from/\to in {in0/nW,in1s/nW,ins1/ns1,ins/nds,inn/ns1,
					nW/nd0a,nd0b/nd0c,nd0c/nd1,nd1/n1s,nd1/nCb,
					nd0a/inC1,nd0b/inC2,	nd0c/inC3,
					n1s/ns,ns/ns1,nds/itout1,ns/n1s,nds/itout1,itout1/out1,nds/it,it/ns,
					ns1/outn}
				\draw[gray!20!black] (\from) -- (\to);
				\draw[dashed] (nd0a) -- (nd0b);
				\draw [decorate,decoration={brace,amplitude=3pt},xshift=0pt,yshift=-10pt,thick] (-4,-4) -- (-2.25,-2.25) node [midway] {\scriptsize{$j_p^{(1)}$}};
			\end{scope}		
		\end{tikzpicture}
	\end{equation*}
	
	\begin{itemize}
		\item If $j_p^{(2)}>0$. 
		We move $C^{j_p^{(2)}}$ in between $C^{k_1}$ and $BW_K'$. 
		We also move the vertex labeled by $v$ immediately above $D_1$. 
		Then, we apply the second relation of \eqref{rel Koszul 2} to the subtree formed by $v$ and the next upper blue vertex.  
		These steps produce the sign: $(-1)^{(j_p^{(2)}-1)(D_1 + C^{k_1}) + D_1+1}= (-1)^{(j_p^{(2)}-1)k_1 + [1]_K +1}$. 
		\\
		If $k_1>0$, there is an additional step. 
		We use the associativity relation on $v$ and the vertex of $D_1$, then we move the highest of the two obtained vertices immediately above the blue vertex. 
		Then we use the associativity relation several times to get the corolla $C^{k_1+j_p^{(2)}}$. 
		The sign is $(-1)^{1+  k_1-1}= (-1)^{k_1}$, which can be written as $(-1)^{[1]_Kk_1}$ to keep track of the condition $k_1>0$. 
		\item If $j_p^{(1)}>0$. 
		We move the $j_p^{(1)}$ crossed vertices immediately above $D_1$. 
		Then, via the third relation of \eqref{rel Koszul 1}, we move them above the blue vertex. 
		We repeat these steps to reach the top. 
		The resulting sign is $(-1)^{j_p^{(1)}(D_1+D_2...+D_{q-1}+ C^{k_1 +j_p^{(2)}}+ C^{k_2} + \cdots + C^{k_{q-1}}) } 
		=(-1)^{j_p^{(1)}\big(j_p^{(2)} + k_1+ k_2 + \cdots + k_{q-1} \big) }$.  
	\end{itemize}
	The sign obtained by these steps is therefore $(-1)^{j_p^{(1)}(j_p^{(2)} + k_1+ k_2 + \cdots + k_{q-1} )+ j_p^{(2)}k_1 + [1]_K +1 }$ if $j_p^{(2)}>0$ and $(-1)^{j_p^{(1)}(j_p^{(2)} + k_1+ k_2 + \cdots + k_{q-1} )}$ otherwise. 
	Except if $j_p^{(2)}>0$ and $k_1=0$ it coincides with $\eta$ from \eqref{eq: eta compo}. 
	Note that the difference between $n(0)_J+n(0)_K$ and $n(0)_{J\circ K}$ (the number of $j_a$'s being zero in $\redd(BW_J \circ_{u,t} BW_K)$) is one if and only if $j_p^{(2)}>0$ and $k_1=0$; otherwise it is zero.  
	Therefore, the equality is proven. 
	\\
	
	We now sketch a proof for $t\geq 2$. 
	Note that if $k_{t-1}>0$, one may use the cyclic relation from Lemma \ref{lem: red well-defined} to be in the setting of the previous case. 
	For $k_{t-1}=0$, we proceed as follows. 
	First, let us  remark that 
	\begin{equation*}
		BW_J \circ_{u,t} BW_{K} = (-1)^{\epsilon_t+\eta}
		BW
		_{  t,\dots,    p+t-2,     p+t-1,                 p+t,\dots,       q+p-1, 1,\dots,  t-2, t-1}
		^{(j'_1),\dots,(j'_{p-1}), (k'_{t})(j_p^{(2)'}), (k'_{t+1}),\dots,(k'_q),(k'_1),\dots, (k_{t-2}),(j^{(1)'}_p)},
	\end{equation*}
where $\epsilon_t= (k_1+ \cdots +k_{t-1})n_K+ k_1+..+k_{t-2}+k_q$ is the sign due to the cyclic relation on $BW_K$ in order to place the $t$-th output at the left most position.  
	On the other side, one has 
	\begin{equation*}
		\redd(BW_J) \circ_{u,t} \redd(BW_{K}) = 
		(-1)^{n(0)_J+n(0)_K}
		\pgfdeclarelayer{background}
		\pgfsetlayers{background,main} 
		\scalebox{.75}{
				\begin{tikzpicture}
			[scale=.4,auto=left,baseline=-3cm]  
			\begin{scope}[xscale=-1]
				\node (bin1) at (-4,4) {{$C^{k'_q}$}};
				\node (bin2) at (0,4) { {$C^{k'_{q-1}}$}};
				\node (bint2) at (6,-2) { {$C^{k'_{t-2}}$}};
				\node (binq) at (10,-6) { {$C^{k'_{1}}$}};
				
				\node (acin1l) at (-4,-11) { {$C^{j^{(2)'}_p}$}};
				\node (acin1r) at (0,-12) { {$C^{j^{(1)'}_p}$}};
				\node (ain2) at (2,-14) { {$C^{j'_{p-1}}$}};
				\node (ainp) at (6,-18) { {$C^{j'_{1}}$}};
				
				\node (b2) at (-2,2) [] {$B'_{q-1}$};
				\node (v) at (2.5,-1.45) [] {$v$};
				\node (bt1) at (2,-2) [draw,thin, fill=blue!50, circle, minimum size=6pt, inner sep=0pt] {};
				\node (bt2) at (4,-4) [] {$B'_{t-2}$};
				\node (bq) at (8,-8) [] {$B'_{1}$};
				
				\node (ac1) at (-2,-14) [coliel] {};			
				\node (ac2) at (-2,-13) [coliel] {};			
				
				\node (a2) at (0,-16) [] {$B_{p-1}$};
				\node (ap) at (4,-20) [] {$B_{1}$};
				
				\node (bout2) at (-4,0) { {$p+q-1$}};
				\coordinate (boutt1) at (-2,-6);
				\node (boutt2) at (2,-6) { {$t-1$}};
				\node (boutq) at (6,-10) { {$2$}};
				\node (bout11) at (10,-10) { {$1$}};
				
				\node (aout2) at (-2,-18) { {$p+t-1$}};
				\node (aoutp) at (2,-22) { {$t+1$}};
				\node (aoutpl) at (6,-22) { {$t$}};

				\node (A) at (-7,-13) [] { {$A_1$}};
				\node (A2) at (-5,-21) [] { {$A_2$}};
				\node (B) at (11,-1) [] { {$B_2$}};
				\node (B2) at (5,4) [] { {$B_1$}};
				\coordinate (AA) at (-5.5,-13);
				\coordinate (AA2) at (-2,-21);
				\coordinate (BB) at (9.1,-3);
				\coordinate (BB2) at (3,3.5);

				\node (in1) at (1,0) {};
				\foreach \from/\to in {aout2/a2,aoutp/ap,aoutpl/ap,a2/ain2,a2/ac1,ac1/ac2,ac2/acin1l,ac1/acin1r,ap/ainp, ac2/boutt1,boutt1/bt1, bout2/b2,boutt2/bt2,bout11/bq,boutq/bq,b2/bin1,b2/bin2,bt2/bint2,bq/binq,bt1/bt2,
					A/AA,A2/AA2,B/BB,B2/BB2}
				\draw[gray!20!black] (\from) -- (\to);
				\draw (b2) node {} -- (bt1) [dashed] node {};
				\draw (bt2) node {} -- (bq) [dashed] node {};
				\draw (a2) node {} -- (ap) [dashed] node {};
				
				\begin{pgfonlayer}{background}
					\draw[dashed,fill=red!50] (-2,-15) to (-5.5,-15)  -- (-5.5,-10) -- (2,-10) -- (2,-12) -- (-2,-15);
					\draw[dashed,fill=red!50] (-5,-18) to (0,-23)  -- (7,-23) -- (7,-18) -- (2.5,-12.5)-- (-5,-18);
					\draw[dashed,fill=blue!40] (6,0) to (0,-6)  -- (8,-13) -- (13.5,-7) -- (6,0);
					\draw[dashed,fill=blue!40] (-6,6) to (2,5)  -- (4,2) -- (-2,-4) -- (-7,0)-- (-6,6);
				\end{pgfonlayer}
			\end{scope}
		\end{tikzpicture}
		}
	\end{equation*}
	To show that the above level tree is equal to $\redd(BW_J \circ_{r,t} BW_{K})$  in $\bib^!$  we proceed in two steps. 
	The first one consists in moving the corolla labeled by $A_1$. 
	\begin{enumerate}
		\item\label{step 11} The right part $j_1^{(2)'}$ is moved next to $k_{t}'$.  
		Since $t<q$, this is performed by means of relation \eqref{M1} c), among adequate level changes. 
		\item\label{step 12} The left part $j_1^{(1)'}$ is moved just above the blue vertex $v$. 
		The crossed vertex to which $C^{j_1^{(1)'}}$ is attached to is moved just below the blue vertex $v$. 
	\end{enumerate}
	Note that, if $j_1^{(1)}=0$ and/or $j_1^{(2)}=0$, then  \eqref{step 11} and/or \eqref{step 12} are/is skipped. 
	
	The second step consists in moving the part $A_2$ between part $B_1$ and part $B_2$; this is performed as follows. 
	First, the levels of part $A_2$ and part $B_2$ are exchanged. 
	Then, if $j_1^{(1)}= 0$, an iterative use of the coassociativity relation (second relation of \eqref{rel Koszul 1}) provides the result. 
	If $j_1^{(1)} \neq 0$, it is an iterative use of relation \eqref{M1} b) that is performed. 
	In both cases, the relation is used $p-1$ times. 
	The resulting level tree coincides with $\redd(BW_J \circ_{u,t} BW_{K})$. 
	\\
	
	To end this proof, since $\mathsf{uBIB}_{>0}^!$ is a dioperad (it is a sub properad of $\mathsf{uBIB}^!\cong \mathsf{uDPois}^!$ which is a dioperad), it remains to show that any composition of reduced cyclic words of $\mathsf{BIB}^!$ along graphs of positive genus is zero. This is Lemma \ref{lem: compo genus}.   
\end{proof}

	\subsection{The cobar construction of $\bib^\antishrieck$}\label{subsec: dual cobar}

	Let us denote by $\widetilde{rBW}^{J_1,\dots,J_m}_{a_1,\dots,a_m}$ the element in $s^{-1}(\bib^!(m,n))^*$ that is dual to ${rBW}^{J_1,\dots,J_m}_{a_1,\dots,a_m}\in \bib^!(m,n)$. 
	Its degree is therefore $n-2$. 
	
	In the following graphical representation, we simplified the drawing: 
	instead of drawing a corolla with, say, $|J_i|$ input edges, we draw a single edge to a "leaf" labeled by $J_i$.  
	By applying the definition of the cobar differential $\partial$, one obtains that the boundary of a reduced cyclic word $\widetilde{rBW}^{J_1,\dots,J_m}_{1,\dots,m}$ is 
	
	\begin{align*}
		\partial	\left(
		\begin{tikzpicture}
			[scale=.4,auto=left,baseline=0.4cm]  
			\node (out1) at (-1,3)  {\scriptsize{$J_1$}};
			\node (out2) at (1,3) {\scriptsize{$J_2$}};
			\node (out3) at (5,3) {\scriptsize{$J_m$}};
			\node (n1) at (-1,1) [circblack] {};
			\node (n2) at (1,1) [circblack] {};
			\node (n3) at (5,1) [circblack] {};
			\node (in1) at (-1,-.5) {\scriptsize{$1$}};
			\node (in2) at (1,-.5) {\scriptsize{$2$}};
			\node (in3) at (5,-.5) {\scriptsize{$m$}};
			\node (dotintervert) at (3,1) {\scriptsize{$\cdots$}};
			\foreach \from/\to in {in1/n1,in2/n2,in3/n3,n1/out1,n2/out2,n3/out3}
			\draw[gray!40!black] (\from) -- (\to);
			\draw [decorate,decoration={zigzag,amplitude=.7pt,segment length=1.5mm}]    (n1) -- (2.25,1);
			\draw [decorate,decoration={zigzag,amplitude=.7pt,segment length=1.5mm}]    (3.75,1) -- (n3);
		\end{tikzpicture}
		\right)
		&=
		\sum_{\substack{\upsilon\in \Sigma^{\text{cyc}}_m \\  1\leq i \leq m-1\\ J_{\upsilon(i)}^{(1)}J_{\upsilon(i)}^{(2)} = J_{\upsilon(i)}\\  J_{\upsilon(m)}^{(1)}J_{\upsilon(m)}^{(2)} = J_{\upsilon(m)} \\ |J_{\upsilon(m)}^{(1)}|+|J_{\upsilon(i)}^{(2)}|\neq 0 \text{ if } i=1 }}
		(-1)^{\zeta_1}
		\begin{tikzpicture}
			[scale=.4,auto=left,baseline=1.3cm]  
			\begin{scope}[yscale=1,xscale=1]
				\node (out0) at (-5,3)  {\scriptsize{$J_{\upsilon(1)}$}};
				\node (out1) at (-1,3)  {\scriptsize{$J_{\upsilon(i-1)}$}};
				\node (out2) at (1,3) {\scriptsize{$J_{\upsilon(m)}^{(1)}$}};
				\node (out22) at (3,3) {\scriptsize{$J_{\upsilon(i)}^{(2)}$}};
				\node (out3) at (2,7) {\scriptsize{$J_{\upsilon(i)}^{(1)}$}};
				\node (out4) at (5,7) {\scriptsize{$J_{\upsilon(i+1)}$}};
				\node (out5) at (9,7) {\scriptsize{$J_{\upsilon(m)}^{(2)}$}};
				\node (n0) at (-5,1) [circblack] {};
				\node (n1) at (-1,1) [circblack] {};
				\node (n2) at (2,1) [circblack] {};
				\node (n3) at (2,5) [circblack] {};
				\node (n4) at (5,5) [circblack] {};
				\node (n5) at (9,5) [circblack] {};
				\node (in0) at (-5,-.5) {\scriptsize{$\upsilon(1)$}};
				\node (in1) at (-1,-.5) {\scriptsize{$\upsilon(i-1)$}};
				\node (in2) at (2,-.5) {\scriptsize{$\upsilon(i)$}};
				\node (in4) at (5,3.5) {\scriptsize{$\upsilon(i+1)$}};
				\node (in5) at (9,3.5) {\scriptsize{$\upsilon(m)$}};
				\node (dot1out) at (-2.75,1) {\scriptsize{$\cdots$}};
				\node (dot1out) at (6.75,5) {\scriptsize{$\cdots$}};
				\foreach \from/\to in {in0/n0,n0/out0,in1/n1,in2/n2,n1/out1,n2/out2,n2/out22,n2/n3,n3/out3,in4/n4,n4/out4,in5/n5,n5/out5}
				\draw[gray!40!black] (\from) -- (\to);
				\draw [decorate,decoration={zigzag,amplitude=.7pt,segment length=1.5mm}]    (n0) -- (-3.5,1);
				\draw [decorate,decoration={zigzag,amplitude=.7pt,segment length=1.5mm}]    (-2,1) -- (n2);
				\draw [decorate,decoration={zigzag,amplitude=.7pt,segment length=1.5mm}]    (n3) --(6,5);
				\draw [decorate,decoration={zigzag,amplitude=.7pt,segment length=1.5mm}]    (7.5,5) -- (n5);
			\end{scope}
		\end{tikzpicture}
		\\
		&+
		\sum_{\substack{\upsilon\in \Sigma^{\text{cyc}}_m \\   J_{\upsilon(m)}^{(1)}J_{\upsilon(m)}^{(2)} J_{\upsilon(m)}^{(3)}= J_{\upsilon(m)} \\ |J_{\upsilon(m)}^{(2)}|> 1 } }
		(-1)^{\zeta_2}
			\begin{tikzpicture}
	[scale=.4,auto=left,baseline=1.3cm]  
	\begin{scope}[yscale=1,xscale=1]
		\node (out0) at (-5,3)  {\scriptsize{$J_{\upsilon(1)}$}};
		\node (out1) at (-1.5,3)  {\scriptsize{$J_{\upsilon(m-1)}$}};
		\node (out2) at (1,3) {\scriptsize{$J_{\upsilon(m)}^{(1)}$}};
		\node (out22) at (3,3) {\scriptsize{$J_{\upsilon(m)}^{(3)}$}};
		\node (out3) at (2,7) {\scriptsize{$J_{\upsilon(m)}^{(2)}$}};
		\node (n0) at (-5,1) [circblack] {};
		\node (n1) at (-1.5,1) [circblack] {};
		\node (n2) at (2,1) [circblack] {};
		\node (n3) at (2,5) [circblack] {};
		\node (in0) at (-5,-.5) {\scriptsize{$\upsilon(1)$}};
		\node (in1) at (-1.5,-.5) {\scriptsize{$\upsilon(m-1)$}};
		\node (in2) at (2,-.5) {\scriptsize{$\upsilon(m)$}};	
		\node (dot1out) at (-3,1) {\scriptsize{$\cdots$}};
		\foreach \from/\to in {in0/n0,n0/out0,in1/n1,in2/n2,n1/out1,n2/out2,n2/out22,n2/n3,n3/out3}
		\draw[gray!40!black] (\from) -- (\to);
		\draw [decorate,decoration={zigzag,amplitude=.7pt,segment length=1.5mm}]    (n0) -- (-3.75,1);
		\draw [decorate,decoration={zigzag,amplitude=.7pt,segment length=1.5mm}]    (-2.25,1) -- (n2);
	\end{scope}
\end{tikzpicture}
\end{align*}
	where, setting $\varepsilon_0:=0$ and $\varepsilon_s:= (j_1+ \cdots +j_{s-1})n+ j_1+..+j_{s-2}+j_m$, one has:  
	\begin{multline*}
	\zeta_1=	j_{\upsilon(m)}^{(1)}(j_{\upsilon(i)}+ \cdots + j_{\upsilon(m-1)} ) 
		+j_{\upsilon(i)}^{(2)} j_{\upsilon(i)}^{(1)}  
		+ \varepsilon_{\upsilon(1)-1} %
\\		+ \left(  j_{\upsilon(1)} + ...+ j_{\upsilon(i-1)} + j_{\upsilon(m)}^{(1)} + j_{\upsilon(i)}^{(2)} \right) 
		\cdot  \left(  j_{\upsilon(i)}^{(1)} + j_{\upsilon(i+1)} + ...+j_{\upsilon(m-1)}  + j_{\upsilon(m)}^{(2)}-1 \right)+1
		\end{multline*}
	and 
	\begin{equation*}
	\zeta_2=	j_{\upsilon(m)}^{(3)}\left(j_{\upsilon(m)}^{(2)}-1\right) 
		+ \varepsilon_{\upsilon(1)-1} 
		+ \left( j_{\upsilon(1)} + ...+ j_{\upsilon(m-1)} +j_{\upsilon(m)}^{(1)} + j_{\upsilon(m)}^{(3)} \right)   j_{\upsilon(m)}^{(2)}  +1. 
	\end{equation*}
	
	This follows from the computation of the infinitesimal decomposition maps of $\bib^\antishrieck$, which are dual to the infinitesimal composition maps of $\bib^! \subset \mathsf{uBIB}^!\cong \mathsf{uDPois}^!$. 
	The computation follows the lines of \cite[Proof of Lemma 1.34]{Leray-Vallette} and in not reproduced here. 

\begin{prop}\label{cor: surj cob bib dp}
	There is a surjection of properads 
	\begin{equation*}
	\cobd{\bib^\antishrieck } \twoheadrightarrow \cobd{\mathsf{DPois}^{\textnormal{\textexclamdown}}}. 
	\end{equation*}
\end{prop}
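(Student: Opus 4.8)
The plan is to realise $\mathsf{DPois}^{!}$ as a sub-properad of $\bib^{!}$, then dualise, and finally apply the cobar functor. First I would set up the inclusion. Recall from \cite{Leray-Vallette} that $\mathsf{DPois}^{!}$ is a sub-properad of $\mathsf{uDPois}^{!}$ and that, in the combinatorial description recalled above, it is spanned by exactly those cyclic words $W^{J_1,\dots,J_m}_{a_1,\dots,a_m}$ all of whose labeling tuples $J_i$ are non-empty (the empty tuples being precisely what the added unit produces). On the other side, Theorem~\ref{prop: iso uBIB and uDPois} gives $\mathsf{uDPois}^{!}\cong\mathsf{uBIB}^{!}$, and Theorem~\ref{prop: iota inclusion} together with the identification of the image of $\iota$ with $\mathsf{uBIB}_{>0}^{!}$ gives $\bib^{!}\cong\mathsf{uBIB}_{>0}^{!}$, which under the isomorphism corresponds to the span of the cyclic words having at least one non-empty $J_i$. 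Since a word with all $J_i$ non-empty has $n=\sum_i|J_i|\geq m\geq 1$, one obtains an inclusion of $\Sigma$--bimodules $\mathsf{DPois}^{!}\subseteq\mathsf{uBIB}_{>0}^{!}\cong\bib^{!}$; and the composition formula~\eqref{eq: compo word} shows that the composite of two words with all $J_i$ non-empty is again of this form, so this inclusion $j\colon\mathsf{DPois}^{!}\hookrightarrow\bib^{!}$ is a morphism of properads. Concretely, $j$ sends $W^{J_1,\dots,J_m}_{a_1,\dots,a_m}$ to the reduced cyclic word $rBW^{J_1,\dots,J_m}_{a_1,\dots,a_m}$.

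Next I would dualise. The properads $\bib^{!}$ and $\mathsf{DPois}^{!}$ are of finite type, since for fixed $(m,n)$ there are only finitely many cyclic words, so their arity-$(m,n)$ components are finite-dimensional. Hence $(\bib^{!})^{*}=\bib^{\antishrieck}$ and $(\mathsf{DPois}^{!})^{*}=\mathsf{DPois}^{\antishrieck}$, and the linear dual of the injection $j$ is a surjective morphism of coproperads
\begin{equation*}
	j^{*}\colon\bib^{\antishrieck}\twoheadrightarrow\mathsf{DPois}^{\antishrieck}.
\end{equation*}

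Finally I would invoke the functoriality of the cobar construction. For any surjection of coaugmented coproperads $g\colon\mathsf{C}_1\twoheadrightarrow\mathsf{C}_2$, the induced map on coaugmentation coideals $\overline{g}$ is surjective, so $\mathcal{G}(s^{-1}\overline{g})\colon\mathcal{G}(s^{-1}\overline{\mathsf{C}_1})\to\mathcal{G}(s^{-1}\overline{\mathsf{C}_2})$ is surjective (the functor $\mathcal{G}$ is a direct sum of iterated tensor products, which preserve surjections over $\Bbbk$), and it commutes with the cobar differentials by naturality. Applying this to $g=j^{*}$ produces the desired surjection of properads $\cobd{\bib^{\antishrieck}}\twoheadrightarrow\cobd{\mathsf{DPois}^{\antishrieck}}$.

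In explicit terms, this surjection sends the cobar generator $\widetilde{rBW}^{J_1,\dots,J_m}_{a_1,\dots,a_m}$ to the corresponding generator of $\mathsf{DPois}_{\infty}$ when all the $J_i$ are non-empty and to $0$ otherwise; one can also verify directly that this is a chain map by comparing with the differential formula of Subsection~\ref{subsec: dual cobar}, where precisely the summands producing a vertex with an empty labeling tuple are the ones that die. The only genuinely non-formal input is the identification $\bib^{!}\cong\mathsf{uBIB}_{>0}^{!}$ established in the preceding sections (i.e.\ Theorems~\ref{prop: iso uBIB and uDPois} and~\ref{prop: iota inclusion}), together with the known combinatorics of $\mathsf{DPois}^{!}\subset\mathsf{uDPois}^{!}$ from \cite{Leray-Vallette}; once these are in hand there is no real obstacle left, and the subtlety is merely bookkeeping—checking that $\mathsf{DPois}^{!}$ lands in the part of positive input arity and is closed under the composition inherited from $\mathsf{uBIB}^{!}$.
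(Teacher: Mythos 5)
Your proposal is correct and follows essentially the same route as the paper: the inclusion $\mathsf{DPois}^{!}\subset\bib^{!}$ obtained from the identification $\mathsf{uDPois}^{!}\cong\mathsf{uBIB}^{!}$ (with $\bib^{!}\cong\mathsf{uBIB}^{!}_{>0}$), dualized to a surjection of coproperads $\bib^{\antishrieck}\twoheadrightarrow\mathsf{DPois}^{\antishrieck}$ killing exactly the words with some $J_i=\emptyset$, and then the cobar construction applied functorially. Your write-up merely spells out the finite-dimensionality and the surjectivity of $\mathcal{G}(s^{-1}\overline{(-)})$, which the paper leaves implicit.
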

	\begin{proof}
	This follows from the bottom inclusion  in the commutative diagram of properads. 
	\begin{equation*}
			\begin{tikzpicture}
			\matrix[row sep=-.5mm,column sep=3mm,ampersand replacement=\&,baseline=0pt]
			{
				\node (-10) {$\mathsf{uDPois}^!\cong \mathsf{uBIB}^!$};	\\	%
				\node (00) {$\cup \hspace{1cm} \cup$};	\\		%
				\node (00) {$\mathsf{DPois}^{!} \subset \bib^!$};	\\		%
			}; 
		\end{tikzpicture}
	\end{equation*}
	By duality, one obtains a surjective morphism of coproperads $\bib^\antishrieck \twoheadrightarrow \mathsf{DPois}^{\antishrieck}$. 
	It sends to zero any word $\widetilde{rBW}^{J_1,\dots,J_m}_{a_1,\dots,a_m}$ with at least one $J_i=\emptyset$, and it is the identity on the other words. 
\end{proof}

\section{Comparison with  pre-CY algebras}\label{sec: pCY}

	Recall that $\mathsf{pCY}$ denotes the properad that governs the pre-CY algebras of dimension $2$. 

	In \cite[Proposition 3.2]{Leray-Vallette}, the properad $\mathsf{pCY}$ is described as the cobar construction of the partial coproperad 
	\begin{equation*}
	\mathsf{C_{pCY}} := \left( \mathsf{uDPois}^!/\mathsf{E_{uDP^!}}(1,0)_{1} \right)^*. 
	\end{equation*}
	From the Section \ref{sec: curved}, we know that $\mathsf{uDPois}^!\cong  \mathsf{uBIB}^!$, from which one may infer the isomorphism  
	\begin{equation*}
		\mathsf{C_{pCY}} 
		\cong  	 
		\left( \mathsf{uBIB}^!/\mathsf{E_{uBIB^!}}(1,0)_{1} \right)^* .
	\end{equation*}
	With this description, the following result is straightforward. 
	
	\begin{prop}\label{cor: surj pCY cob bib}
		There is a surjection of properads 
		\begin{equation*}
			\mathsf{pCY}  \twoheadrightarrow \cobd{\bib^{\textnormal{\textexclamdown}} } .  
		\end{equation*}
	\end{prop}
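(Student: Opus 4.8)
The plan is to produce the surjection as the cobar construction of a surjection of partial coproperads, exactly paralleling the strategy used for Proposition \ref{cor: surj cob bib dp}. First I would recall from Section \ref{sec: curved} the isomorphism $\mathsf{uDPois}^!\cong \mathsf{uBIB}^!$ of Theorem \ref{prop: iso uBIB and uDPois}, which is the identity on the generators $\delta$ and $\mu$ and on the cogenerator $\varepsilon\in \mathsf{E_{uBIB^!}}(1,0)_1$ (the black univalent vertex). In particular this isomorphism carries the sub-$\Sigma$-bimodule $\mathsf{E_{uDP^!}}(1,0)_1$ onto $\mathsf{E_{uBIB^!}}(1,0)_1$, hence it descends to an isomorphism of the quotient $\Sigma$-bimodules $\mathsf{uDPois}^!/\mathsf{E_{uDP^!}}(1,0)_1 \cong \mathsf{uBIB}^!/\mathsf{E_{uBIB^!}}(1,0)_1$, and dually to the stated isomorphism $\mathsf{C_{pCY}}\cong \left(\mathsf{uBIB}^!/\mathsf{E_{uBIB^!}}(1,0)_1\right)^*$ of partial coproperads (one must check that the quotient is a sub-partial-(co)properad, which follows since the compositions producing $(1,0)$-elements land in the ideal generated by $\varepsilon$; this is implicit in \cite{Leray-Vallette} and in the combinatorial description of $\mathsf{uBIB}^!$ in Section \ref{sec: cobar}).

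Next I would observe that, by Theorem \ref{prop: iota inclusion}, $\bib^!$ embeds into $\mathsf{uBIB}^!$ as the sub-properad $\mathsf{uBIB}^!_{>0}$ (the components with $n>0$), and that this sub-properad is disjoint from $\mathsf{E_{uBIB^!}}(1,0)_1$ which lives in bidegree $(1,0)$. Hence the composite $\bib^!=\mathsf{uBIB}^!_{>0}\hookrightarrow \mathsf{uBIB}^!\twoheadrightarrow \mathsf{uBIB}^!/\mathsf{E_{uBIB^!}}(1,0)_1$ is again an inclusion of the sub-$\Sigma$-bimodule consisting of the components with $n>0$; in other words $\bib^!$ sits inside $\mathsf{uBIB}^!/\mathsf{E_{uBIB^!}}(1,0)_1$ as those components. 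Dualizing this inclusion of partial coproperads yields a surjection of partial coproperads
\begin{equation*}
	\mathsf{C_{pCY}} \cong \left(\mathsf{uBIB}^!/\mathsf{E_{uBIB^!}}(1,0)_1\right)^* \twoheadrightarrow (\bib^!)^* = \bib^{\antishrieck},
\end{equation*}
where I use that $\bib^{\antishrieck}$ is the linear dual of $\bib^!$ (as recalled in Section \ref{sec: conv}) and that, since $\bib^!$ is a dioperad by the Corollary following Theorem \ref{prop: iota inclusion}, its dual carries the partial codioperad structure whose cobar construction is $\cobd{\bib^{\antishrieck}}$ as described in Section \ref{subsec: dual cobar}.

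Finally I would invoke functoriality of the cobar construction for partial coproperads: applying $\cobd{-}$ to the surjection $\mathsf{C_{pCY}}\twoheadrightarrow \bib^{\antishrieck}$ gives a morphism of quasi-free properads $\mathsf{pCY}=\cobd{\mathsf{C_{pCY}}}\to \cobd{\bib^{\antishrieck}}$, which is surjective because it is surjective on generators (it is induced by the surjection of generating $\Sigma$-bimodules $s^{-1}\mathsf{C_{pCY}}\twoheadrightarrow s^{-1}\bib^{\antishrieck}$, sending the dual of a cyclic word with some empty corolla block $J_i=\emptyset$ to zero and being the identity on the remaining ones). This is precisely the asserted surjection $\mathsf{pCY}\twoheadrightarrow \cobd{\bib^{\antishrieck}}$. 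I do not expect any serious obstacle here: every ingredient is already in place, and the only point requiring a line of care is that passing to the quotient by $\mathsf{E_{uBIB^!}}(1,0)_1$ is compatible with the partial (co)properad structures and with the inclusion $\iota$ of Theorem \ref{prop: iota inclusion} — which holds for the trivial reason that $\bib^!$ has no elements in bidegree $(1,0)$, so the inclusion and the quotient interact transparently.
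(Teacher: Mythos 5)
Your proposal follows the same route as the paper: identify $\mathsf{C_{pCY}}$ with $\left(\mathsf{uBIB}^!/\mathsf{E_{uBIB^!}}(1,0)_{1}\right)^*$ via Theorem \ref{prop: iso uBIB and uDPois}, use Theorem \ref{prop: iota inclusion} to see $\bib^!\subset \mathsf{uBIB}^!/\mathsf{E_{uBIB^!}}(1,0)_{1}$ (which is painless since $\bib^!$ has no $(m,0)$ components), dualize to a surjection of partial coproperads, and apply the cobar construction; this is exactly the paper's argument, and the structural part of your proof is sound.

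One concrete error, though: your parenthetical description of the map on generators is backwards. You say the surjection kills the dual of any cyclic word having \emph{some} empty block $J_i=\emptyset$ and is the identity on the rest; that is the description of the other surjection, $\cobd{\bib^{\antishrieck}}\twoheadrightarrow \cobd{\mathsf{DPois}^{\antishrieck}}$ of Proposition \ref{cor: surj cob bib dp}, whose target only retains words with all $J_i\neq\emptyset$. For the map $\mathsf{C_{pCY}}\twoheadrightarrow \bib^{\antishrieck}$ dual to $\iota$, the correct statement is that only the words with \emph{all} $J_i=\emptyset$ (the arity-$(m,0)$ words surviving in the quotient, $m\geq 2$) are sent to zero, while every word with at least one nonempty block --- including those with some empty blocks --- is sent identically, since such words lie in $\bib^!=\mathsf{uBIB}^!_{>0}$. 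Taken literally, the map you describe is not surjective onto $\bib^{\antishrieck}$, so the parenthetical contradicts the conclusion you draw from it; the fix is harmless because surjectivity really follows from the injectivity of $\iota$ (duals of injections of $\Sigma$--bimodules with finite-dimensional components are surjections), but the description should be corrected.
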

	\begin{proof}
		It is enough to show the inclusion of properads $\bib^! \subset  \mathsf{uBIB}^!/\mathsf{E_{uBIB^!}}(1,0)_{1}$, which is clear from the work of the previous section. 
	The above surjection sends identically any reduced cyclic word $\widetilde{rBW}^{J_1,\dots,J_m}_{a_1,\dots,a_m}$ with at least one  $J_i\neq\emptyset$, and projects the reduced cyclic words $\widetilde{rBW}^{\emptyset,\dots,\emptyset}_{a_1,\dots,a_m}$ to zero. 
		\end{proof}

	\section{Relationship with odd Lie bialgebras}\label{sec: LieB}
	
	The Koszul dual properad of $\mathsf{LieB}:=\mathsf{LieB}^{-1}$ is given by $\mathcal{G}(s^{-1}\E^*)/(s^{-2}\mathsf{R}^{\perp})$, where $s^{-1}\mathsf{E}^*$ is generated by 
	\begin{equation}
		(s^{-1}\mathsf{E}^*(1,2))_{-1}
		= \vspan \left\langle 
		\begin{tikzpicture}
			[scale=.35,auto=left,baseline=0.35cm]  
			\node (in1) at (1,0) {};
			\node (n2) at (1,1) [colie] {};
			\node[above,yshift=-0.1cm] (out1) at (0,2) {\scriptsize{$1$}};
			\node[above,yshift=-0.1cm] (out2) at (2,2) {\scriptsize{$2$}};
			
			\foreach \from/\to in {in1/n2,n2/out1,n2/out2}
			\draw[gray!20!black] (\from) -- (\to);
		\end{tikzpicture}
		=	-
		\begin{tikzpicture}
			[scale=.35,auto=left,baseline=0.35cm]  
			\node (in1) at (1,0) {};
			\node (n2) at (1,1) [colie] {};
			\node[above,yshift=-0.1cm] (out1) at (0,2) {\scriptsize{$2$}};
			\node[above,yshift=-0.1cm] (out2) at (2,2) {\scriptsize{$1$}};
			
			\foreach \from/\to in {in1/n2,n2/out1,n2/out2}
			\draw[gray!20!black] (\from) -- (\to);
		\end{tikzpicture}
		\right\rangle
		\text{ and } 
			(s^{-1}\mathsf{E}^*(2,1))_{0}
		= \vspan \left\langle 
		\begin{tikzpicture}
			[scale=.35,auto=left,baseline=0.15cm]  
			\node[below,yshift=0.1cm] (in1) at (0,0) {\scriptsize{$1$}};
			\node[below,yshift=0.1cm] (in2) at (2,0) {\scriptsize{$2$}};
			\node (n1) at (1,1) [prelie] {};
			\node (out1) at (1,2)  {};
			
			\foreach \from/\to in {in1/n1,in2/n1,n1/out1}
			\draw[gray!20!black] (\from) -- (\to);
		\end{tikzpicture}
		=
		\begin{tikzpicture}
			[scale=.35,auto=left,baseline=0.15cm]  
			\node[below,yshift=0.1cm] (in1) at (0,0) {\scriptsize{$2$}};
			\node[below,yshift=0.1cm] (in2) at (2,0) {\scriptsize{$1$}};
			\node (n1) at (1,1) [prelie] {};
			\node (out1) at (1,2)  {};
			
			\foreach \from/\to in {in1/n1,in2/n1,n1/out1}
			\draw[gray!20!black] (\from) -- (\to);
		\end{tikzpicture}
		\right\rangle
	\end{equation}
	and  $s^{-2}\mathsf{R}^{\perp}$ is generated by 
	\begin{equation}\label{eq: rel Liebi!}
		\begin{tikzpicture}
			[decoration={
				markings,
				mark=at position 0.6 with {\arrow{>}}},
			>=stealth,gray!20!black,,scale=.3,auto=left,baseline=-0.45cm]  
			\begin{scope}[yscale=-1,xscale=1]
				\node[below,yshift=0.1cm] (in1) at (0,-0.5) {\scriptsize{$1$}};
				\node[below,yshift=0.1cm] (in2) at (1,-0.5) {\scriptsize{$2$}};
				\node[below,yshift=0.1cm] (in3) at (3,-0.5) {\scriptsize{$3$}};
				\coordinate (itin1) at (0,1) {};
				\node (n2) at (2,1.5) [colie] {};
				\node (n1) at (1,2.5) [colie] {};;
				\coordinate (out2) at (1,3)  {};
				
				\foreach \from/\to in {in1/itin1,itin1/n1,in2/n2,n2/n1,in3/n2,out2/n1}
				\draw[gray!20!black] (\from) -- (\to);
			\end{scope}
		\end{tikzpicture}
		-
		\begin{tikzpicture}
			[decoration={
				markings,
				mark=at position 0.6 with {\arrow{>}}},
			>=stealth,gray!20!black,,scale=.3,auto=left,baseline=-0.45cm]  
			\begin{scope}[yscale=-1,xscale=1]
				\node[below,yshift=0.1cm] (in1) at (0,-0.5) {\scriptsize{$3$}};
				\node[below,yshift=0.1cm] (in2) at (1,-0.5) {\scriptsize{$1$}};
				\node[below,yshift=0.1cm] (in3) at (3,-0.5) {\scriptsize{$2$}};
				\coordinate (itin1) at (0,1) {};
				\node (n2) at (2,1.5) [colie] {};
				\node (n1) at (1,2.5) [colie] {};;
				\coordinate (out2) at (1,3)  {};
				
				\foreach \from/\to in {in1/itin1,itin1/n1,in2/n2,n2/n1,in3/n2,out2/n1}
				\draw[gray!20!black] (\from) -- (\to);
			\end{scope}
		\end{tikzpicture}
		;
		\hspace{0.5cm}
		\begin{tikzpicture}
			[decoration={
				markings,
				mark=at position 0.6 with {\arrow{>}}},
			>=stealth,gray!20!black,,scale=.3,auto=left,baseline=0.45cm]  
			\node (in1)[above,yshift=-0.1cm] at (0,-0.5) {\scriptsize{$1$}};
			\node (in2)[above,yshift=-0.1cm] at (1,-0.5) {\scriptsize{$2$}};
			\node (in3)[above,yshift=-0.1cm] at (3,-0.5) {\scriptsize{$3$}};
			\coordinate (itin1) at (0,1) {};
			\node (n2) at (2,1.5) [prelie] {};
			\node (n1) at (1,2.5) [prelie] {};;
			\coordinate (out2) at (1,3)  {};
			
			\foreach \from/\to in {in1/itin1,itin1/n1,in2/n2,n2/n1,in3/n2,out2/n1}
			\draw[gray!20!black] (\from) -- (\to);
		\end{tikzpicture}
		-
		\begin{tikzpicture}
			[decoration={
				markings,
				mark=at position 0.6 with {\arrow{>}}},
			>=stealth,gray!20!black,,scale=.3,auto=left,baseline=0.45cm]  
			\node (in1)[above,yshift=-0.1cm] at (0,-0.5) {\scriptsize{$3$}};
			\node (in2)[above,yshift=-0.1cm] at (1,-0.5) {\scriptsize{$1$}};
			\node (in3)[above,yshift=-0.1cm] at (3,-0.5) {\scriptsize{$2$}};
			\coordinate (itin1) at (0,1) {};
			\node (n2) at (2,1.5) [prelie] {};
			\node (n1) at (1,2.5) [prelie] {};;
			\coordinate (out2) at (1,3)  {};
			
			\foreach \from/\to in {in1/itin1,itin1/n1,in2/n2,n2/n1,in3/n2,out2/n1}
			\draw[gray!20!black] (\from) -- (\to);
		\end{tikzpicture};
		\hspace{0.5cm}
		\begin{tikzpicture}
			[scale=.3,auto=left,baseline=0.45cm]  
			\node[below,yshift=0.1cm] (in1) at (0,0) {\scriptsize{$1$}};
			\node[below,yshift=0.1cm] (in2) at (2,0) {\scriptsize{$2$}};
			\node (n1) at (1,1.25) [prelie] {};;
			\node (n2) at (1,2.25) [colie] {};
			\node[above,yshift=-0.1cm] (out1) at (0,3.5)  {\scriptsize{$1$}};
			\node[above,yshift=-0.1cm] (out2) at (2,3.5)  {\scriptsize{$2$}};
			
			\foreach \from/\to in {in1/n1,in2/n1,n1/n2,n2/out1,n2/out2}
			\draw[gray!20!black] (\from) -- (\to);
		\end{tikzpicture}
		-
		\begin{tikzpicture}
			[scale=.3,auto=left,baseline=0.45cm]  
			\node[below,yshift=0.1cm] (in1) at (1,0) {\scriptsize{$1$}};
			\node[below,yshift=0.1cm] (in2) at (3,0) {\scriptsize{$2$}};
			\coordinate (itin2) at (3,1) {};
			\node (n1) at (1,1) [colie] {};
			\node (n2) at (2,2.5) [prelie] {};;
			\coordinate (itout1) at (0,2.5)  {};
			\node[above,yshift=-0.1cm] (out1) at (0,3.5)  {\scriptsize{$1$}};
			\node[above,yshift=-0.1cm] (out2) at (2,3.5)  {\scriptsize{$2$}};
			
			\foreach \from/\to in {in1/n1,n1/n2,n1/itout1,itout1/out1,n2/out2,in2/itin2,itin2/n2}
			\draw[gray!20!black] (\from) -- (\to);
		\end{tikzpicture}
		\hspace{.5cm}\text{ and }\hspace{.5cm}
		\begin{tikzpicture}
			[scale=.3,auto=left,baseline=0.45cm]  
			\node[below,yshift=0.1cm] (in1) at (0,0) {\scriptsize{$1$}};
			\node[below,yshift=0.1cm] (in2) at (2,0) {\scriptsize{$2$}};
			\node (n1) at (1,1.25) [prelie] {};;
			\node (n2) at (1,2.25) [colie] {};
			\node[above,yshift=-0.1cm] (out1) at (0,3.5)  {\scriptsize{$1$}};
			\node[above,yshift=-0.1cm] (out2) at (2,3.5)  {\scriptsize{$2$}};
			
			\foreach \from/\to in {in1/n1,in2/n1,n1/n2,n2/out1,n2/out2}
			\draw[gray!20!black] (\from) -- (\to);
		\end{tikzpicture}
		-
		\begin{tikzpicture}
			[scale=.3,auto=left,baseline=0.45cm] 
			\node[below,yshift=0.1cm] (in1) at (0,0) {\scriptsize{$1$}};
			\node[below,yshift=0.1cm] (in2) at (2,0) {\scriptsize{$2$}}; 
			\coordinate (itin1) at (0,1) {};
			\node (n2) at (2,1) [colie] {};
			\node (n1) at (1,2.5) [prelie] {};;
			\coordinate (itout2) at (3,2.5)  {};
			\node[above,yshift=-0.1cm] (out1) at (1,3.5)  {\scriptsize{$1$}};
			\node[above,yshift=-0.1cm] (out2) at (3,3.5)  {\scriptsize{$2$}};
			
			\foreach \from/\to in {in1/itin1,itin1/n1,n1/out1,in2/n2,n2/n1,n2/itout2,itout2/out2}
			\draw[gray!20!black] (\from) -- (\to);
		\end{tikzpicture}. 
	\end{equation}

\begin{rem}
	The properad $\mathsf{LieB}^!$	 governs the \emph{involutive Frobenius algebras} with a degree $-1$  product. 
	The involutivity, which corresponds to the triviality of the genus one operation obtained by composing the product with the coproduct, follows from the anti-commutativity and commutativity of the product and coproduct, respectively. 
	The last two relations of \eqref{eq: rel Liebi!} corresponds to the Frobenius relations. 
\end{rem}

\begin{prop}
	The map $f\co \mathcal{G}(\mathsf{E_{BIB^!}}) \to \mathsf{LieB}^!$ given by 
\begin{equation*}
	f\left(
	\begin{tikzpicture}
		[scale=.35,auto=left,baseline=0.35cm]  
		\node (in1) at (1,0) {};
		\node (n2) at (1,1) [colie] {};
		\node[above,yshift=-0.1cm] (out1) at (0,2) {\scriptsize{$1$}};
		\node[above,yshift=-0.1cm] (out2) at (2,2) {\scriptsize{$2$}};
		
		\foreach \from/\to in {in1/n2,n2/out1,n2/out2}
		\draw[gray!20!black] (\from) -- (\to);
	\end{tikzpicture}
	\right)= 
	\begin{tikzpicture}
		[scale=.35,auto=left,baseline=0.35cm]  
		\node (in1) at (1,0) {};
		\node (n2) at (1,1) [colie] {};
		\node[above,yshift=-0.1cm] (out1) at (0,2) {\scriptsize{$1$}};
		\node[above,yshift=-0.1cm] (out2) at (2,2) {\scriptsize{$2$}};
		
		\foreach \from/\to in {in1/n2,n2/out1,n2/out2}
		\draw[gray!20!black] (\from) -- (\to);
	\end{tikzpicture}
	\text{ and } 
	f\left(
	\begin{tikzpicture}
		[scale=.35,auto=left,baseline=0.15cm]  
		\node[below,yshift=0.1cm] (in1) at (0,0) {\scriptsize{$1$}};
		\node[below,yshift=0.1cm] (in2) at (2,0) {\scriptsize{$2$}};
		\node (n1) at (1,1) [prelie] {};
		\node (out1) at (1,2)  {};
		
		\foreach \from/\to in {in1/n1,in2/n1,n1/out1}
		\draw[gray!20!black] (\from) -- (\to);
	\end{tikzpicture}
	\right)= 
	\begin{tikzpicture}
	[scale=.35,auto=left,baseline=0.15cm]  
	\node[below,yshift=0.1cm] (in1) at (0,0) {\scriptsize{$1$}};
	\node[below,yshift=0.1cm] (in2) at (2,0) {\scriptsize{$2$}};
	\node (n1) at (1,1) [prelie] {};
	\node (out1) at (1,2)  {};
	
	\foreach \from/\to in {in1/n1,in2/n1,n1/out1}
	\draw[gray!20!black] (\from) -- (\to);
\end{tikzpicture}
\end{equation*}
 induces a morphism of properads	$\bib^! \to \mathsf{LieB}^!$.
\end{prop}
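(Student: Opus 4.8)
The plan is to check that the quadratic map $f$ sends each generator of $\mathsf{R_{BIB^!}}$ to an element of the ideal $(s^{-2}\mathsf{R}^\perp)$ in $\mathsf{LieB}^!$, so that $f$ descends to the quotient properads. Since both $\bib^!$ and $\mathsf{LieB}^!$ are quadratic and $f$ is defined on generators by the identity on $\begin{tikzpicture}[scale=.3,auto=left,baseline=0.1cm]\node (in1) at (1,0) {};\node (n2) at (1,1) [colie] {};\node (out1) at (0,2) {};\node (out2) at (2,2) {};\foreach \from/\to in {in1/n2,n2/out1,n2/out2}\draw[gray!20!black] (\from) -- (\to);\end{tikzpicture}$ and $\begin{tikzpicture}[scale=.3,auto=left,baseline=0.1cm]\node (in1) at (0,0) {};\node (in2) at (2,0) {};\node (n1) at (1,1) [prelie] {};\node (out1) at (1,2)  {};\foreach \from/\to in {in1/n1,in2/n1,n1/out1}\draw[gray!20!black] (\from) -- (\to);\end{tikzpicture}$, it suffices to verify this on the five families of relations listed in Proposition giving the quadratic presentation of $\bib^!$: the associativity of the crossed generator, the coassociativity of the blue generator, the two ``mixed Frobenius-type'' relations of \eqref{rel Koszul 1}, and the two ``balanced-dual'' relations that are the first two elements of \eqref{rel Koszul 2} (together with the consequence \eqref{rel Koszul 2b} and the two genus-one relations). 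First I would note that the associativity relation of the crossed vertex maps verbatim to the second relation of \eqref{eq: rel Liebi!}, and the coassociativity relation of the blue vertex maps verbatim to the first relation of \eqref{eq: rel Liebi!}; these are immediate. Next, the two mixed relations in \eqref{rel Koszul 1} (with $\degd=-1$, so the sign $(-1)^\degd$ becomes $-1$) map to $\Sigma_2^{\textnormal{op}}\times\Sigma_2$--translates of the two Frobenius relations, i.e. the last two relations of \eqref{eq: rel Liebi!}; here one only has to match the planar shapes and check the signs, using that in $\mathsf{LieB}^!$ the crossed generator is symmetric and the blue generator is anti-symmetric.

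The genuinely new point, and the one I expect to be the only real obstacle, is the image of the two balanced-dual relations, the first two elements of \eqref{rel Koszul 2}. These relations are \emph{not} relations of $\mathsf{LieB}^!$ as written — they are genuinely ``extra'' in $\bib^!$ — so one must show that $f$ sends them into the \emph{ideal} $(s^{-2}\mathsf{R}^\perp)$ of $\mathsf{LieB}^!$, not merely into the space of quadratic relations. Concretely, I would take the first relation of \eqref{rel Koszul 2}, apply $f$, and then simplify the resulting graph in $\mathsf{LieB}^!$ using the symmetry of the crossed vertex, the anti-symmetry of the blue vertex, and the Frobenius relations; the expected outcome is that the two terms become equal (up to the correct sign) in $\mathsf{LieB}^!$ precisely because the Frobenius relations force the genus-zero ``mixed'' tree with a prescribed output permutation to be independent of whether the blue vertex sits above or below the crossed vertex on that leg. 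Similarly the second relation of \eqref{rel Koszul 2}, after applying $f$, should collapse using commutativity of the blue vertex. I would also separately check that the two genus-one relations (the last two elements of \eqref{rel Koszul 2}) map into the ideal: by the remark following the Proposition, $\mathsf{LieB}^!$ governs involutive Frobenius algebras, so the genus-one operation obtained by composing the anti-symmetric product with the (anti)symmetric coproduct already vanishes in $\mathsf{LieB}^!$, which kills both of those images.

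Having verified that every generator of $\mathsf{R_{BIB^!}}$ maps into $(s^{-2}\mathsf{R}^\perp)$, the universal property of the quotient $\bib^! = \mathcal{G}(\mathsf{E_{BIB^!}})/(\mathsf{R_{BIB^!}})$ gives the induced morphism of properads $\bib^! \to \mathsf{LieB}^!$, completing the proof. The only careful bookkeeping is the sign tracking in the Frobenius-type computations, for which I would fix once and for all the conventions (Koszul sign rule, $\lambda=-1$ so the blue generator has degree $0$ and the crossed generator degree $-1$) and use the level-tree pictures as in the earlier lemmata; I do not anticipate any conceptual difficulty beyond that. It is worth remarking, as motivation, that this $f$ is dual to the anti-symmetrization morphism $\mathsf{LieB}\to\bib$ of Remark \ref{rem: Liebi bib} at the level of Koszul duals, which is why one should expect it to exist in the first place.
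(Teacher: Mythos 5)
Your proposal is correct and amounts to exactly what the paper's one-word proof (``Straightforward'') leaves to the reader: check that each generator of $\mathsf{R_{BIB^!}}$ is sent into the ideal of relations of $\mathsf{LieB}^!$, the (co)associativity relations going over directly, the mixed relations of \eqref{rel Koszul 1} and the first two relations of \eqref{rel Koszul 2} following from the Frobenius relations together with the (anti)symmetry of the generators, and the genus-one relations vanishing by involutivity. One small bookkeeping remark: in the paper's conventions for $\mathsf{LieB}^!$ it is the crossed (colie) generator that is anti-symmetric and the blue (prelie) generator that is symmetric (and the crossed-vertex relation matches the first, not the second, relation of \eqref{eq: rel Liebi!}); you state these the other way around, but this naming slip does not affect the validity of your argument.
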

\begin{proof}
	Straightforward. 
\end{proof}

Let us make explicit the induced morphism 
\begin{equation*}
\cobd{\mathsf{LieB}^{\textnormal{\textexclamdown}}}	\to \cobd{\bib^\antishrieck }. 
\end{equation*}

First, one observes that $\mathsf{LieB}^!(m,n)$ is generated by one element 
\begin{equation*}
	L^n_m = 
			\begin{tikzpicture}	
		[scale=.3,auto=left,baseline=0.75cm]  
		\begin{scope}[xscale=-1,yscale=-1,xshift=-4cm,yshift=-6cm]
			\node[above,yshift=-.1cm] (in-1) at (-2.75,-1) {\scriptsize{$n$}};
			\node[above,yshift=-.1cm] (in0) at (-.5,-1) {\scriptsize{$n-1$}};
			\node[above,xshift=0mm,yshift=-.2cm] (in2) at (2.5,.25) {\scriptsize{$2$}};
			\node[above,xshift=-1mm,yshift=-.1cm] (in3) at (4,.5) {\scriptsize{$1$}};
			
			\node (n3) at (-1.5,0) [colie] {};
			\node (n2) at (1,1.75) [colie] {};
			\node (n1) at (2,2.5) [colie] {};
			\coordinate (out2) at (2,3)  {};
			\foreach \from/\to in {in0/n3,in-1/n3,in3/n1,n2/n1,in2/n2,out2/n1}
			\draw[gray!20!black] (\from) -- (\to);
			\draw (n2) node {} -- (n3) [dashed] node {};

		\end{scope}
		\node[below,yshift=.1cm] (in-1) at (-2.75,-1) {\scriptsize{$1$}};
		\node[below,yshift=.1cm] (in0) at (-.5,-1) {\scriptsize{$2$}};
		\node[below,xshift=0mm,yshift=.2cm] (in2) at (2.5,.25) {\scriptsize{$m-1$}};
		\node[below,xshift=1mm,yshift=.1cm] (in3) at (4,.5) {\scriptsize{$m$}};
		
		\node (n3) at (-1.5,0) [prelie] {};
		\node (n2) at (1,1.75) [prelie] {};
		\node (n1) at (2,2.5) [prelie] {};
		\coordinate (out2) at (2,3)  {};
		
		\foreach \from/\to in {in0/n3,in-1/n3,in3/n1,n2/n1,in2/n2,out2/n1}
		\draw[gray!20!black] (\from) -- (\to);
		\draw (n2) node {} -- (n3) [dashed] node {};
	\end{tikzpicture}
\end{equation*}
which is anti-symmetric in the $n$ inputs and symmetric in the $m$ outputs; for instance see \cite{Merkulov2004}. 
The (anti-)symmetry results from the (anti-)symmetry and (anti-)associativity of the generators. 

Recall that $rBW_{1,\dots,m}^{(j_1),\dots,(j_m)}$ is the class of the tree \eqref{eq: image of word 2}, twisted by $(-1)^{n(0)}$. 
\begin{prop}
The morphism $f\co  \bib^! \to \mathsf{LieB}^!$ sends any word $rBW_{1,\dots,m}^{(j_1),\dots,(j_m)}$ with $j_m\neq 0$ to $(-1)^{ j_1+ \cdots +j_{m-1} + m-1} L^n_m$. 	
\end{prop}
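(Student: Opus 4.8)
The plan is to evaluate, inside $\mathsf{LieB}^!$, the level tree \eqref{eq: image of word 2} representing $rBW_{1,\dots,m}^{(j_1),\dots,(j_m)}$, using that $\mathsf{LieB}^!(m,n)$ is one-dimensional with generator $L^n_m$ (as recalled above), so that $f$ necessarily sends $rBW_{1,\dots,m}^{(j_1),\dots,(j_m)}$ to a scalar multiple of $L^n_m$ and only that scalar --- a sign, since every structure map of $\mathsf{LieB}^!$ is a (co)commutative and (co)associative operation --- has to be pinned down. First I would record that, $f$ being a morphism of properads sending the product generator of $\bib^!$ to the degree $0$ commutative product of $\mathsf{LieB}^!$ and the cobracket generator to the degree $-1$ anti-cocommutative Lie cobracket, applying $f$ to \eqref{eq: image of word 2} reproduces the very same planar level tree built from the $\mathsf{LieB}^!$-generators, multiplied by the global sign $(-1)^{n(0)}$ that enters the definition of $rBW$.

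Next I would straighten this tree. Each sub-corolla $C^{J_i}$ with $j_i\neq 0$ is, by definition, a comb of cobrackets on the leaves $\mu^1_i<\cdots<\mu^{j_i}_i$ in planar order; iterating the coassociativity relation (the first relation of \eqref{eq: rel Liebi!}) rewrites it, with no sign, as the standard corolla on those leaves. Each block $B_i$ of \eqref{eq: image of word 2} is a cobracket applied after a product; the two Frobenius relations (the last two relations of \eqref{eq: rel Liebi!}), being sign-free identities in $\mathsf{LieB}^!$, let one slide every product vertex below every cobracket vertex. After these moves $f(rBW_{1,\dots,m}^{(j_1),\dots,(j_m)})$ is $(-1)^{n(0)}$ times the genus zero composite given by the iterated product of the inputs $a_1,\dots,a_m$ followed by the iterated cobracket onto the $n$ outputs. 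Commutativity of the product lets one reorder the $m$ inputs to $1,\dots,m$ and straighten the product part to a single comb at no cost, and coassociativity straightens the cobracket part to a single comb; what remains is to reorder the $n$ outputs, which occur grouped as $J_1,J_2,\dots,J_m$ (each group already in increasing order and the groups themselves in a fixed order read off from the level structure of \eqref{eq: image of word 2}), into $1,2,\dots,n$. By anti-cocommutativity of the cobracket this costs precisely the sign of that block permutation.

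The last step is the sign count: one must check that the product of $(-1)^{n(0)}$ with the sign of the block permutation of $\{1,\dots,n\}$ that sorts $J_1,\dots,J_m$ back into standard order equals $(-1)^{j_1+\cdots+j_{m-1}+m-1}$. I would carry this out by induction on $m$: for $m=1$ the word $rBW_1^{(n)}$ is the single comb $C^{(n)}$, which maps to $L^n_1$, matching the formula (empty exponent); and passing from $m-1$ to $m$ adds one product vertex together with the comb $C^{J_m}$ and moves $J_m$ across the outputs $J_1,\dots,J_{m-1}$, producing exactly the prescribed change of sign. As a consistency check, one can note that $f$ intertwines the cyclic relation of Lemma \ref{lem: red well-defined} with the output anti-symmetry of $L^n_m$, which forces the claimed scalar to obey the corresponding recursion under cyclic rotation of $(j_1,\dots,j_m)$.

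The main obstacle is this sign bookkeeping: matching the planar conventions used for $C^{J_i}$ and for $L^n_m$, tracking the Koszul signs through the Frobenius straightening (they turn out trivial, since the cobracket sits in fixed degree $-1$, but this must be argued), and correctly identifying the order in which the groups $J_i$ appear in \eqref{eq: image of word 2} so that the block permutation --- and hence its sign --- is computed without error. Everything else is rigid because the relevant components of $\mathsf{LieB}^!$ are one-dimensional.
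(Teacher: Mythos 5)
Your overall strategy coincides with the paper's (apply $f$, use the Frobenius relations of \eqref{eq: rel Liebi!} to bring the level tree \eqref{eq: image of word 2} into the shape of $L^n_m$, then pin down the scalar using one-dimensionality of the target component), but the sign bookkeeping --- which is the entire content of the statement --- contains a genuine error. You assert that straightening the cobracket combs $C^{J_i}$ and the subsequent rearrangements are sign-free, so that the total sign is $(-1)^{n(0)}$ times the signature of an output block permutation. This is false: the cobracket generator of $\mathsf{LieB}^!$ has homological degree $-1$ and, being the image under $f$ of the generator subject to the first relation of \eqref{rel Koszul 1}, it is \emph{anti}-associative, so every re-association of a comb of cobracket vertices costs a sign. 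This is precisely where the paper's proof gets its answer: after the (indeed sign-free) Frobenius moves one is left with the shape of $L^n_m$ carrying the corollas $C^{j_s}$ on top, and sliding each of them into place by anti-associativity contributes $(-1)^{j_s-1}$; combined with the prefactor $(-1)^{n(0)}$ this yields $(-1)^{j_1+\cdots+j_{m-1}+m-1}$. (The claim that the Koszul signs ``turn out trivial because the cobracket sits in degree $-1$'' is exactly backwards: the odd degree is what creates them.)

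That your recipe cannot be repaired by a cleverer reading of the output order is visible on a small example: for $m=2$, $n=3$, the partitions $(j_1,j_2)=(1,2)$ and $(2,1)$ must go to $+L^3_2$ and $-L^3_2$ respectively, yet in your model both cases have $n(0)=0$ and the same output arrangement pattern (blocks $(j_k)$ are consecutive, so the sorting permutation is at most a block/within-block reversal, whose signature depends on the $j_i$ only through $\binom{n}{2}$), hence the same sign. For the same reason the induction on $m$ you sketch cannot close: the true sign increment from $m-1$ to $m$ is $(-1)^{j_{m-1}+1}$, independent of $j_m$, whereas the move you describe (adding $C^{J_m}$ and transporting $J_m$ across $J_1,\dots,J_{m-1}$) would produce a sign governed by $j_m$. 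The fix is to replace your sign-free straightening claim by the anti-associativity count above, i.e.\ essentially the paper's argument.
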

 \begin{proof}
 	If every $j_k=1$ then this is clear. Indeed, one may remark that the last two relations of \eqref{eq: rel Liebi!} allows us to obtain $L^n_m$, with no signs involved.  
 	In the general case, by only applying the aforementioned relations, one obtains a level tree with 
 	the shape of $L^n_m$ but with the corollas $C^{j_i}$ above:
 	\begin{equation*}
 		\begin{tikzpicture}	
 		[scale=.3,auto=left,baseline=0.45cm]  
 		\begin{scope}[xscale=-1,yscale=-1,xshift=-4cm,yshift=-6cm]
 			\node[above,yshift=-.1cm] (in-1) at (-2.75,-4) {\scriptsize{$C^{j_m}$}};
 			\node[above,yshift=-.1cm] (in0) at (-.5,-3) {\scriptsize{$C^{j_{m-1}}$}};
 			\node[above,xshift=0mm,yshift=-.2cm] (in2) at (2.5,-.75) {\scriptsize{$C^{j_2}$}};
 			\node[above,xshift=0mm,yshift=-.3cm] (in3) at (4,1) {\scriptsize{$C^{j_1}$}};
 			
 			\node (n3) at (-1.5,-2) [colie] {};
 			\node (n2) at (.5,.5) [colie] {};
 			\node (n1) at (2,2.5) [colie] {};
 			\coordinate (out2) at (2,3)  {};
 			\foreach \from/\to in {in0/n3,in-1/n3,in3/n1,n2/n1,in2/n2,out2/n1}
 			\draw[gray!20!black] (\from) -- (\to);
 			\draw (n2) node {} -- (n3) [dashed] node {};

 		\end{scope}
 		\node[below,yshift=.1cm] (in-1) at (-2.75,-1) {\scriptsize{$1$}};
 		\node[below,yshift=.1cm] (in0) at (-.5,-1) {\scriptsize{$2$}};
 		\node[below,xshift=0mm,yshift=.2cm] (in2) at (2.5,.25) {\scriptsize{$m-1$}};
 		\node[below,xshift=1mm,yshift=.1cm] (in3) at (4,.5) {\scriptsize{$m$}};
 		
 		\node (n3) at (-1.5,0) [prelie] {};
 		\node (n2) at (1,1.75) [prelie] {};
 		\node (n1) at (2,2.5) [prelie] {};
 		\coordinate (out2) at (2,3)  {};
 		
 		\foreach \from/\to in {in0/n3,in-1/n3,in3/n1,n2/n1,in2/n2,out2/n1}
 		\draw[gray!20!black] (\from) -- (\to);
 		\draw (n2) node {} -- (n3) [dashed] node {};
 	\end{tikzpicture}.  
 \end{equation*} 	
 	Here, if $j_k=0$, then both $C^{j_k}$ and the crossed vertex it is attached to are not present. 
 	For each $j_s>0$, we ``move"  $C^{j_s}$ by the anti-associativity relation for the crossed vertex. 
 	This produces the sign $(-1)^{j_s-1}$. 
 	We end up with the sign  $(-1)^{ j_1+ \cdots +j_{m-1} + m-1-n(0)}$, hence the result. 
 \end{proof}

\begin{cor}\label{cor: Liebi to bib KD}
The morphism $f^*\co \mathsf{LieB}^{\textnormal{\textexclamdown}}	\to \bib^\antishrieck$ sends the dual of $L^n_m$ to 
\begin{equation*}
	\sum_{[\sigma]\in \mfaktor{\Sigma^{\text{cyc}}_m}{\Sigma_m} } 
	\sum_{\substack{j_1+\cdots + j_m=n\\  j_m\neq 0}}
	\sum_{\tau\in \Sigma_n}
	\operatorname{sgn(\tau)} 
	(-1)^{ j_1+ \cdots +j_{m-1} + m-1}
	rBW_{\sigma(1),\dots,\sigma(m)}^{\tau\cdot j_1,\dots,\tau\cdot j_m}, 
\end{equation*}
where $\operatorname{sgn(\tau)}$ is the signature of the permutation $\tau$, and  $\tau\cdot j_k$ stands for the tuple $(\tau(j_{k-1}+1),\dots,\tau(j_k))$; the first sum is over the right cosets $[\sigma]=\Sigma_m^{\text{cyc}}\sigma$. 	
\end{cor}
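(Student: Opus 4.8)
The plan is to dualize the previous proposition, which computes $f\colon \bib^! \to \mathsf{LieB}^!$ on the basis of reduced cyclic words, and to transport this information through the various (co)bar constructions. First I would recall that, by functoriality of the cobar construction, the morphism of quadratic data underlying $f$ induces a morphism of Koszul dual coproperads $f^*\colon \mathsf{LieB}^{\antishrieck} \to \bib^{\antishrieck}$, which on the level of the generating $\Sigma$--bimodules is simply the linear dual of $f$. Since $\mathsf{LieB}^!(m,n)$ is one-dimensional, spanned by $L^n_m$, the coproperad $\mathsf{LieB}^{\antishrieck}(m,n)$ is spanned by the dual element $\widetilde{L}^n_m$, and the task reduces to expressing $f^*(\widetilde{L}^n_m)$ in the basis $\{\widetilde{rBW}^{J_1,\dots,J_m}_{a_1,\dots,a_m}\}$ of $(\bib^!(m,n))^*$.

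The key computation is the pairing: $f^*(\widetilde{L}^n_m)$ is characterized by $\langle f^*(\widetilde{L}^n_m), rBW^{J_1,\dots,J_m}_{a_1,\dots,a_m}\rangle = \langle \widetilde{L}^n_m, f(rBW^{J_1,\dots,J_m}_{a_1,\dots,a_m})\rangle$. By the preceding proposition, when $J_m\neq\emptyset$ and $J_i = \tau\cdot j_i$ for a suitable permutation $\tau$, one has $f(rBW^{(j_1),\dots,(j_m)}_{1,\dots,m}) = (-1)^{j_1+\cdots+j_{m-1}+m-1}L^n_m$, so the coefficient of $\widetilde{rBW}^{(j_1),\dots,(j_m)}_{1,\dots,m}$ is exactly that sign. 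It remains to account for three sources of multiplicity: (i) the $\Sigma_m^{\text{op}}$-action on outputs, which, since $L^n_m$ is symmetric in its outputs, contributes a symmetrization over the labels $\sigma(1),\dots,\sigma(m)$ — but the cyclic relation from Lemma~\ref{lem: red well-defined} already identifies cyclic rotations of reduced words, so the honest index set is the right cosets $\Sigma_m^{\text{cyc}}\backslash\Sigma_m$; (ii) the $\Sigma_n$-action on inputs, which, since $L^n_m$ is \emph{anti}-symmetric in its inputs, contributes the signature $\operatorname{sgn}(\tau)$ on the word $rBW^{\tau\cdot j_1,\dots,\tau\cdot j_m}$; (iii) the sum over all partitions $j_1+\cdots+j_m = n$ with $j_m\neq 0$, reflecting that every reduced cyclic word with $J_m\neq\emptyset$ contributes. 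Collecting these gives precisely the stated formula. I would also note that words with $J_m = \emptyset$ need not be treated separately: any reduced word with some $J_i\neq\emptyset$ has, by the cyclic relation, a representative with $J_m\neq\emptyset$, while the words $rBW^{\emptyset,\dots,\emptyset}$ lie in the image of $\iota$ restricted to arity $(m,0)$, which is trivial in $\bib^!$ — so they do not appear.

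The main obstacle I anticipate is a \emph{careful bookkeeping of signs and of which orbit representatives are counted}. Two subtleties deserve attention. First, one must verify that the sign $(-1)^{j_1+\cdots+j_{m-1}+m-1}$ is genuinely well-defined on the coset $[\sigma]\in\Sigma_m^{\text{cyc}}\backslash\Sigma_m$ together with the partition data, i.e.\ that rotating a reduced cyclic word via \eqref{eq: cyclic relation} changes the representative $rBW$ and the sign $(-1)^{j_1+\cdots+j_{m-1}+m-1}$ in a compatible way — this is exactly the content of the sign computation in the proof of Lemma~\ref{lem: red well-defined}, so it must be invoked rather than re-derived. Second, one must confirm that the interplay between the input anti-symmetry (signature $\operatorname{sgn}(\tau)$) and the output symmetry does not introduce spurious cancellations: since distinct cosets $[\sigma]$ label genuinely distinct basis words and, for fixed $[\sigma]$, distinct $\tau$ (modulo the block stabilizer permuting within each $\tau\cdot j_k$, which is absorbed by the $\operatorname{sgn}$ and the anti-symmetry of $C^{J_i}$) also give distinct basis words, the terms are linearly independent and no cancellation occurs. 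Modulo these verifications — all of which reduce to the sign lemmata already established — the formula follows by linear-algebraic duality, so the proof should be short.

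\begin{proof}
By functoriality of the cobar construction applied to the morphism of quadratic data underlying $f\colon \bib^!\to\mathsf{LieB}^!$, one obtains the morphism of Koszul dual coproperads $f^*\colon\mathsf{LieB}^{\antishrieck}\to\bib^{\antishrieck}$, whose component in arity $(m,n)$ is the linear dual of $f\colon\bib^!(m,n)\to\mathsf{LieB}^!(m,n)$. Since $\mathsf{LieB}^!(m,n)=\Bbbk\cdot L^n_m$, its dual is spanned by $\widetilde{L}^n_m$, and by definition of the dual map, for any reduced cyclic word one has
\begin{equation*}
	\big\langle f^*(\widetilde{L}^n_m),\, rBW^{J_1,\dots,J_m}_{a_1,\dots,a_m}\big\rangle
	= \big\langle \widetilde{L}^n_m,\, f\big(rBW^{J_1,\dots,J_m}_{a_1,\dots,a_m}\big)\big\rangle.
\end{equation*}
If some $J_i=\emptyset$, the word $rBW^{J_1,\dots,J_m}_{a_1,\dots,a_m}$ involves a corolla $C^{J_i}$ with no leaves, which under $\iota$ lands in the component $\bib^!(m,0)=0$ when all $J_i$ are empty; and if at least one $J_i\neq\emptyset$, the cyclic relation \eqref{eq: cyclic relation} (Lemma \ref{lem: red well-defined}) provides a representative with $J_m\neq\emptyset$. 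Hence it suffices to compute the coefficients of the words $rBW^{J_1,\dots,J_m}_{a_1,\dots,a_m}$ with $J_m\neq\emptyset$.

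Write $J_i=\tau\cdot j_i=(\tau(j_{i-1}+1),\dots,\tau(j_i))$ for a partition $j_1+\cdots+j_m=n$ with $j_m\neq 0$ and a permutation $\tau\in\Sigma_n$, and let $\sigma\in\Sigma_m$ encode the output labelling $a_i=\sigma(i)$. By the previous proposition, $f\big(rBW^{(j_1),\dots,(j_m)}_{1,\dots,m}\big)=(-1)^{j_1+\cdots+j_{m-1}+m-1}L^n_m$. Since $L^n_m$ is anti-symmetric in its inputs and symmetric in its outputs (see \cite{Merkulov2004}), re-labelling the inputs by $\tau$ contributes the factor $\operatorname{sgn}(\tau)$, while re-labelling the outputs by $\sigma$ leaves $L^n_m$ unchanged. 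Thus
\begin{equation*}
	\big\langle \widetilde{L}^n_m,\, f\big(rBW^{\tau\cdot j_1,\dots,\tau\cdot j_m}_{\sigma(1),\dots,\sigma(m)}\big)\big\rangle
	= \operatorname{sgn}(\tau)\,(-1)^{j_1+\cdots+j_{m-1}+m-1}.
\end{equation*}
Finally, the cyclic relation \eqref{eq: cyclic relation} identifies the basis words $rBW^{J_1,\dots,J_m}_{a_1,\dots,a_m}$ obtained from one another by a simultaneous cyclic rotation of the pairs $(J_i,a_i)$; a basis of $(\bib^!(m,n))^*$ is therefore indexed by the right cosets $[\sigma]\in\Sigma_m^{\text{cyc}}\backslash\Sigma_m$ together with the data $(j_1,\dots,j_m)$ with $j_m\neq 0$ and $\tau\in\Sigma_n$ (modulo the stabilizer of each block, which is absorbed by $\operatorname{sgn}(\tau)$ and the anti-symmetry of the corollas $C^{J_i}$). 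Summing the dual element over this basis with the coefficients just computed yields
\begin{equation*}
	f^*(\widetilde{L}^n_m)
	= \sum_{[\sigma]\in \mfaktor{\Sigma^{\text{cyc}}_m}{\Sigma_m}}
	\sum_{\substack{j_1+\cdots+j_m=n\\ j_m\neq 0}}
	\sum_{\tau\in\Sigma_n}
	\operatorname{sgn}(\tau)\,(-1)^{j_1+\cdots+j_{m-1}+m-1}\,
	rBW^{\tau\cdot j_1,\dots,\tau\cdot j_m}_{\sigma(1),\dots,\sigma(m)},
\end{equation*}
which is the asserted formula.
\end{proof}
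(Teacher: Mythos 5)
Your overall route is the intended one: the paper offers no separate argument for this corollary and clearly means it to follow from the preceding proposition by linear duality, and your pairing computation — one-dimensionality of $\mathsf{LieB}^!(m,n)$, equivariance of $f$, output-symmetry and input-anti-symmetry of $L^n_m$ yielding the coefficient $\operatorname{sgn}(\tau)\,(-1)^{j_1+\cdots+j_{m-1}+m-1}$ on each decorated word — is exactly the right computation and is correct as far as it goes.

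The gap is in your final assembling step, where you assert that the triples $([\sigma],(j_1,\dots,j_m),\tau)$ with $j_m\neq 0$ index a basis of $(\bib^!(m,n))^*$ ``modulo the stabilizer of each block, which is absorbed by $\operatorname{sgn}(\tau)$ and the anti-symmetry of the corollas $C^{J_i}$''. That justification is not correct: in $\bib^!$ the corollas $C^{J_i}$ carry no (anti)symmetry at all — the colie generator of $\bib^!$ spans a two-dimensional space, and anti-symmetry only appears after applying $f$, on the $\mathsf{LieB}^!$ side — so there is no stabilizer being ``absorbed''; distinct $\tau$ simply produce distinct ordered tuples $(J_1,\dots,J_m)$, hence distinct decorated words. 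What genuinely needs an argument, and is missing, is the enumeration itself: a cyclic class of reduced words admits several rotated representatives with nonempty last block (one for each nonempty $J_i$), and these rotations carry different output sequences, so one must verify that, for a fixed choice of representatives of the cosets $[\sigma]$, every reduced cyclic word class is reached exactly once by the index set, and that the coefficient $\operatorname{sgn}(\tau)\,(-1)^{j_1+\cdots+j_{m-1}+m-1}$ transforms consistently when one rewrites a class from one admissible rotation to another — this compatibility is precisely the sign statement proved in Lemma \ref{lem: red well-defined}, which your plan mentions but your proof never actually invokes. (A low-dimensional sanity check, e.g.\ $(m,n)=(2,1)$ where $\bib^!(2,1)$ is two-dimensional while there is a single coset, shows that this bookkeeping is where all the content lies.) Without carrying it out, the displayed identity is asserted rather than proved.
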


The resulting map $\mathbf{\Omega}(f^*)\co \cobd{\mathsf{LieB}^{\textnormal{\textexclamdown}}}	\to \cobd{\bib^\antishrieck }$  is the extension of the anti-symmetrization map $\mathsf{LieB}\to \bib$ described in Remark \ref{rem: Liebi bib}.

	\bibliographystyle{alpha}

\end{document}